\newenvironment{breakablealgorithm}
  {
   \begin{center}
     \refstepcounter{algorithm}
     \hrule height.8pt depth0pt \kern2pt
     \renewcommand{\caption}[2][\relax]{
       {\raggedright\textbf{\ALG@name~\thealgorithm} ##2\par}%
       \ifx\relax##1\relax 
         \addcontentsline{loa}{algorithm}{\protect\numberline{\thealgorithm}##2}%
       \else 
         \addcontentsline{loa}{algorithm}{\protect\numberline{\thealgorithm}##1}%
       \fi
       \kern2pt\hrule\kern2pt
     }
  }{
     \kern2pt\hrule\relax
   \end{center}
  }
\newtheorem{thm}{Theorem}[section]
\newtheorem{prop}[thm]{Proposition}
\newtheorem{proof}[thm]{Proof}
\newtheorem{rem}[thm]{Remark}
\newtheorem{example}[thm]{Example}
\newcommand{\beq}{\begin{equation}}
\newcommand{\eeq}{\end{equation}}
\numberwithin{equation}{section} \topmargin=-2.0cm \oddsidemargin=1cm
\definecolor{ngreen}{rgb}{0.16, 0.5, 0.0}
\begin{document}

\title{A study on CFL conditions for the DG solution of conservation laws on adaptive moving meshes}
\author{Min Zhang\footnote{School of Mathematical Sciences, Peking University, Beijing 100871, China.
E-mail: minzhang@math.pku.edu.cn.
},
~Weizhang Huang\footnote{Department of Mathematics, University of Kansas, Lawrence, Kansas 66045, USA. E-mail: whuang@ku.edu.
},
~and Jianxian Qiu\footnote{School of Mathematical Sciences and Fujian Provincial Key Laboratory of Mathematical Modeling and High-Performance Scientific Computing, Xiamen University, Xiamen, Fujian 361005, China.
E-mail: jxqiu@xmu.edu.cn.
}
}

\date{}
\maketitle
\begin{abstract}
The selection of time step plays a crucial role in improving stability and efficiency in the Discontinuous Galerkin (DG) solution of hyperbolic conservation laws on adaptive moving meshes that typically employs explicit stepping.
A commonly used selection of time step is a direct extension based on Courant-Friedrichs-Levy (CFL) conditions established for fixed and uniform meshes.
In this work, we provide a mathematical justification for those time step selection strategies used in practical adaptive DG computations.
A stability analysis is presented for a moving mesh DG method for linear scalar conservation laws.  Based on the analysis, a new selection strategy of the time step is proposed, which takes into consideration the coupling of the $\alpha$-function (that is related to the eigenvalues of the Jacobian matrix of the flux and the mesh movement velocity) and the heights of the mesh elements. The analysis also suggests several stable combinations of the choices of the $\alpha$-function in the numerical scheme and in the time step selection. Numerical results obtained with a moving mesh DG method for Burgers' and Euler equations are presented.
For comparison purpose, numerical results obtained with an error-based time step-size selection strategy are also given.
\end{abstract}

\vspace{5pt}

\noindent\textbf{The 2020 Mathematics Subject Classification:} 65M50, 65M60

\vspace{5pt}

\noindent\textbf{Keywords:} Discontinuous Galerkin method, adaptive mesh, moving mesh, CFL condition, stability

\newcommand{\h}{\hspace{1.cm}}
\newcommand{\hh}{\hspace{2.cm}}
\newtheorem{yl}{\hspace{1.cm}Lemma}
\newtheorem{dl}{\hspace{1.cm}Theorem}
\renewcommand{\sec}{\section*}
\renewcommand{\l}{\langle}
\renewcommand{\r}{\rangle}
\newcommand{\be}{\begin{eqnarray}}
\newcommand{\ee}{\end{eqnarray}}

\normalsize \vskip 0.2in
\newpage

\section{Introduction}

We are concerned with the stability of the discontinuous Galerkin (DG) solution of conservation laws
on adaptive moving meshes.
The DG method is a powerful numerical tool for use in the simulation of hyperbolic problems.
It was first used by Reed and Hill \cite{Reed-Hill-1973} for the steady radiation transport equation and studied theoretically by Lesaint and Raviart \cite{LeSaint-Raviart-1974}.
The method was extended to conservation laws by Cockburn and Shu (and their coworkers) in a series of papers \cite{DG-series0,DG-series1,DG-series2,DG-series3,DG-series4,DG-series5}.
The DG method has the advantages of high-order accuracy, geometric flexibility, easy use with mesh adaptivity, local data structure,
high parallel efficiency, and a good foundation for theoretical analysis. The DG method
has been used widely in scientific and engineering computation.
Meanwhile, conservation laws typically exhibit discontinuous structures such as shock waves,
rarefaction waves, and contact discontinuities and are amenable to mesh adaptation in their numerical solution to enhance numerical resolution and computational efficiency.
It is natural to combine the DG method with mesh adaptation method for the solution
of conservation laws.

A large amount of work has been done in this area.
For example, Bey and Oden \cite{Bey-Oden-1996} combined the $hp$-method with the DG method
for conservation laws and Li and Tang \cite{Li-Tang-2006} solved two-dimensional conservation laws
using a rezoning moving mesh DG method where the physical variables are interpolated from the old mesh
to the new one using conservative interpolation schemes.
Mackenzie and Nicola \cite{Mackenzie-Nicola-2007} solved the Halmiton-Jacobi equation by the DG method
using a moving mesh method based on the moving mesh partial differential equation (MMPDE)
strategy \cite{Huang-Ren-Russell-1994a,Huang-Russell-2011}.
Vilar et al. \cite{Vilar-Maire-Abgrall-Lagrangian-2014} studied a DG discretization for solving
the two-dimensional gas dynamics equations in Lagrangian formulation.
More recently, Uzunca et al. \cite{Uzunca-2017} employed a moving mesh symmetric interior penalty Galerkin
method (SIPG) to solve PDEs with traveling waves. Luo et al. considered a quasi-Lagrange moving mesh DG method (MMDG) for conservation laws \cite{Luo-Huang-Qiu-2019JCP} and multi-component flows \cite{Luo-LHQC-2021}.
Zhang et al. studied the MMDG solution
for the radiative transfer equation \cite{Zhang-Cheng-Huang-Qiu-2020CiCP,Zhang-Huang-Qiu-2020SISC} and shallow water equations (SWEs) \cite{Zhang-Huang-Qiu-2021JSC,Zhang-Huang-Qiu-2022CICP}.
Zhang et al. \cite{Zhang-Xia-Xu-2021JSC} develop a arbitrary Lagrangian-Eulerian discontinuous Galerkin (ALE-DG) methods for the SWEs.
Wang et al. \cite{Wang-Luo-Shashkov-Lagrangian-2020} developed a reconstructed DG Method
for compressible flows in Lagrangian formulation.

In principle, any marching scheme (e.g., see Hairer and Wanner \cite{HW1991})
can be used for the time integration of DG computations of hyperbolic conservation laws,
including explicit and implicit Runge-Kutta methods \cite{SSP-time,SSPERK-2018} and
multi-step methods \cite{Shu-1998-TVDtime}. Nevertheless, explicit schemes have been widely used
in these computations. There are at least two considerations for this.
First, as we can see later, the stability condition
for explicit schemes when applied to hyperbolic equations typically requires the time step-size
to be proportional to the minimum mesh element size, which is considered acceptable in practical computations
with a uniform mesh. Second, due to the highly nonlinear and hyperbolic nature of conservation laws,
there exists hardly any efficient solver for nonlinear algebraic systems (whose linearization is typically non-symmetric
and non-definite) resulting from the implicit temporal discretization.
As such, it does not seem worth the trouble to increase the time step-size using implicit schemes when a uniform mesh is used.
However, this can be a different situation when an adaptive mesh is employed where
some mesh elements can become very small. While implicit schemes for DG computations on adaptive meshes
deserve further investigations, in this work we focus on explicit schemes and their stability on adaptive moving meshes.

Consider the conservation laws in the form
\begin{equation}
\label{PDE-1}
U_t +  \nabla \cdot \mathbf{F}(U,\bm{x}) = 0, \quad \forall \bm{x} \in \Omega
\end{equation}
where $\Omega$ is a polygonal/polyhedral domain in $\mathbb{R}^d$ ($d \ge 1$), $U = (u_1, ..., u_m)^T$ ($m\ge 1$)
is the unknown function, and the flux $\mathbf{F}(U,\bm{x})$ is an $m$-by-$d$ matrix-valued function of $U$ and $\bm{x}$.
A commonly used selection of time step in adaptive DG computations (e.g., see \cite{Luo-Huang-Qiu-2019JCP,Zhang-Huang-Qiu-2021JSC,Zhang-Huang-Qiu-2022CICP}) is
\begin{equation}
\Delta t =\frac{C_{cfl}\;\sigma_{h,min}}{\alpha_{h}},
\label{cfl-old-2}
\end{equation}
where $\sigma_{h,min}$ is the minimum height or diameter of the mesh elements,
$C_{cfl}$ is a positive parameter, $\alpha_{h}$ is the maximum absolute
value of the eigenvalues of the Jacobian matrix of $(\mathbf{F} - U \dot{\bm{X}})\cdot \bm{n}$ (with respect to $U$) taken over all of the edges, $\bm{n}$ is the unit outward normal vector of the edges, and $\dot{\bm{X}}$ is the piecewise linear mesh velocity function.
The choice (\ref{cfl-old-2}) is a direct extension of the CFL conditions studied and
used in DG computation for fixed meshes (e.g., see \cite{DG-series2,DG-review}).
Some researchers have used a different yet mathematical equivalent form,
\begin{equation}
\Delta t = \frac{C_{cfl}}{\alpha_{h} \max\limits_{K \in \mathcal{T}_h}
\Big (\frac{1}{|K|} \sum\limits_{e \in \partial K}  |e| \Big )},
\label{cfl-old-1}
\end{equation}
where $K$ is an element of the mesh $\mathcal{T}_h$, $\partial K$ is the boundary of $K$,
$e$ is an edge of $K$, and $|K|$ and $|e|$ denote the area of $K$
and the length of $e$, respectively.
This condition has also been established by Zhang et al. \cite{Zhang-Xia-Shu-2012JSC}
for positivity preservation for fixed unstructured triangular meshes.
To show the equivalence between (\ref{cfl-old-2}) and (\ref{cfl-old-1}), we notice that, for a simplex $K$,
\[
\max\limits_{e \in \partial K} |e|  \le \sum\limits_{e \in \partial K} |e|  \le (d+1) \max\limits_{e \in \partial K} |e| ,
\]
which implies
\[
\frac{d}{\sigma_{K,min}} \le \frac{1}{|K|} \sum\limits_{e \in \partial K} |e|  \le \frac{d (d+1)}{\sigma_{K,min}} ,
\]
where $\sigma_{K,min}$ denotes the minimum height of $K$ and we have used a geometric property of simplexes,
$|K| = (1/d) \sigma_{K,min} \max\limits_{e \in \partial K} |e|$. Taking maximum over all elements, we get
\begin{equation}
\label{cfl-old-3}
 \frac{\sigma_{h,min}}{d (d+1)} \le
\frac{1}{\max\limits_{K}\Big ( \frac{1}{|K|} \sum\limits_{e \in \partial K} |e| \Big ) }
\le \frac{\sigma_{h,min}}{d} ,
\end{equation}
which gives the equivalence of (\ref{cfl-old-2}) and (\ref{cfl-old-1}).

While the condition (\ref{cfl-old-2}) or (\ref{cfl-old-1}) appears to work well in existing adaptive DG computation for conservation laws, there lacks a theoretical justification of them for non-uniform and moving meshes.
One may also wonder if the coupling between physical quantities and mesh elements
can be taken into consideration for time step selection. The objective of this work is to study these issues.
To be specific, we consider a quasi-Lagrange MMDG method
\cite{Luo-Huang-Qiu-2019JCP, Zhang-Cheng-Huang-Qiu-2020CiCP}
with the Lax-Friedrichs (LF) flux for (\ref{PDE-1}). A CFL condition for the $L^1$ stability of the MMDG method
with $P^0$ elements is then established and analyzed for scalar linear equations (cf. Proposition~\ref{prop-p0-cfl}),
which provides a theoretical justification for the stability of the method. Moreover,
based on this analysis, for the MMDG method with $P^k$ elements ($k \geq1$) and general conservation laws
we propose to choose $\Delta t$ as
\begin{equation}
\Delta t = \frac{C_{cfl}}{ \max\limits_{K \in \mathcal{T}_h}
\Big (\frac{1}{|K|} \sum\limits_{e \in \partial K}  \alpha_e |e| \Big )}
= \frac{C_{cfl}}{ \max\limits_{K \in \mathcal{T}_h}
\Big ( \sum\limits_{e \in \partial K}  \alpha_e \frac{|e|}{|K|} \Big )} ,
\label{cfl-new-1}
\end{equation}
where $\alpha_e=\alpha_e(U,\bm{\dot{X}})$ is the maximum absolute
value of the eigenvalues of the Jacobian matrix of $(\mathbf{F} - U \dot{\bm{X}})\cdot \bm{n}$ (with respect to $U$) taken over edge $e$. This choice is very similar to (\ref{cfl-old-1}).
Indeed, it reduces to (\ref{cfl-old-1}) when $\alpha_e$ is replaced
with the global $\alpha_h$. However, unlike (\ref{cfl-old-1}), the condition (\ref{cfl-new-1}) takes into consideration the spatial variation of $\alpha$ and its coupling with the element height (i.e., $|K|/|e|$).
Moreover, it shows that $\Delta t$ can be increased if the mesh velocity $\bm{\dot{X}}$ can be chosen to minimize $\alpha_e$ in regions where the element height is relatively small.
An example of this is Lagrangian-type methods (e.g.~\cite{Hirt-ALE1-1971,Hirt-ALE2-1974,Kucharik-Shashkov-ALE-2014}) where the mesh velocity is taken as the fluid velocity.
On the other hand, the mesh movement can be determined by other considerations. For example, in the current work
we consider the use of the MMPDE moving mesh method to determine the mesh movement based on solution-Hessian.
In this case, the mesh velocity does not necessarily minimize $\alpha_e$.
If $\alpha_e$ does not change significantly over the domain, then (\ref{cfl-new-1}) is mathematically equivalent to (\ref{cfl-old-2}) and $\Delta t$ is determined essentially by the minimum height of the mesh elements.

The CFL condition such as (\ref{cfl-new-1}) provides a selection strategy for time step-size based on stability. This
CFL-condition-based strategy is widely used in the DG computation of hyperbolic conservation laws.
On the other hand, it is common practice to use an error-based time step-size selection strategy in the computation of ordinary
differential equations (e.g., see Hairer and Wanner \cite{HW1991}).
It is worth studying how error-based time step-size selection strategies fare in the DG computation of hyperbolic conservation laws.
To this end, we use the third-order explicit Strong Stability Preserving (SSP) embedded RK (SSP-ERK(4,3)) pair
of \cite{SSPERK-2018} and the standard
PI controller step-size selection strategy (e.g., see Hairer and Wanner \cite{HW1991}) and present numerical examples
to demonstrate the feasibility of this approach.

The MMPDE moving mesh method \cite{Huang-Ren-Russell-1994a,Huang-Russell-2011} is used to generate
adaptive moving meshes for the numerical examples presented in this work.
A key idea of the MMPDE method is to view any nonuniform mesh as a uniform one
in some Riemannian metric specified by a tensor $\mathbb{M} = \mathbb{M}(\bm{x})$,
a symmetric and uniformly positive definite matrix-valued function that provides the information needed for determining the size, shape, and orientation of the mesh elements throughout the domain.
It has been shown analytically and numerically in \cite{Huang-Kamenski-2018MC}
that the moving mesh generated by the MMPDE method stays nonsingular (free of tangling)
if the metric tensor is bounded and the initial mesh is nonsingular. It is worth pointing out that other adaptive moving mesh methods
(such as Lagrangian-type methods) can also be used; e.g., see
\cite{Bai94a,Baines-2011,Bey-Oden-1996,BHR09,Dumbser-Boscheri-Lagrangian-2013,
Hirt-ALE1-1971,Hirt-ALE2-1974,Huang-Russell-2011,Kucharik-Shashkov-ALE-2014,Morgan-etal-Lagrangian-2014,Tan05,
Vilar-Maire-Abgrall-Lagrangian-2014,Wang-Luo-Shashkov-Lagrangian-2020} and references therein.

An outline of the paper is as follows. The MMDG method is described in Section~\ref{SEC:MMDG} and
the $L^1$ stability analysis of the method with $P^0$ elements is carried out for scalar linear equations
in Section~\ref{SEC:CFL}.
Numerical examples are presented in Section~\ref{SEC:numerics}. In these examples,
the MMPDE moving mesh method is used to generate
adaptive moving meshes. The section also contains the descriptions of the procedure of the MMDG method and
the definition of the metric tensor that is used to control mesh concentration.
The conclusions are given in the final section \ref{SEC:conclusions}.

\section{The moving mesh DG method}
\label{SEC:MMDG}

In this section we describe a quasi-Lagrange MMDG method \cite{Luo-Huang-Qiu-2019JCP, Zhang-Cheng-Huang-Qiu-2020CiCP,Zhang-Huang-Qiu-2022CICP} for solving hyperbolic conservation laws in the form
(\ref{PDE-1}).

To start with, we assume that a sequence of simplicial meshes, $\mathcal{T}^0_h,\,\mathcal{T}^1_h,\, ...$, have been given for $\Omega$ at time instants $t_0,\, t_1,\, ...$ and these meshes have the same number of elements and vertices and the same connectivity.
For numerical results presented in this work, we use the MMPDE moving mesh method
\cite{Huang-Ren-Russell-1994a,Huang-Russell-2011, Huang-Kamenski-2015JCP}
to generate these meshes (cf. Section~\ref{SEC:numerics}).
For any $n \ge 0$ and $t\in[t_n, t_{n+1}]$, we define $\mathcal{T}_h(t)$ as the mesh with the same number
of elements ($N$) and vertices ($N_v$) and the same connectivity as $\mathcal{T}_h^n$, and having
the vertices given by
\begin{equation}\label{location+speed}
\begin{split}
\bm{x}_i(t)= \frac{t-t_n}{\Delta t_n}\bm{x}_i^{n}+\frac{t_{n+1}-t}{\Delta t_n}\bm{x}_i^{n+1},
\quad i = 1,...,N_v ,\quad \Delta t_n = t_{n+1}-t_n.
\end{split}
\end{equation}
Define the piecewise linear mesh velocity function as
\begin{equation}
\label{Xdot-1}
\dot{\bm{X}}(\bm{x},t)= \sum_{i=1}^{N_v} \dot{\bm{x}}_i \phi_i(\bm{x},t)
= \sum_{i=1}^{N_v} \frac{\bm{x}_i^{n+1}-\bm{x}_i^{n}}{\Delta t_n} \phi_i(\bm{x},t),\quad t\in[t_n,t_{n+1}]
\end{equation}
where $ \phi_i(\bm{x},t)$ is the linear basis function at ${\bm{x}}_i$ and $\dot{\bm{x}}_i$ is the nodal velocity.
For any element $K \in \mathcal{T}_h(t)$,
let $P^{k}(K)$ be the set of polynomials of degree at most $k\ge 1$ on $K$.
The DG finite element space is defined as
\begin{equation}\label{Vh}
\mathcal{V}^{k}_h(t)= \{u\in L^2(\Omega):\; u|_{K}\in P^{k}(K),
\; \forall K\in \mathcal{T}_h(t) \}.
\end{equation}

We now are ready to describe the DG discretization of \eqref{PDE-1}.
Multiplying it with an arbitrary test function $\phi\in \mathcal{V}^{k}_h(t)$,
integrating the resulting equation over $K$, and using the Reynolds transport theorem, we get
\begin{equation}\label{DG-0-1}
\frac{d}{d t}\int_{K}U_h \phi d\bm{x} -  \int_{K} \mathbf{H}(U_h,\bm{x}) \cdot \nabla \phi d\bm{x}
+ \sum\limits_{e \in \partial K}\int_{e}\phi \hat{\mathbf{H}}(U_h,\bm{x}) d s= 0,
\end{equation}
where $U_h = U_h(\bm{x}, t)$, $\mathbf{H}(U_h,\bm{x}) = \mathbf{F}(U_h,\bm{x}) - U_h \dot{\bm{X}}$ is the new flux
accounting for the effect of mesh movement,
$\hat{\mathbf{H}}(U_h,\bm{x}) \approx \mathbf{H}(U_h,\bm{x})\cdot \bm{n}$ is a numerical flux, and
$\bm{n}$ is the unit outward normal to edge $e$.
Let $\lambda_{m}$'s be the eigenvalues of the Jacobian matrix of $\mathbf{H}(U_h,\bm{x})\cdot \bm{n}$ with respect to $U_h$ and
$U_{h,K}^{int}$ and $U_{h,K}^{ext}$ be the values of $U_h$ in $K$ and $K'$, respectively, where $K'$ is
the element sharing $e$ with $K$.
Define the $\alpha$-function as
\begin{equation}\label{alpha-f}
\alpha(U_h,\bm{x}) = \max\limits_m \Big (\max\big(
\big|\lambda_{m}(U_{h,K}^{int},\bm{x})\big|, \;
\big|\lambda_{m}(U_{h,K}^{ext},\bm{x})\big|\big)\Big{)}.
\end{equation}
In this work we consider the global/local Lax-Friedrichs (LF) numerical flux,
\begin{align*}
& \hat{ \mathbf{H}}(U_h,\bm{x}) =
\frac{1}{2}\left (
\big{(}\mathbf{H}(U_{h,K}^{int},\bm{x})+\mathbf{H}(U_{h,K}^{ext},\bm{x}) \big{)} \cdot \bm{n}
-\alpha_{LF} (U_{h,K}^{ext}-U_{h,K}^{int})\right ),
\quad \bm{x} \in e \subset \partial K
\end{align*}
where $\alpha_{LF} $ denotes a choice of the $\alpha$-function in this numerical flux.
In practical computation, the second and third terms in \eqref{DG-0-1} are calculated
typically by Gaussian quadrature rules. We denote those by
\begin{equation}
\label{Guass-quad}
\begin{split}
&\int_{K} \mathbf{H}(U_h,\bm{x}) \cdot \nabla \phi d\bm{x}
\approx |K|\sum\limits_{\bm{x}_{G}^K} w_G^K \big( \mathbf{H}(U_h,\bm{x}) \cdot \nabla \phi \big) \big|_{\bm{x}_{G}^K},\quad \hbox{with}\quad \sum w_G^K =1
\\&
\int_{e}\phi \hat{\mathbf{H}}(U_h,\bm{x}) d s \approx
 |e|\Big(\sum\limits_{\bm{x}_{G}^e}w_G^e \big( \phi \hat{\mathbf{H}}(U_h,\bm{x})\big) \big|_{\bm{x}_{G}^e}\Big), \quad \hbox{with}\quad \sum w_G^e =1
\end{split}
\end{equation}
where $\bm{x}_{G}^K$'s and $\bm{x}_{G}^e$'s are the Gauss points on $K$ and $e$, respectively.
For the analytical analysis in the next section, we assume that the weights $w_G^K$'s and $w_G^e$'s are nonnegative.
Combining the above with \eqref{DG-0-1} we obtain the semi-discrete MMDG scheme as
\begin{align}
\label{DG-0}
\frac{d}{d t}\int_{K}U_h \phi d\bm{x}
& -|K|\sum\limits_{\bm{x}_{G}^K} w_G^K \big( \mathbf{H}(U_h,\bm{x}) \cdot \nabla \phi \big) \big|_{\bm{x}_{G}^K}
\\
& + \sum\limits_{e \in \partial K}|e|\Big(\sum\limits_{\bm{x}_{G}^e}w_G^e
\big( \phi \hat{\mathbf{H}}(U_h,\bm{x})\big) \big|_{\bm{x}_{G}^e}\Big) = 0.
\notag
\end{align}
One choice of $\alpha_{LF}$ is the point-wise value of $\alpha (U_h,\bm{x})$, i.e.,
\begin{equation}
\alpha_{p} = \alpha(U_h,\bm{x})|_{\bm{x}_{G}^e} = \max\limits_{m} \Big( \max\limits \big (
\big|\lambda_{m}(U_{h,K}^{int},\bm{x})|_{\bm{x}_{G}^e} \big|, \; \big|\lambda_{m}(U_{h,K}^{ext},\bm{x})|_{\bm{x}_{G}^e} \big|\big) \Big ), ~\bm{x}_G^e \in e\subset\partial K.
\label{alpha-p}
\end{equation}
It is worth pointing out that \eqref{alpha-p} is calculated pointwise.
We can choose it differently, for example, by taking the maximum value over each edge (denoted as $\alpha_{e}$),
or all edges in the mesh (denoted as $\alpha_{h}$), i.e.,
\begin{align}
&\alpha_{e} = \max\limits_{\bm{x}_G^e}  \alpha(U_h,\bm{x})|_{\bm{x}_{G}^e} =
\max\limits_{\bm{x}_G^e,m} \Big ( \max\big (
\big|\lambda^{m}(U_{h,K}^{int},\bm{x})|_{\bm{x}_{G}^e} \big|,
\; \big|\lambda^{m}(U_{h,K}^{ext},\bm{x})|_{\bm{x}_{G}^e} \big|\big) \Big ) ,\label{alpha-e}
\\
&\alpha_{h}=\max\limits_{K,e,\bm{x}_G^e}  \alpha(U_h,\bm{x})|_{\bm{x}_{G}^e} =
\max\limits_{K,e,\bm{x}_G^e,m} \Big(\max\big (
\big|\lambda_{m}(U_{h,K}^{int},\bm{x})|_{\bm{x}_{G}^e} \big|, \; \big|\lambda_{m}(U_{h,K}^{ext},\bm{x})|_{\bm{x}_{G}^e} \big|\big{)} \Big ).\label{alpha-h}
\end{align}
It is the remark that one choice of the $\alpha_{LF}$ in local LF in the computation (e.g., see \cite{DG-review,Luo-Huang-Qiu-2019JCP}) is taken based on cell average, i.e.,
\begin{align}
&\tilde{\alpha}_{e} = \max\limits_{\bm{x}_G^e,m} \Big(\max\big (
\big|\lambda_{m}(\bar{U}_{h,K},\bm{x}_G^e) \big|, \; \big|\lambda_{m}(\bar{U}_{h,K'},\bm{x}_G^e) \big|\big{)} \Big ),\label{alpha-local}
\end{align}
where $\bar{U}_{h,K}$ is the average of $U_h$ on $K$, $\bar{U}_{h,K'}$ is the average of $U_h$ on $K'$, and $K$ and $K'$ are sharing $e\in \partial K$.

A fully discrete MMDG scheme can be obtained by discretizing (\ref{DG-0}) in time. A stability analysis is presented in the next section for a simple case with the explicit Euler scheme and numerical results are presented in Section~\ref{SEC:numerics} with a third-order strong-stability-preserving (SSP) Runge-Kutta scheme.

\section{CFL conditions on adaptive moving meshes}
\label{SEC:CFL}

Generally speaking, it is difficult, if not impossible, to obtain a CFL condition rigorously for a fully discrete version
of the MMDG scheme (\ref{DG-0}) for general conservation law (\ref{PDE-1}).
To gain insight into the stability, we consider a simple
situation with $P^0$-DG for spatial discretization, the explicit Euler scheme for time integration, and
for linear scalar conservation laws with the flux $\mathbf{F} = \bm{a}(\bm{x},t) U$. We also assume that $\Omega$ is
cubical, periodic boundary conditions are used, and $\bm{a}(\bm{x},t)$ is periodic in each coordinate direction.

Under these assumptions, we have
\[
\mathbf{H}(U_h,\bm{x}) = (\bm{a}(\bm{x},t)-\dot{\bm{X}}) U_h,
\quad \lambda(\bm{x},t) = (\bm{a}(\bm{x},t)-\dot{\bm{X}}) \cdot \bm{n},
\quad
\alpha(\bm{x},t) = \big|\lambda(\bm{x},t)\big|.
\]
Moreover,
\begin{align*}
\hat{ \mathbf{H}}(U_h,\bm{x}) & = \frac{1}{2}\Big(
\big(\mathbf{H}(U_{h,K}^{int},\bm{x})+\mathbf{H}(U_{h,K}^{ext},\bm{x}) \big) \cdot \bm{n}
-\alpha_{LF}(\bm{x},t) (U_{h,K}^{ext}-U_{h,K}^{int})\Big)
\\
& = \frac{1}{2}\left ( \lambda(\bm{x},t) (U_{h,K}^{int} + U_{h,K}^{ext}) -\alpha_{LF}(\bm{x},t) (U_{h,K}^{ext}-U_{h,K}^{int})\right )
\\
& = \frac{\alpha_{LF}(\bm{x},t) +\lambda(\bm{x},t) }{2} U_{h,K}^{int}
- \frac{\alpha_{LF}(\bm{x},t) - \lambda(\bm{x},t)}{2} U_{h,K}^{ext} .
\end{align*}
Applying the explicit Euler scheme to (\ref{DG-0}) and taking $\phi = 1$, we get
\begin{equation}
\label{DG-1}
|K^{n+1}| \bar{U}_{h,K}^{n+1}
= |K^{n}|  \bar{U}_{h,K}^{n}
-\Delta t_n \sum\limits_{e \in \partial K^n}|e|
\sum\limits_{\bm{x}_{G}^e}w_G^e \, \hat{\mathbf{H}}(U_h,\bm{x}) \big|_{\bm{x}_{G}^e} = 0 ,
\end{equation}
where $\bar{U}_{h,K}^{n}$ is the average of $U_h^{n}$ on $K^n$, $\bar{U}_{h,K}^{n+1}$ is the average of $U_h^{n+1}$ on $K^{n+1}$, and $K^n$ and $K^{n+1}$ are the corresponding elements in $\mathcal{T}_h^n$
and $\mathcal{T}_h^{n+1}$, respectively.

\begin{prop}
\label{prop-p0-cfl}
The MM $P^0$-DG scheme \eqref{DG-1} with $\mathbf{F} = \bm{a}(\bm{x},t) U$ is $L^1$-stable
under the CFL condition
\begin{equation}
\label{cfl-2}
\Delta t_n \le \frac{1}{\max\limits_{K^n}
\Big (\frac{1}{|K^{n}|} \sum\limits_{e \in \partial K^n}|e|\sum\limits_{\bm{x}_{G}^e} w_G^e~ \alpha_{CFL}^{n}(\bm{x}_{G}^e)\Big ) },
\end{equation}
where $\alpha_{CFL}^{n}(\bm{x}) \geq \alpha_{LF}^{n}(\bm{x}) $ and the subscripts $LF$ and $CFL$ stand for the LF numerical flux and CFL condition, respectively.
\end{prop}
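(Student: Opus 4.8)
The plan is to treat the one-step update \eqref{DG-1} as a linear recurrence for the cell averages and to run the classical monotone-scheme argument, adapted to the moving mesh through the weighting by $|K^n|$ and $|K^{n+1}|$. Since the discrete solution is piecewise constant, the relevant $L^1$ norm at level $n$ is $\|U_h^n\|_{L^1} = \sum_{K^n} |K^n|\,|\bar U_{h,K}^n|$, and the goal is the contraction $\sum_{K^{n+1}}|K^{n+1}|\,|\bar U_{h,K}^{n+1}| \le \sum_{K^n}|K^n|\,|\bar U_{h,K}^n|$. First I would substitute the explicit LF flux $\hat{\mathbf H} = \tfrac12(\alpha_{LF}+\lambda)U_h^{int} - \tfrac12(\alpha_{LF}-\lambda)U_h^{ext}$ with $U_h^{int}=\bar U_{h,K}^n$ and $U_h^{ext}=\bar U_{h,K'}^n$ into \eqref{DG-1}, and rewrite the update as
\[
|K^{n+1}|\,\bar U_{h,K}^{n+1} = c_{KK}\,\bar U_{h,K}^n + \sum_{K'} c_{KK'}\,\bar U_{h,K'}^n,
\]
with diagonal coefficient $c_{KK} = |K^n| - \Delta t_n\sum_{e\in\partial K^n}|e|\sum_{\bm x_G^e} w_G^e\,\tfrac12(\alpha_{LF}+\lambda)$ and off-diagonal coefficient $c_{KK'} = \Delta t_n\,|e|\sum_{\bm x_G^e}w_G^e\,\tfrac12(\alpha_{LF}-\lambda)$ on the shared edge $e$, all integrands evaluated at the Gauss points at time $t_n$.

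The next step is to show that every coefficient is nonnegative. Because the LF flux is only dissipative when $\alpha_{LF}\ge\alpha=|\lambda|$, I would invoke (implicitly, as part of the flux being admissible) the chain $\alpha_{CFL}\ge\alpha_{LF}\ge|\lambda|$. This immediately yields $\alpha_{LF}-\lambda\ge 0$ at each Gauss point, so $c_{KK'}\ge 0$, and it gives the pointwise bound $\tfrac12(\alpha_{LF}+\lambda)\le\alpha_{LF}\le\alpha_{CFL}$. The CFL condition \eqref{cfl-2} is precisely the statement that $\Delta t_n\sum_{e\in\partial K^n}|e|\sum_{\bm x_G^e}w_G^e\,\alpha_{CFL}^n(\bm x_G^e)\le |K^n|$ for every $K^n$; combining this with the previous bound shows $c_{KK}\ge |K^n| - \Delta t_n\sum_e|e|\sum_G w_G^e\,\alpha_{CFL}\ge 0$.

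With nonnegativity in hand I would take absolute values, pass them through the (nonnegative) coefficients to obtain $|K^{n+1}|\,|\bar U_{h,K}^{n+1}|\le c_{KK}|\bar U_{h,K}^n|+\sum_{K'}c_{KK'}|\bar U_{h,K'}^n|$, sum over all $K$, and interchange the order of summation to collect the total multiplier of each $|\bar U_{h,J}^n|$. The decisive computation is that this total multiplier is exactly $|J^n|$: the diagonal contribution $c_{JJ}$ loses the term $\Delta t_n\sum_{e\in\partial J}|e|\sum_G w_G^e\,\tfrac12(\alpha_{LF}+\lambda_e)$, while the neighbor across each edge $e\in\partial J$ supplies $\Delta t_n|e|\sum_G w_G^e\,\tfrac12(\alpha_{LF}-\lambda_e^{(K)})$, and the sign flip of the outward normal across a shared edge gives $\lambda_e^{(K)}=-\lambda_e$, so these contributions equal $\Delta t_n|e|\sum_G w_G^e\,\tfrac12(\alpha_{LF}+\lambda_e)$ and cancel the deficit exactly. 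This collapses the sum to $\sum_{K^{n+1}}|K^{n+1}||\bar U_{h,K}^{n+1}|\le\sum_{J}|J^n||\bar U_{h,J}^n|$, which is the claimed $L^1$ stability.

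The main obstacle I anticipate is the bookkeeping behind this exact cancellation rather than any hard estimate. It rests on three consistency facts that must be checked carefully: that $\alpha_{LF}$ and the quadrature nodes/weights are single-valued on each interior edge (so the int/ext-symmetric definition \eqref{alpha-f} and a common edge rule are essential), that every edge is interior so the neighbor sum is complete (guaranteed by the assumed periodicity, which removes boundary terms), and that the normal reverses across a shared edge so that $\lambda$ changes sign while $\alpha_{LF}$ does not. A secondary point to pin down is the joint role of the hypothesis $\alpha_{CFL}\ge\alpha_{LF}$ and the LF admissibility $\alpha_{LF}\ge|\lambda|$, which is what makes $\alpha_{CFL}$ a legitimate majorant of $\tfrac12(\alpha_{LF}+\lambda)$ in the CFL threshold; note that no geometric conservation law is needed, since $|K^{n+1}|$ enters only on the left-hand side and the right-hand coefficients already sum to $|K^n|$.
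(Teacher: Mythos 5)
Your proposal is correct and follows essentially the same route as the paper's proof: substitute the LF flux into \eqref{DG-1}, use $\alpha_{CFL}\ge\alpha_{LF}\ge|\lambda|$ together with \eqref{cfl-2} to make all coefficients of the cell averages nonnegative, take absolute values, and exploit periodicity plus the sign flip of $\lambda$ under $\bm{n}\to-\bm{n}$ across each shared edge to reindex the neighbor sum so that the total weight of each $|\bar{U}_{h,K}^{n}|$ collapses to $|K^{n}|$. The only difference is presentational (you package the computation as a monotone-scheme coefficient argument, while the paper carries out the same cancellation inline), so no further comparison is needed.
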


\begin{proof}
From the assumption of $P^0$-DG, for any $\bm{x} \in e$ of $K^n$,
we have
\[
U_{h,K}^{int}|_{\bm{x}_{G}^e} = \bar{U}_{h,K}^{int} = \bar{U}_{h,K}, \quad
U_{h,K}^{ext}|_{\bm{x}_{G}^e} = \bar{U}_{h,K}^{ext} = \bar{U}_{h,K'},
\]
where $K'$ is the element sharing $e$ with $K$. With this, \eqref{DG-1} can be rewritten as
\begin{align}
|K^{n+1}| \bar{U}_{h,K}^{n+1}
&= |K^{n}|  \bar{U}_{h,K}^{n}
- \Delta t_n  \sum\limits_{e \in \partial K^n} |e| \sum\limits_{\bm{x}_{G}^e}w_G^e \Big(\frac{\alpha_{LF}^n+\lambda^n}{2}  U_{h,K}^{n,int} - \frac{ \alpha_{LF}^n-\lambda^n}{2}  U_{h,K}^{n,ext} \Big )
\Big|_{\bm{x}_{G}^e}
\notag \\&=
|K^{n}|  \bar{U}_{h,K}^{n}
- \Delta t_n  \sum\limits_{e \in \partial K^n} |e|\bar{U}_{h,K}^{n}  \sum\limits_{\bm{x}_{G}^e}w_G^e \Big(\frac{\alpha_{LF}^n(\bm{x}_{G}^e)+\lambda^n(\bm{x}_{G}^e)}{2}  \Big)
\notag \\&~~~~~~~~~~~~~~~~+\Delta t_n \sum\limits_{e \in \partial K^n} |e|\bar{U}_{h,K}^{n,ext} \sum\limits_{\bm{x}_{G}^e}w_G^e \Big(\frac{ \alpha_{LF}^n(\bm{x}_{G}^e)-\lambda^n(\bm{x}_{G}^e)}{2}  \Big)
\notag \\&=
|K^{n}|  \bar{U}_{h,K}^{n} \Big [
1 - \frac{\Delta t_n}{|K^{n}|}  \sum\limits_{e \in \partial K^n} |e|  \sum\limits_{\bm{x}_{G}^e}w_G^e \Big(\frac{\alpha_{LF}^n(\bm{x}_{G}^e)+\lambda^n(\bm{x}_{G}^e)}{2}  \Big)\Big ]
\notag \\&~~~~~~~~~~~~~~~~+\Delta t_n \sum\limits_{e \in \partial K^n} |e|\bar{U}_{h,K}^{n,ext} \sum\limits_{\bm{x}_{G}^e}w_G^e
\Big(\frac{ \alpha_{LF}^n(\bm{x}_{G}^e)-\lambda^n(\bm{x}_{G}^e)}{2}  \Big) .
\label{DG-2}
\end{align}
Notice that, for any $\bm{x}_{G}^e \in e\subset \partial K^n$, we have
\[
0\leq\alpha_{LF}^n(\bm{x}_{G}^e) + \lambda^n(\bm{x}_{G}^e)\leq 2\alpha_{CFL}^n(\bm{x}_{G}^e),
\quad 0 \le \alpha_{LF}^n(\bm{x}_{G}^e) - \lambda^n(\bm{x}_{G}^e).
\]
From the CFL condition \eqref{cfl-2}, we have
\[
1 - \frac{\Delta t_n}{|K^{n}|} \sum\limits_{e \in \partial K^n}|e| \sum\limits_{\bm{x}_{G}^e}w_G^e
 \Big( \frac{\alpha_{LF}^n(\bm{x}_{G}^e)+\lambda^n(\bm{x}_{G}^e)}{2} \Big ) \geq 0 .
\]
From this, taking the absolute value on both side of \eqref{DG-2} gives
\begin{align*}
|K^{n+1}|  |\bar{U}_{h,K}^{n+1}| \leq & |K^{n}|  |\bar{U}_{h,K}^{n}|  \Big [ 1 -
\frac{\Delta t_n}{|K^{n}|} \sum\limits_{e \in \partial K^n}|e|
\sum\limits_{\bm{x}_{G}^e}w_G^e \Big( \frac{\alpha_{LF}^n(\bm{x}_{G}^e)+\lambda^n(\bm{x}_{G}^e)}{2} \Big ) \Big ]
\notag \\
& \quad + \Delta t_n  \sum\limits_{e \in \partial K^n} |e||\bar{U}_{h,K}^{n,ext} | \sum\limits_{\bm{x}_{G}^e}w_G^e \Big ( \frac{ \alpha_{LF}^n(\bm{x}_{G}^e) - \lambda^n(\bm{x}_{G}^e)}{2}\Big).
\end{align*}
Summing this over all elements, we get
\begin{align*}
\sum_{K^{n+1}} |K^{n+1}|  |\bar{U}_{h,K}^{n+1}| \leq & \sum_{K^n} |K^{n}|  |\bar{U}_{h,K}^{n}|  \Big [ 1 -
\frac{\Delta t_n}{|K^{n}|} \sum\limits_{e \in \partial K^n}|e|
\sum\limits_{\bm{x}_{G}^e}w_G^e \Big( \frac{\alpha_{LF}^n(\bm{x}_{G}^e)+\lambda^n(\bm{x}_{G}^e)}{2} \Big ) \Big ]
\notag \\
& \quad + \Delta t_n \sum_{K^n} \sum\limits_{e \in \partial K^n} |e||\bar{U}_{h,K}^{n,ext} | \sum\limits_{\bm{x}_{G}^e}w_G^e \Big ( \frac{ \alpha_{LF}^n(\bm{x}_{G}^e) - \lambda^n(\bm{x}_{G}^e)}{2}\Big).
\end{align*}
We notice that $\sum\limits_{e \in \partial K^n}  |e| \bar{U}_{h,K}^{n,ext}(\cdots) $
goes over all neighboring elements ($K'$) of $K^n$ and
each term  can be considered to be associated with $K'$ but with $\bm{n}$ being changed to $-\bm{n}$ (because the unit outward normal of $e$ in view of $K^n$ is opposite to the unit outward normal of $e$ in view of $K'$).
From the periodicity assumption
on the boundary conditions and $a(\bm{x},t)$, we can rewrite the second term on the right-hand side of the above equation as
\begin{align*}
& \Delta t_n \sum_{K^n} \sum\limits_{e \in \partial K^n} |e||\bar{U}_{h,K}^{n,ext}| \sum\limits_{\bm{x}_{G}^e}w_G^e \Big ( \frac{ \alpha_{LF}^n(\bm{x}_{G}^e) - \lambda^n(\bm{x}_{G}^e)}{2}\Big)
\\
& = \Delta t_n \sum_{K^n} \sum\limits_{e \in \partial K^n} |e| |\bar{U}_{h,K}^{n}| \sum\limits_{\bm{x}_{G}^e} w_G^e\Big ( \frac{ \alpha_{LF}^n(\bm{x}_{G}^e) + \lambda^n(\bm{x}_{G}^e)}{2}\Big) .
\end{align*}
Combining these, we have
\begin{align*}
\sum_{K^{n+1}} |K^{n+1}|  |\bar{U}_{h,K}^{n+1}| &\leq\sum_{K^n} |K^{n}|  |\bar{U}_{h,K}^{n} |\Big [ 1 -
\frac{\Delta t_n}{|K^{n}|} \sum\limits_{e \in \partial K^n}|e|
\sum\limits_{\bm{x}_{G}^e}w_G^e \Big( \frac{\alpha_{LF}^n(\bm{x}_{G}^e)+\lambda^n(\bm{x}_{G}^e)}{2} \Big ) \Big ]
\notag \\
& \quad + \Delta t_n \sum_{K^n} \sum\limits_{e \in \partial K^n} |e||\bar{U}_{h,K}^{n} | \sum\limits_{\bm{x}_{G}^e}w_G^e \Big ( \frac{ \alpha_{LF}^n(\bm{x}_{G}^e) + \lambda^n(\bm{x}_{G}^e)}{2}\Big)
\\
& = \sum_K |K^{n}| | \bar{U}_{h,K}^{n}| .
\end{align*}
Hence, the scheme is $L^1$-stable.
\end{proof}

\vspace{20pt}

It should be pointed out that the CFL condition (\ref{cfl-2}) is only a sufficient condition. Nevertheless, it offers
several insights on the maximum time step allowed by stability. We elaborate these in the following remarks.

\begin{rem}
\label{rem-cfl-0}
Condition (\ref{cfl-2}) involves an important factor $|e|/|K|$. It is known that
\begin{equation}
\label{height-1}
\frac{|e|}{|K|} = \frac{d}{\sigma_e}, \quad \forall e \in \partial K
\end{equation}
where $\sigma_e$ is an element height defined as the distance between $e$ and the vertex of $K$ opposite to $e$.
Using this, we can rewrite (\ref{cfl-2}) into
\begin{equation}
\label{cfl-2-1}
\Delta t \le \frac{1}{d \max\limits_{K}
\Big ( \sum\limits_{e \in \partial K} \frac{1}{\sigma_e}\sum\limits_{\bm{x}_{G}^e} w_G^e~\alpha_{CFL}(\bm{x}_{G}^e) \Big ) }.
\end{equation}
This indicates that the allowed maximum time step depends on the coupling between
$\sigma_e$ and $\alpha_{CFL}(\bm{x}_{G}^e)$.
Recall that $\alpha$ (cf. (\ref{alpha-f})) is defined as the maximum of the eigenvalue of the Jacobian matrix of
$(\mathbf{F}(U_h,\bm{x}) - U_h \dot{\bm{X}})\cdot \bm{n}$. Thus, if the mesh velocity can be chosen to minimize
$\alpha_{CFL}$ (such as in Lagrangian-type methods; e.g., see
\cite{Hirt-ALE1-1971,Hirt-ALE2-1974,Kucharik-Shashkov-ALE-2014}), a larger time step can be used.
On the other hand, the mesh movement can be determined by other considerations. For example, the meshes
in the examples of Section \ref{SEC:numerics} are moved via the MMPDE method using solution-Hessian based mesh
adaptation.  In this case, the mesh velocity does not necessarily minimize $\alpha_{CFL}$.
The condition (\ref{cfl-2-1}) shows that, if $\alpha_{CFL}(\bm{x}_{G}^e)$ does not change significantly over the domain,
$\Delta t$ is determined by the minimum height $\sigma_{h,min} = \min_{K, e} \sigma_e$ of the mesh elements.
\end{rem}

\begin{rem}
\label{rem-cfl-1}
From the above proof we can see that the choice of $\alpha$ in the CFL condition (\ref{cfl-2}) can be different from that in the DG scheme \eqref{alpha-p} as long as
\begin{equation}
\label{alpha-0}
\alpha_{CFL}(\bm{x}) \geq\alpha_{LF}(\bm{x}).
\end{equation}
For example, we can use $\alpha_{LF} = \alpha_{p} $ (pointwise, cf. \eqref{alpha-p}) for the scheme (denoted as $\alpha_{LF,p}$) and $\alpha_{CFL} = \alpha_{h}$ (global, cf. \eqref{alpha-h}) for the CFL condition (denoted as $\alpha_{CFL,h} $),
which has been expressed for the general form (\ref{PDE-1}) of conservation laws.
This works since (\ref{alpha-0}) is satisfied.
On the other hand, the proof of Proposition~\ref{prop-p0-cfl} will not hold in general for the choice with
$\alpha_{LF} = \alpha_{h}$ and $ \alpha_{CFL} = \alpha_{p}$ since it violates (\ref{alpha-0}).
As a consequence, it is unclear if there is a theoretical guarantee that the scheme is $L^1$-stable for this choice.
\end{rem}

\begin{rem}
\label{rem-cfl-2}
For the choice $\alpha_{CFL} = \alpha_h$ (denoted as $\alpha_{CFL,h}$), the CFL condition \eqref{cfl-2} becomes
\begin{equation}
\Delta t \le \frac{1}{\alpha_{CFL,h} \max\limits_{K}\Big ( \frac{1}{|K|} \sum\limits_{e \in \partial K} |e| \Big ) } .
\label{cfl-3}
\end{equation}
This corresponds to the CFL condition (\ref{cfl-old-1}) that has been commonly used in existing adaptive DG computation
and is a direct extension of CFL conditions used for fixed, uniform meshes.
\end{rem}

\begin{rem}
\label{rem-cfl-3}
We can use something in between the very local $\alpha_p$ \eqref{alpha-p} and the global one in \eqref{alpha-h}.
For example, we take the maximum value of $\alpha$ over the Gauss points on edge $e$ (cf. \eqref{alpha-e}).
For this choice ($\alpha_{CFL} = \alpha_e$, denoted as $\alpha_{CFL,e}$), the CFL condition \eqref{cfl-2} becomes
\begin{equation}
\Delta t \le \frac{1}{\max\limits_{K} \Big ( \frac{1}{|K|} \sum\limits_{e \in \partial K}  |e|\alpha_{CFL,e} \Big )} .
\label{cfl-4}
\end{equation}
\end{rem}

\begin{rem}
\label{rem-cfl-4}
Since an explicit Runge-Kutta scheme can be expressed as a combination of
the explicit Euler scheme with different time stepsize, we expect that the above analysis applies
to explicit Runge-Kutta schemes as well. Moreover,
for general $P^k$-DG ($k\ge 0$) and general systems of conservation laws, based on
\cite{DG-series0,DG-series2}, we suggest to use (\ref{cfl-new-1}),
where $\alpha_e$ is defined in \eqref{alpha-e}.
A choice of $C_{cfl}$
is $C_{cfl} \leq 1/(2k+1)$ \cite{DG-series2}.
\end{rem}

\section{Numerical results}
\label{SEC:numerics}

In this section we present numerical results obtained with the MMDG method described in the previous sections
with the third-order explicit SSP Runge-Kutta scheme (SSP RK3)\cite{SSP-time} for one- and two-dimensional Burgers' equations and Euler equations.
The moving mesh is generated by the MMPDE moving mesh method; e.g., see
\cite[Section 4]{Zhang-Cheng-Huang-Qiu-2020CiCP} or \cite[Section 3]{Zhang-Huang-Qiu-2020SISC} for a brief yet complete description of the method and
\cite{Huang-Ren-Russell-1994a,Huang-Sun-2003JCP,Huang-Russell-2011,Huang-Kamenski-2015JCP,Huang-Kamenski-2018MC}
for a more detailed description and a development history.
A key idea of the MMPDE method is to view any nonuniform mesh as a uniform one
in some Riemannian metric specified by a tensor $\mathbb{M} = \mathbb{M}(\bm{x})$,
a symmetric and uniformly positive definite matrix-valued function that  provides the information needed for
determining the size, shape, and orientation of the mesh elements throughout the domain.

In this work we use an optimal metric tensor based on the $L^2$-norm of piece linear interpolation error
\cite{Huang-Sun-2003JCP, Huang-Russell-2011}.
To be specific, we consider a physical variable $u$ and its finite element approximation $u_h$.
Let $H_K$ be a recovered Hessian of $u_h$ on $K\in \mathcal{T}_{h}$ such as one obtained
using least squares fitting. Assuming that the eigen-decomposition of $H_K$ is given by
\[
H_K = Q\hbox{diag}(\lambda_1,\cdots,\lambda_d)Q^T,
\]
where $Q$ is an orthogonal matrix, we define
\[
|H_K| = Q\hbox{diag}(|\lambda_1|,...,|\lambda_d|)Q^T.
\]
The metric tensor is defined as
\begin{equation}
\label{M-u}
\mathbb{M}_{K} =\det \big{(}\beta_{h}\mathbb{I}+|H_K|\big{)}^{-\frac{1}{d+4}}
\big{(}\beta_{h}\mathbb{I}+|H_K|\big{)},
\quad \forall K \in \mathcal{T}_h
\end{equation}
where $\mathbb{I}$ is the identity matrix, $\det(\cdot)$ is the determinant of a matrix,
and $\beta_{h}$ is a regularization parameter defined through the algebraic equation
\[
\sum_{K\in\mathcal{T}_h}|K|\, \hbox{det}(\beta_{h}\mathbb{I}+|H_K|)^{\frac{2}{d+4}}
=2\sum_{K\in\mathcal{T}_h}|K|\,
\hbox{det}(|H_K|)^{\frac{2}{d+4}}.
\]
Roughly speaking, the choice of (\ref{M-u}) is to concentrate mesh points in regions where
the determinant of the Hessian is large.

In our numerical results,  we use the physical solution $u$ to compute the metric tensor for Burgers' equation
and the density $\rho$ and the entropy $\mathcal{S} = \ln(P\rho^{-\gamma})$ for the Euler equations, unless otherwise stated.
To explain the latter, we first compute $\mathbb{M}^{\mathcal{\rho}}_{K} $ and $\mathbb{M}^{\mathcal{S}}_{K} $ using
(\ref{M-u}) with $u = \rho$ and $\mathcal{S}$, respectively.
Then, a new metric tensor is obtained through matrix intersection as
\begin{equation}\label{M-Srho}
\tilde{\mathbb{M}}_{K}
=\frac{{\mathbb{M}}^{\mathcal{S}}_{K} }{|||{\mathbb{M}}^{\mathcal{S}}_{K}|||}
\cap\frac{\mathbb{M}^{\rho}_{K}}{|||{\mathbb{M}}^{\rho}_{K}|||} ,
\end{equation}
where $|||\cdot |||$ denotes the maximum absolute value
of the entries of a matrix and ``$\cap$" stands for matrix intersection.
The reader is referred to
\cite{Zhang-Cheng-Huang-Qiu-2020CiCP} for the definition and geometric interpretation of matrix intersection.

The procedure of the MMDG method is presented in Algorithm~\ref{QLMM-DG}.
\begin{breakablealgorithm}
\caption{The MMDG method for hyperbolic conservation laws.}
\label{QLMM-DG}
\begin{itemize}
\item[0.] {\bf Initialization.}
For a given initial mesh $\mathcal{T}_h^0$, project the initial physical variables into the DG space
$\mathcal{V}_h^{k,0}$ to obtain $U^0_h$.

For $n = 0, 1, \cdots$, do

\item[1.] {\bf Mesh adaptation.}
\begin{enumerate}
\item [(1.1)] Compute the time step $\tilde{\Delta} t_n$ according to \eqref{cfl-new-1}
	based on $\mathcal{T}_h^n=\{\bm{x}^{n}_i\}$ and $U^n_h$, i.e.,
	\begin{equation}
	\tilde{\Delta} t_n = \frac{C_{cfl}}{\max\limits_{K^n} \Big ( \frac{1}{|K^{n}|} \sum\limits_{e \in \partial K^n}
	|e|  \tilde{\alpha}_{CFL}^n\Big )} .
	\label{cfl-5-1}
	\end{equation}
where $\tilde{\alpha}^n_{CFL}$ is chosen based on the eigenvalues of the Jacobian matrix of $\mathbf{F}\cdot \bm{n}$ (with respect to $U$) evaluated on edge $e$.
  \item [(1.2)] Compute the metric tensor $\mathbb{M}$ based on $\mathcal{T}_h^n$ and $U^n_h$.
  \item [(1.3)] Generate the new mesh $\tilde{\mathcal{T}}_h^{n+1}=\{\tilde{\bm{x}}^{n+1}_i\}$ using the MMPDE moving mesh method.
  \item [(1.4)] Compute the nodal mesh velocity as
\[
\dot{\bm{x}}^{n}_i =\frac{\tilde{\bm{x}}^{n+1}_i - \bm{x}^{n}_i}{\tilde{\Delta} t_n},\quad i=1,...,N_v.
\]
\item [(1.5)] Compute the time step $\Delta t_n$ (using \eqref{cfl-new-1}) based on $\mathcal{T}_h^n$, $\tilde{\mathcal{T}}_h^{n+1}$ and $U^n_h$ as
\begin{equation}
	\Delta t_n = \frac{C_{cfl}}{\max\limits_{K}
\left[\max \left(\frac{ \sum\limits_{e \in \partial K^n}  |e|\alpha^n_{CFL} }{|K^{n}|},\frac{ \sum\limits_{e \in \partial \tilde{K}^{n+1}}  |e|\alpha^n_{CFL} }{|\tilde{K}^{n+1}|}\right)\right]},
	\label{cfl-5}
	\end{equation}
where $\alpha^n_{CFL}$ is chosen based on the eigenvalues of the Jacobian matrix of $(\mathbf{F} - U \dot{\bm{X}})\cdot \bm{n}$ (with respect to $U$) evaluated on edge $e$.
\item [(1.6)] Finally, the physical mesh $\mathcal{T}_h^{n+1}$ is defined as
\[
\bm{x}^{n+1}_i =\bm{x}^{n}_i + \Delta t_n ~\dot{\bm{x}}^{n}_i ,\quad i=1,...,N_v .
\]
\end{enumerate}
\item[2.] {\bf Solution of the physical equations on the moving mesh.}
Integrate the physical equations from $t_n$ to $t_{n+1}$ using the MMDG scheme to obtain $U^{n+1}_h$.
\end{itemize}
\end{breakablealgorithm}

It is remarked that the CFL condition (\ref{cfl-5}) has been used in the above algorithm to take the old and new meshes
$\mathcal{T}_h^n$ and $\mathcal{T}_h^{n+1}$ into consideration.
The rationale behind this is that the SSP RK3 scheme we use for the time integration has three stages that can be viewed roughly as the explicit Euler scheme from
$t_n$ to $t_{n+1}$, $t_{n+1}$ to $t_{n+\frac{1}{2}}$, and $t_{n+\frac{1}{2}}$ to $t_{n+1}$, respectively.
The computation of the right-hand side of (\ref{DG-1}) involves both $\mathcal{T}_h^n$ and $\mathcal{T}_h^{n+1}$
and thus it would be better to take the effects of these meshes into consideration more directly
in time step even the mesh velocity is involved in the computation of $\alpha_e$.
In principle, we should also update $\alpha_e$ during the Runge-Kutta stages. However, this can cause
changes in the time step-size during the Runge-Kutta stepping, which requires to re-start the stepping with a new time step-size.
To avoid this complicity, we choose to freeze $\alpha_e$ at $t=t_n$.

We also note that $\alpha_e$ has been used in the algorithm and it can be replaced by $\alpha_h$.
Unless otherwise stated, $\alpha_{e}$ is chosen in the LF numerical flux.
In fact, in the following numerical examples, we consider two options $\alpha_e$ and $\alpha_h$
for each of the LF numerical flux and the CFL condition. For example, ($\alpha_{CFL,h}$, $\alpha_{LF,e}$) indicates that $\alpha_h$ is used in the CFL condition and $\alpha_{e}$ used in the LF numerical flux.
Furthermore, we consider moving mesh $P^k$-DG with $k = 1$, $2$, and $3$.
We take the CFL number $C_{cfl}$ as $0.3$ for $P^1$-DG, $0.15$ for $P^2$-DG, and $0.1$ for $P^3$-DG.

The CFL condition such as (\ref{cfl-5}) provides a selection strategy for time step-size based on stability. This strategy is
widely used in the DG computation of hyperbolic conservation laws.
On the other hand, it is common practice to use an error-based time step-size selection strategy in the computation of ordinary
differential equations (e.g., see Hairer and Wanner \cite{HW1991}).
It is worth studying how error-based time step-size selection strategies fare in the DG computation of hyperbolic conservation laws.
Interestingly, Conde et al. \cite{SSPERK-2018} developed a family of embedded pairs of SSP Runge-Kutta schemes and
combined them with WENO5 (for spatial discretization on fixed meshes) for the numerical solution of
the one-dimensional Euler equations.
Particularly, they compared the performance of the third-order explicit SSP embedded RK (SSP-ERK(4,3)) pair with RK(3,2) of Bogacki and Shampine \cite{ode23-1989} and showed that both pairs lead to stable computation while
SSP-ERK(4,3) offers stronger stability. In this section, we also present numerical results to demonstrate
this idea of using error-based time step-size selection strategies for the computation of hyperbolic conservation laws.
Our demonstration is more general than that of \cite{SSPERK-2018} in the sense that we consider one- and two-dimensional
conservation laws and DG discretization on adaptive moving meshes. We use SSP-ERK(4,3) of \cite{SSPERK-2018}
(see its Butcher tableau in Table \ref{tab:Butcher}) and the standard PI controller for time step-size
selection \cite{HW1991}, with the relative and absolute tolerances chosen as $1\times 10^{-6}$ and $1\times 10^{-8}$, respectively.
This selection strategy is compared with that based on the CFL condition.

\begin{table}[H]
\caption{Butcher tableau for SSP ERK(4,3).}
\vspace{3pt}
\centering
\label{tab:Butcher}
\begin{tabular}{cccc|c}
                &                &              &                & $0$ \\
  $\frac{1}{2}$ &                &              &                & $\frac{1}{2}$ \\
  $\frac{1}{2}$ &$\frac{1}{2}$   &              &                & $1 $   \\
  $\frac{1}{6}$ &$\frac{1}{6}$   &$\frac{1}{6}$ &               & $\frac{1}{2}$ \\
  \hline
  $\frac{1}{6}$ &$\frac{1}{6}$   &$\frac{1}{6}$ & $\frac{1}{2}$  & $1 $\\
  $\frac{1}{4}$ &$\frac{1}{4}$   &$\frac{1}{4}$ & $\frac{1}{4}$  & $1 $ \\
\end{tabular}
\end{table}

\begin{example}\label{Burgers-1d}
(1D Burgers' equation)
\end{example}
We first consider Burgers' equation in one dimension,
\begin{equation}
\label{burgers-2d}
u_t +  \big(\frac{u^2}{2}\big)_x  = 0,\quad x\in(0,2)
\end{equation}
subject to the initial condition $u(x,0) = \frac{1}{2} + \sin(\pi x)$ and
periodic boundary conditions. The final time is $T = 1$.

The mesh trajectories, solution, and time step-size obtained with the moving mesh $P^k$-DG method ($k=1,2,3$)
and $N=100$ are shown in Fig.~\ref{Fig:Burgers-1d-Pk}.
Three combinations of $\alpha_e$ and $\alpha_h$ are used: ($\alpha_{CFL,e}$, $\alpha_{LF,e}$), ($\alpha_{CFL,h}$, $\alpha_{LF,e}$), and ($\alpha_{CFL,h}$, $\alpha_{LF,h}$).
They all lead to stable computation and almost identical solutions and mesh trajectories.
A close examination on $\Delta t$ indicates that ($\alpha_{CFL,e}$, $\alpha_{LF,e}$) gives a slightly larger
and less oscillatory time step than those with ($\alpha_{CFL,h}$, $\alpha_{LF,e}$)
and ($\alpha_{CFL,h}$, $\alpha_{LF,h}$). We may attribute the former to the fact $\alpha_e \le \alpha_h$ (cf. (\ref{cfl-5})).
However, it is unclear to the authors why ($\alpha_{CFL,h}$, $\alpha_{LF,e}$)
and ($\alpha_{CFL,h}$, $\alpha_{LF,h}$), which appear to be more stable, lead to more oscillatory $\Delta t$.
This feature is also observed in other examples presented in this section except Example \ref{RP-2d}.
The figures also show that the time step behaves similar qualitatively
and quantitatively for ($\alpha_{CFL,h}$, $\alpha_{LF,e}$) and ($\alpha_{CFL,h}$, $\alpha_{LF,h}$).

The mesh trajectories ($N=100$) obtained with the $P^1$-DG method of $\Delta t$ and a PI-controller time step-size selection
strategy (denoted as ERK PI) are plotted in Fig.~\ref{Fig:Burgers-1d-Pk-ERK}(a).
The solution and time step-size are obtained with the $P^1$-DG method and ERK PI and ($\alpha_{CFL,e}$, $\alpha_{LF,e}$)
step-size selection strategies are shown in Fig.~\ref{Fig:Burgers-1d-Pk-ERK}(d)
and Fig.~\ref{Fig:Burgers-1d-Pk-ERK}(g), respectively.
The results of $P^2$-DG and $P^3$-DG are also shown in Fig.~\ref{Fig:Burgers-1d-Pk-ERK}.
From Fig.~\ref{Fig:Burgers-1d-Pk-ERK}, we see that both time-size selection strategies lead to stable computation
and almost identical solutions and mesh trajectories. ERK PI produces slightly larger time step-size than
($\alpha_{CFL,e}$, $\alpha_{LF,e}$), and this is especially true for higher-order DG. To guarantee the stability, the CFL number
and thus $\Delta t$ in the CFL condition decrease when $k$ increases. This is reflected in Fig.~\ref{Fig:Burgers-1d-Pk-ERK}.
On the other hand, $\Delta t$ selected by ERK PI does not seem to have big changes when $k$ increases.
As a result, the difference in $\Delta t$ selected by ERK PI and CFL condition becomes bigger as $k$ increases.
Note that ERK PI leads to more oscillatory $\Delta t$, which is consistent with observations made in the computation
of general stiff equations; e.g., see \cite{HW1991}.

The time step-size obtained with $N=50,~100,~200$ is shown in Fig~\ref{Fig:Burgers-1d-Pk-ERK-dt}.
The number of time steps obtained with $N=50,~100,~200$ is shown in Fig~\ref{Fig:Burgers-1d-Pk-Nsteps}.
The number of time steps appears to be a linear function of $N$ for all cases.
Interestingly, the slope of the straight lines for CFL-based selection is significantly larger than that associated with ERK PI,
and becomes larger as $k$ increases. On the other hand, the lines associated with ERK PI for $k=1,2,3$ differ only slightly.

\begin{figure}[H]
\centering
\subfigure[$P^1$-DG: mesh trajectories]{
\includegraphics[width=0.31\textwidth,trim=25 5 45 20,clip]{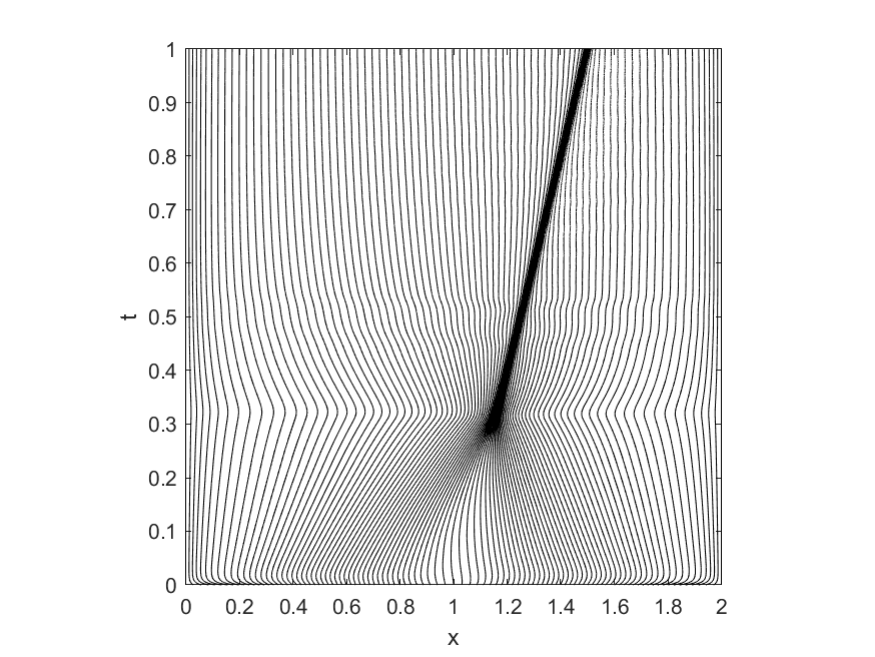}}
\subfigure[$P^2$-DG: mesh trajectories]{
\includegraphics[width=0.31\textwidth,trim=25 5 45 20,clip]{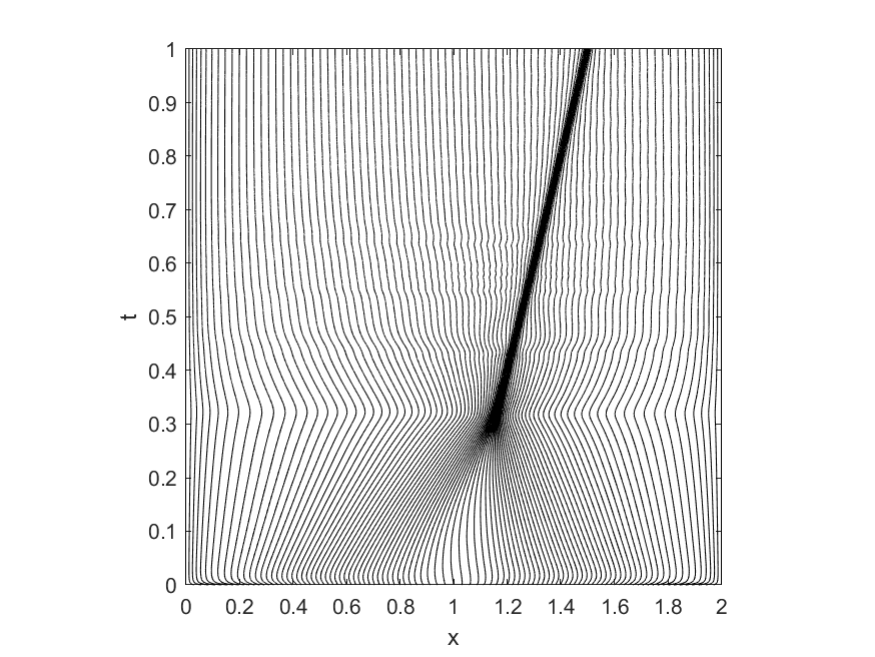}}
\subfigure[$P^3$-DG: mesh trajectories]{
\includegraphics[width=0.31\textwidth,trim=25 5 45 20,clip]{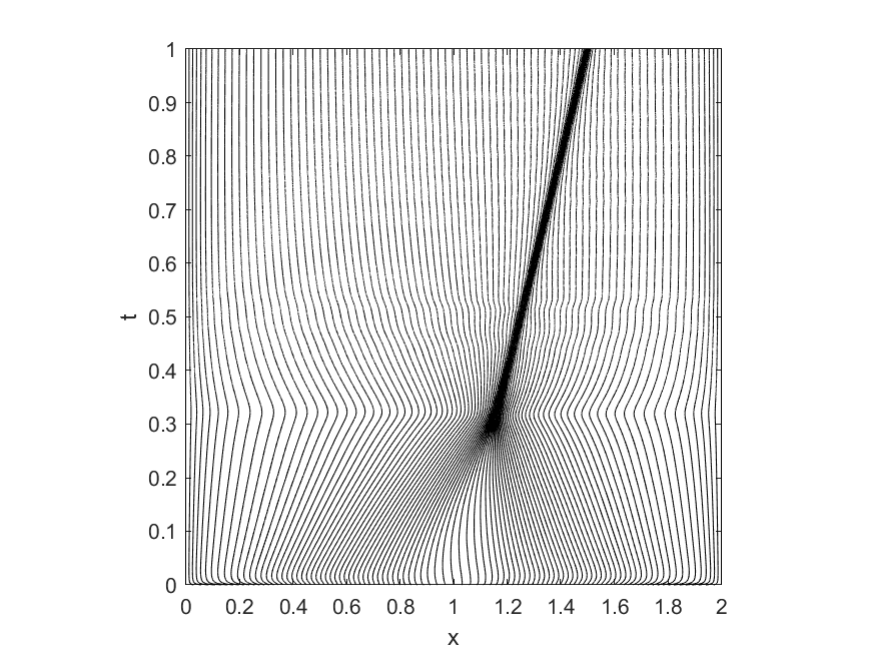}}
\subfigure[$P^1$-DG: solution $u$]{
\includegraphics[width=0.31\textwidth,trim=25 5 45 20,clip]{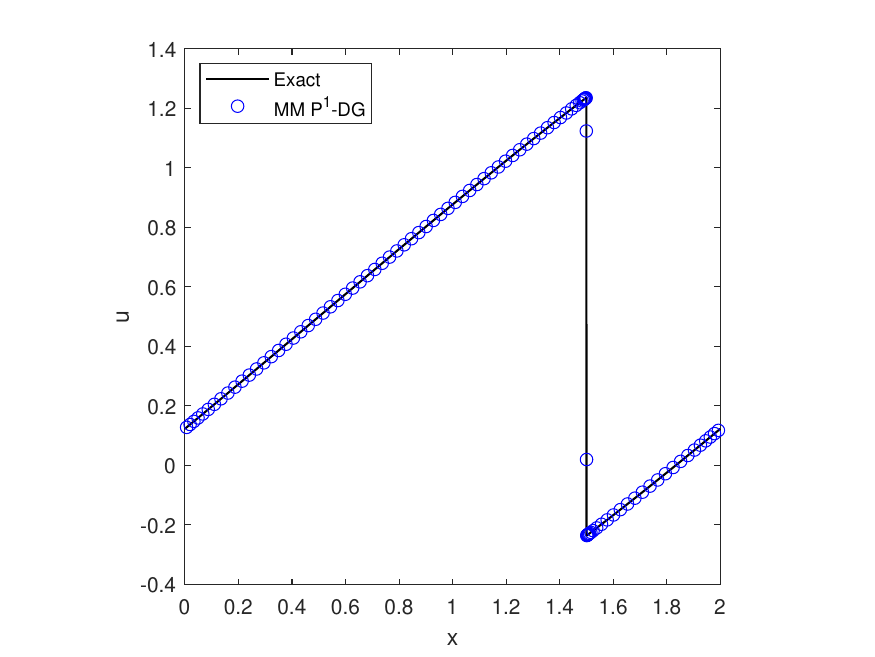}}
\subfigure[$P^2$-DG: solution $u$]{
\includegraphics[width=0.31\textwidth,trim=25 5 45 20,clip]{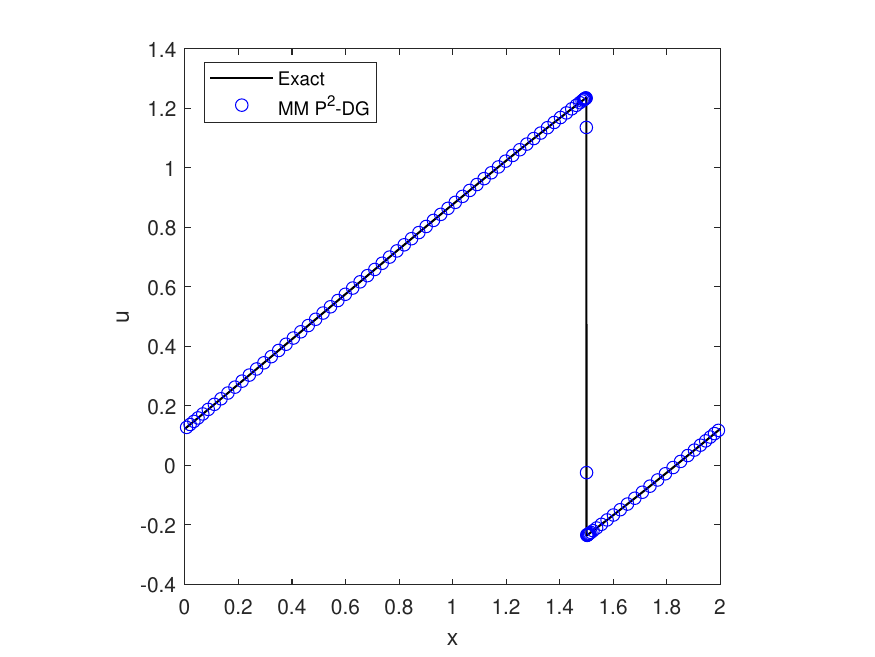}}
\subfigure[$P^3$-DG: solution $u$]{
\includegraphics[width=0.31\textwidth,trim=25 5 45 20,clip]{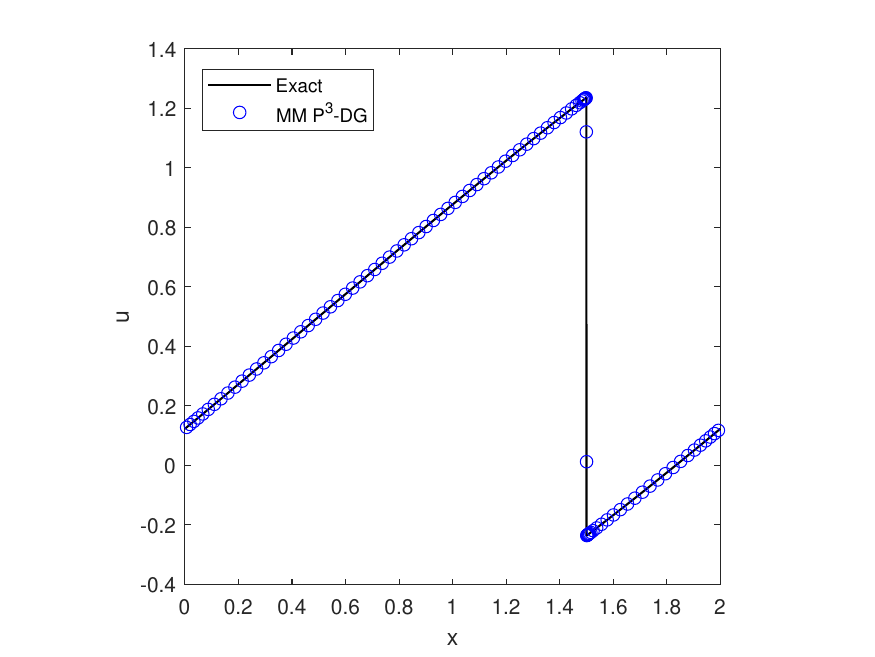}}
\subfigure[$P^1$-DG: $\Delta t$]{
\includegraphics[width=0.31\textwidth,trim=25 5 45 20,clip]{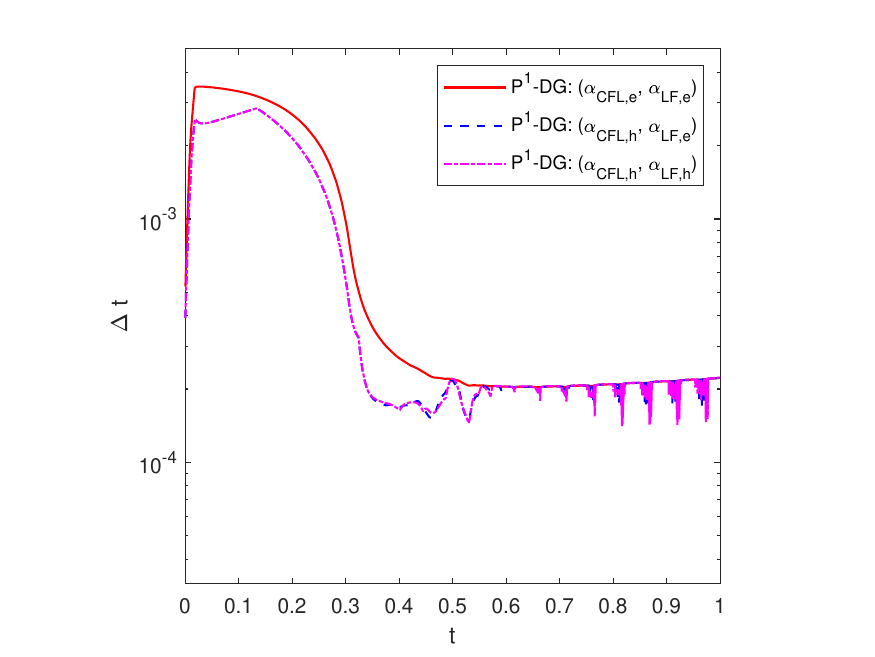}}
\subfigure[$P^2$-DG: $\Delta t$]{
\includegraphics[width=0.31\textwidth,trim=25 5 45 20,clip]{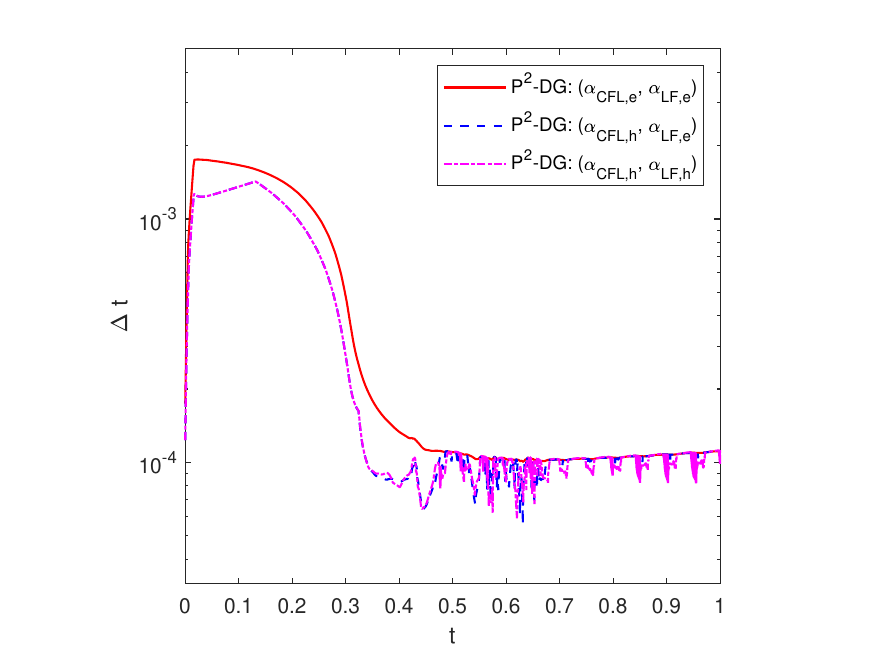}}
\subfigure[$P^2$-DG: $\Delta t$]{
\includegraphics[width=0.31\textwidth,trim=25 5 45 20,clip]{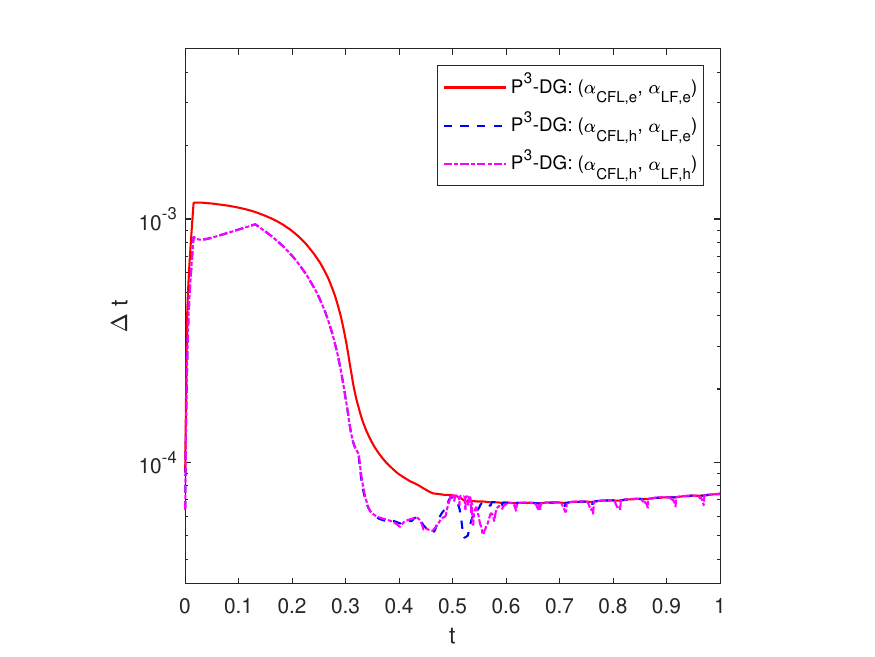}}
\caption{
Example \ref{Burgers-1d}. The mesh trajectories, solution, and time step-size are obtained with the $P^k$-DG method ($k=1,2,3$) and a moving mesh of $N=100$.}
\label{Fig:Burgers-1d-Pk}
\end{figure}

\begin{figure}[H]
\centering
\subfigure[$P^1$-DG: mesh trajectories]{
\includegraphics[width=0.31\textwidth,trim=25 5 45 20,clip]{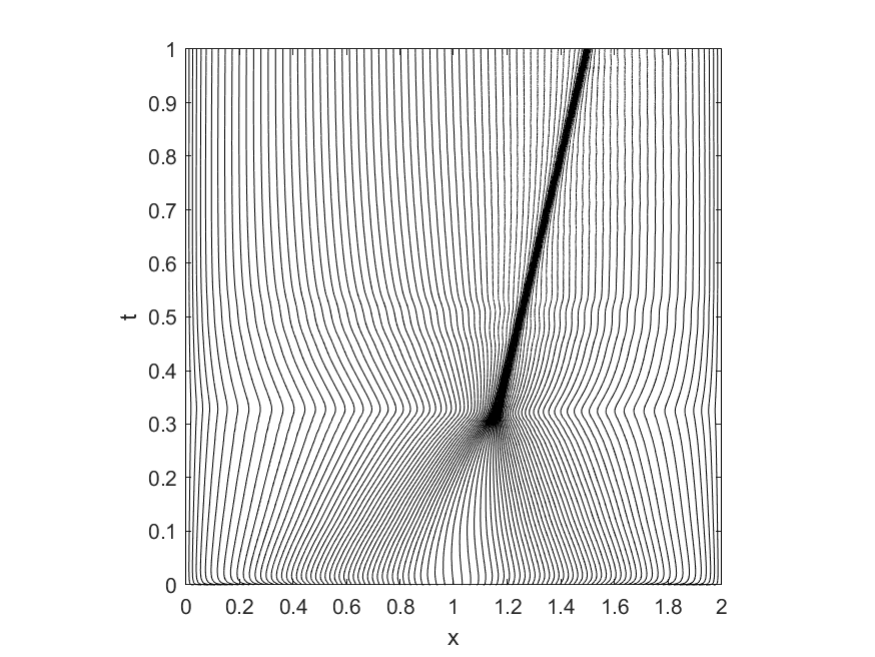}}
\subfigure[$P^2$-DG: mesh trajectories]{
\includegraphics[width=0.31\textwidth,trim=25 5 45 20,clip]{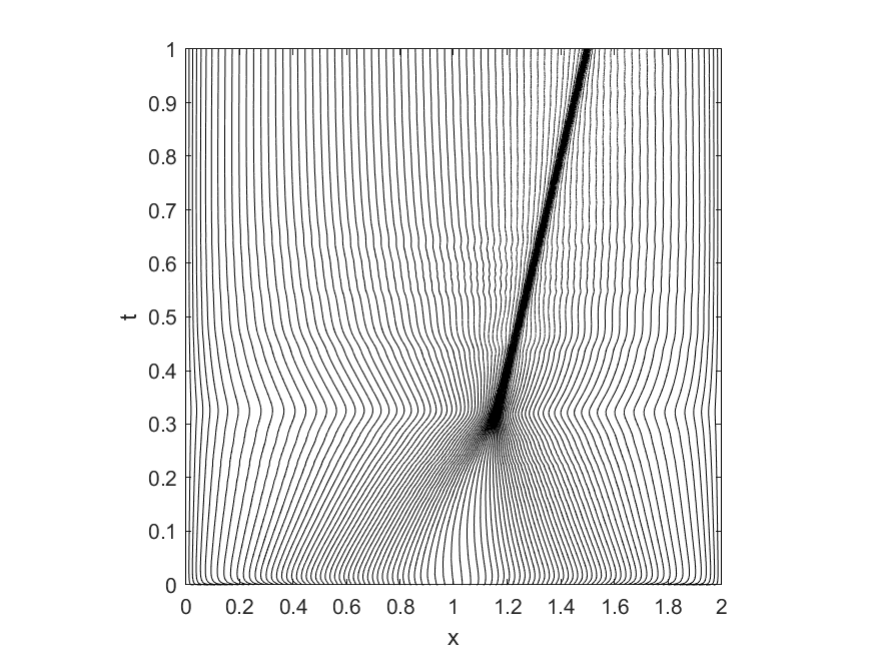}}
\subfigure[$P^3$-DG: mesh trajectories]{
\includegraphics[width=0.31\textwidth,trim=25 5 45 20,clip]{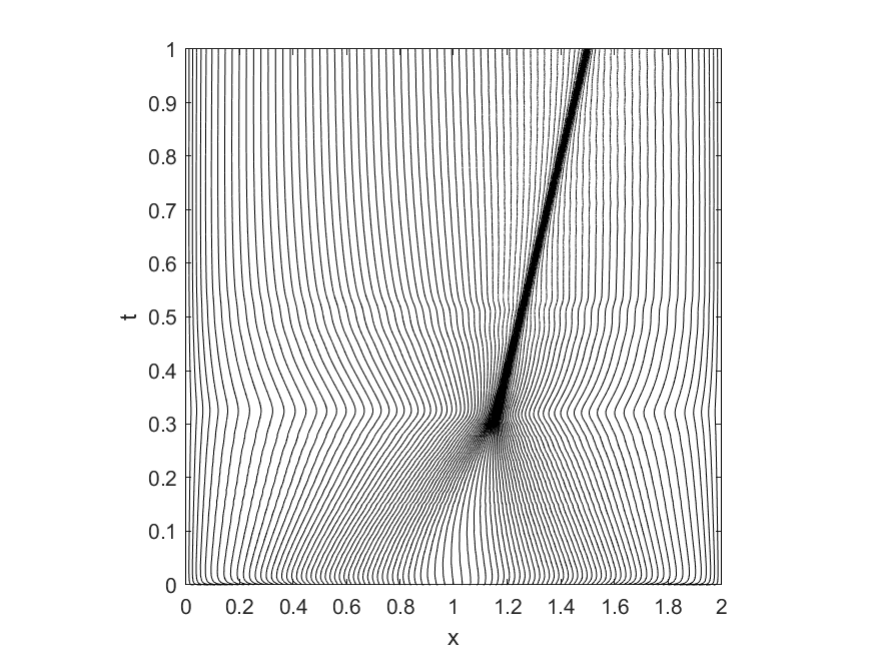}}
\subfigure[$P^1$-DG: solution $u$]{
\includegraphics[width=0.31\textwidth,trim=25 5 45 20,clip]{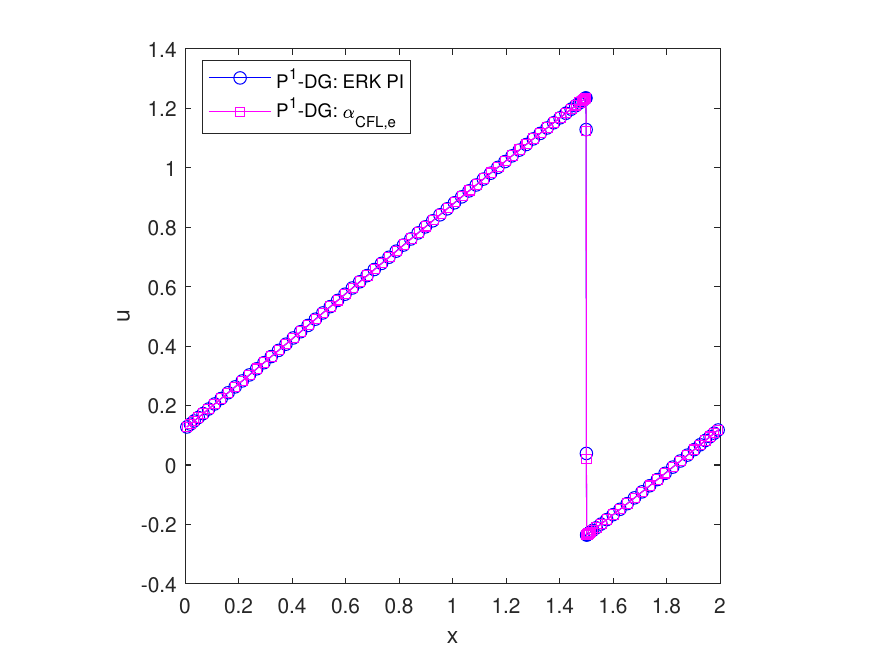}}
\subfigure[$P^2$-DG: solution $u$]{
\includegraphics[width=0.31\textwidth,trim=25 5 45 20,clip]{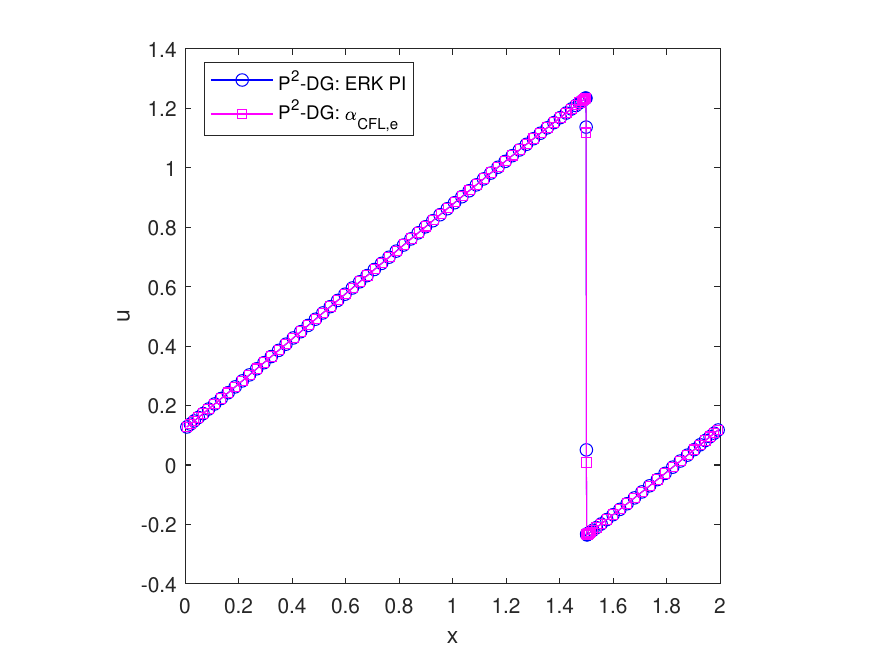}}
\subfigure[$P^3$-DG: solution $u$]{
\includegraphics[width=0.31\textwidth,trim=25 5 45 20,clip]{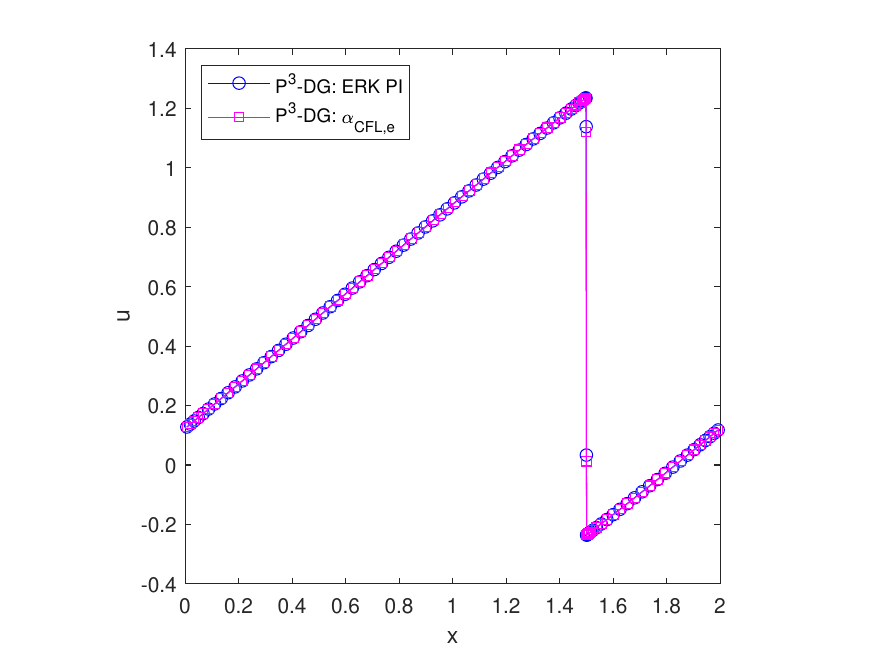}}
\subfigure[$P^1$-DG: $\Delta t$]{
\includegraphics[width=0.31\textwidth,trim=25 5 45 20,clip]{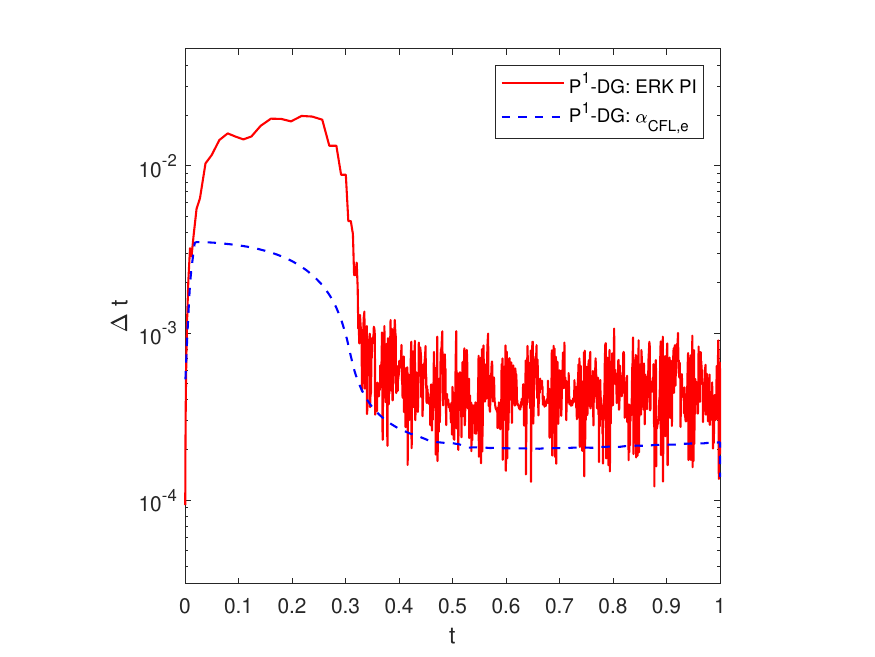}}
\subfigure[$P^2$-DG: $\Delta t$]{
\includegraphics[width=0.31\textwidth,trim=25 5 45 20,clip]{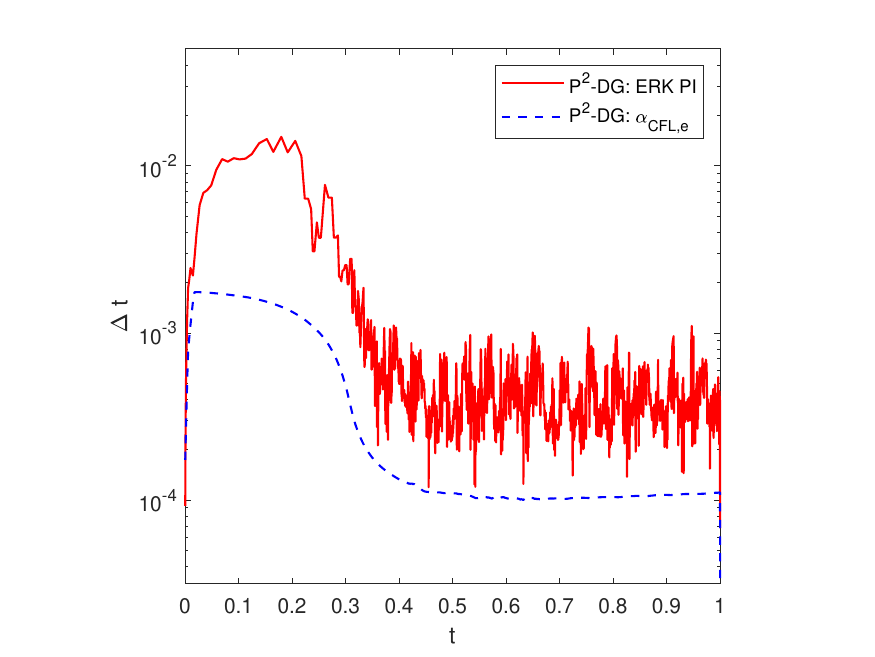}}
\subfigure[$P^3$-DG: $\Delta t$]{
\includegraphics[width=0.31\textwidth,trim=25 5 45 20,clip]{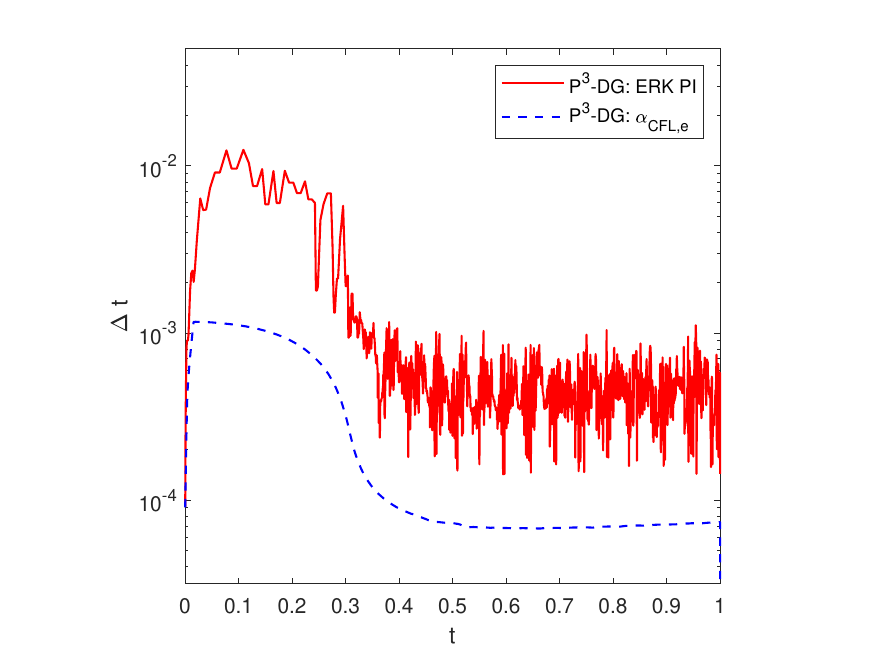}}
\caption{
Example \ref{Burgers-1d}. The mesh trajectories, solution, and time step-size are obtained with the $P^k$-DG method ($k=1,2,3$) and with ERK PI and CFL ($\alpha_{CFL,e}$, $\alpha_{LF,e}$) step-size selection strategies ($N=100$). The mesh trajectories
and solutions are almost indistinguishable for both strategies.}
\label{Fig:Burgers-1d-Pk-ERK}
\end{figure}

\begin{figure}[H]
\centering
\subfigure[$P^1$-DG: $(\alpha_{CFL,e},\alpha_{LF,e})$]{
\includegraphics[width=0.31\textwidth,trim=25 5 45 20,clip]{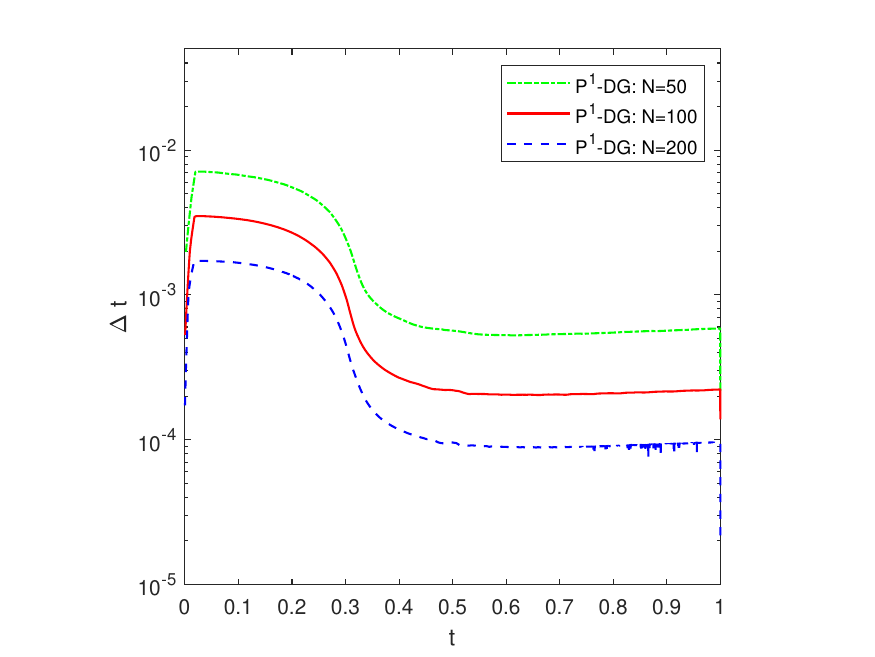}}
\subfigure[$P^2$-DG: $(\alpha_{CFL,e},\alpha_{LF,e})$]{
\includegraphics[width=0.31\textwidth,trim=25 5 45 20,clip]{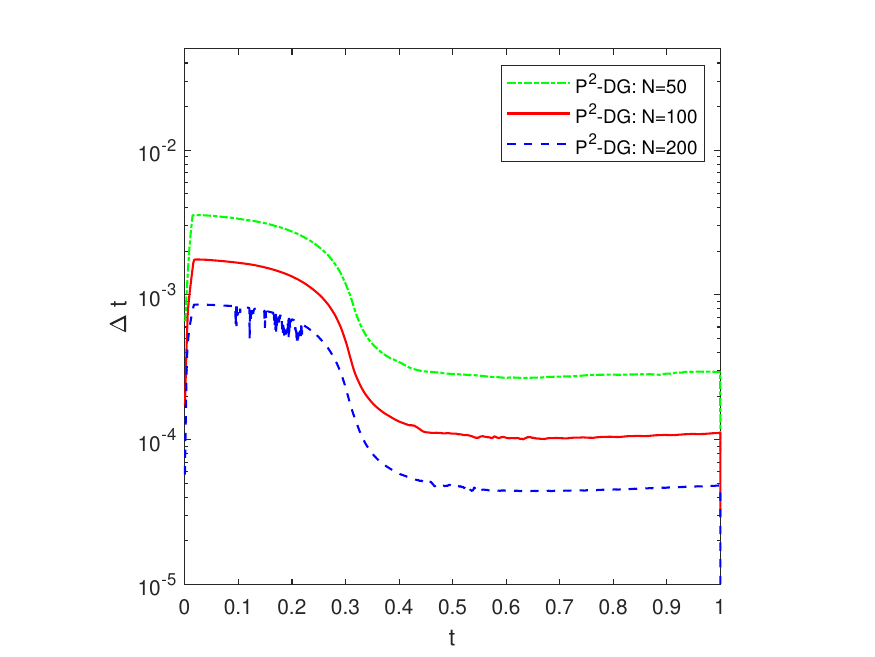}}
\subfigure[$P^3$-DG: $(\alpha_{CFL,e},\alpha_{LF,e})$]{
\includegraphics[width=0.31\textwidth,trim=25 5 45 20,clip]{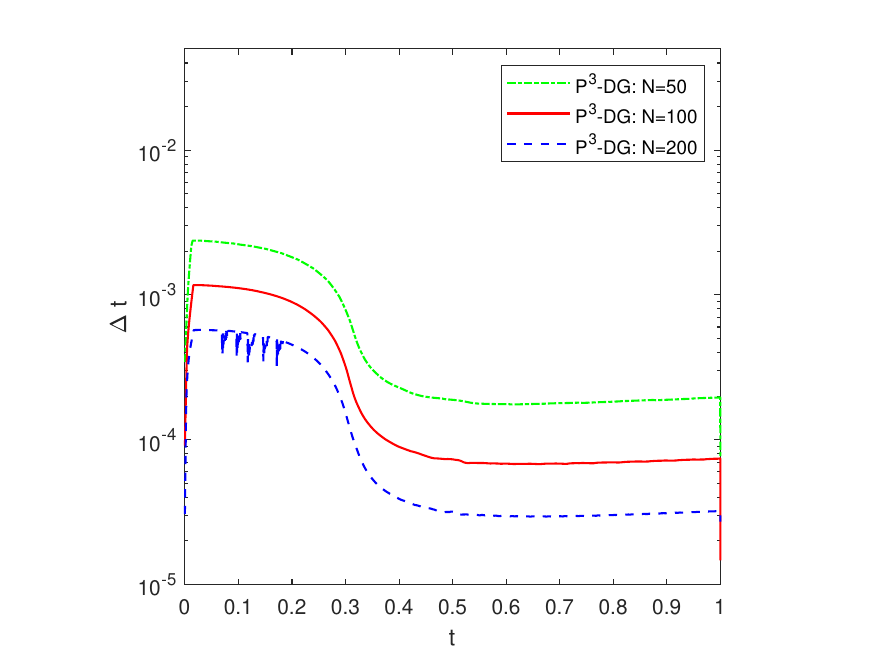}}
\subfigure[$P^1$-DG: ERK PI]{
\includegraphics[width=0.31\textwidth,trim=25 5 45 20,clip]{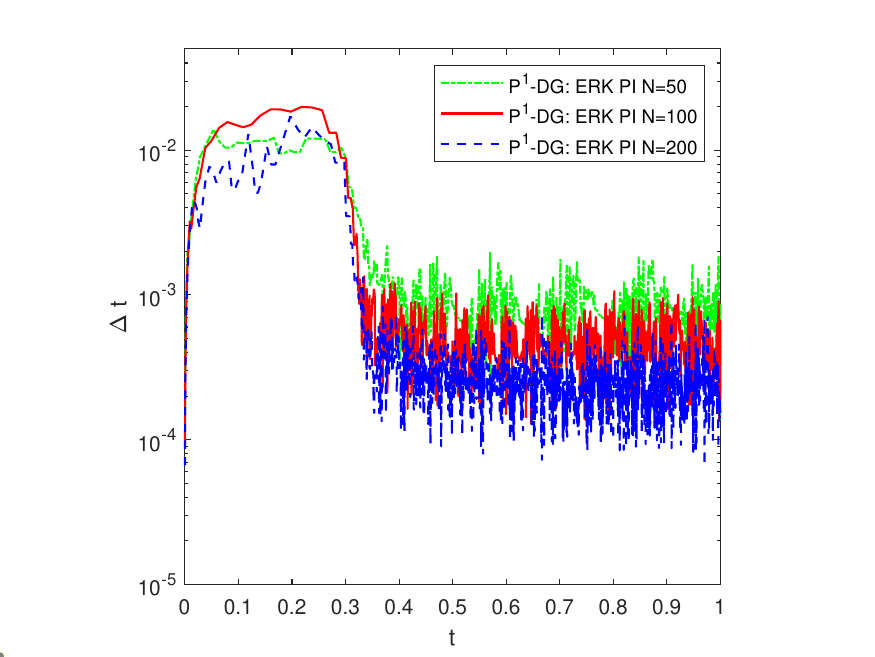}}
\subfigure[$P^2$-DG: ERK PI]{
\includegraphics[width=0.31\textwidth,trim=25 5 45 20,clip]{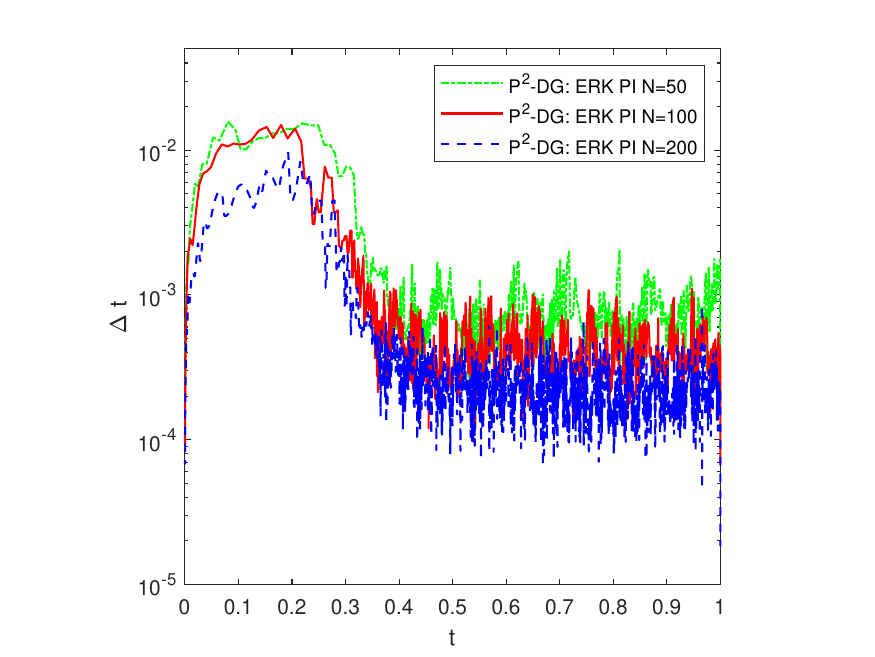}}
\subfigure[$P^3$-DG: ERK PI]{
\includegraphics[width=0.31\textwidth,trim=25 5 45 20,clip]{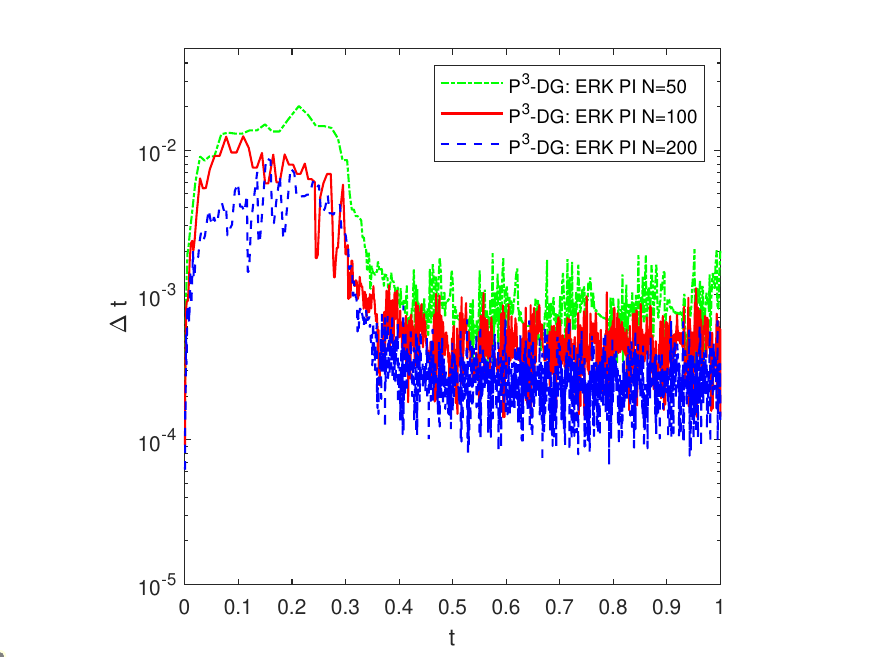}}
\caption{
Example \ref{Burgers-1d}. The time step-size is obtained with the $P^k$-DG method ($k=1,2,3$) and the moving meshes of $N=50,\,100,\, 200$, and with ERK PI and CFL ($\alpha_{CFL,e}$, $\alpha_{LF,e}$) time step-size selection strategies.}
\label{Fig:Burgers-1d-Pk-ERK-dt}
\end{figure}

\begin{figure}[H]
\centering
\includegraphics[width=0.4\textwidth,trim=25 5 45 0,clip]{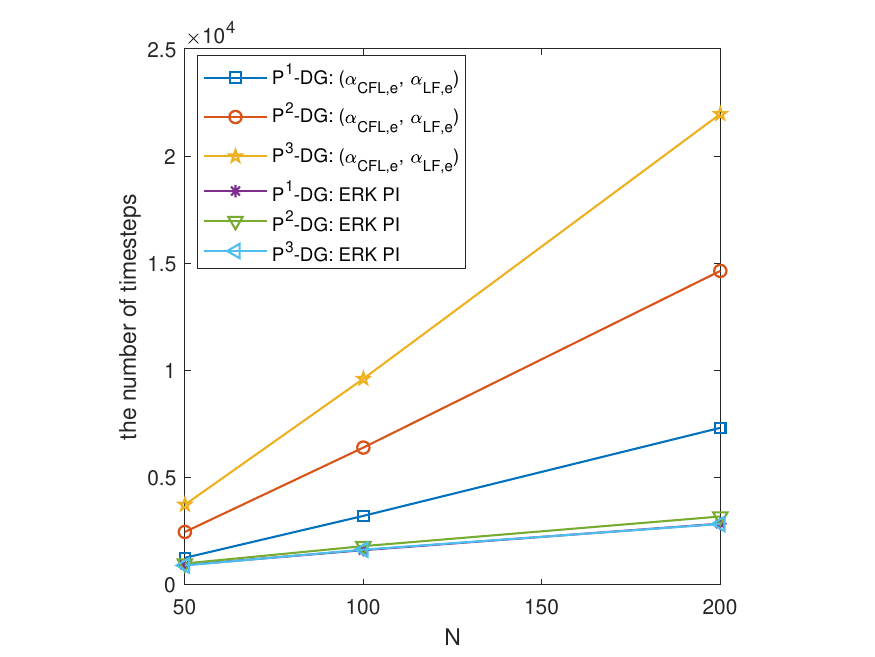}
\caption{
Example \ref{Burgers-1d}. The number of time steps is obtained with the $P^k$-DG method ($k=1,2,3$) and the moving meshes of $N=50,\,100,\, 200$, and with ERK PI and CFL ($\alpha_{CFL,e}$, $\alpha_{LF,e}$) time step-size selection strategies.}
\label{Fig:Burgers-1d-Pk-Nsteps}
\end{figure}

\begin{example}\label{Sod}
(Sod shock tube problem for 1D Euler equations)
\end{example}
We consider the Sod problem of the Euler equations,
\begin{equation}
\label{Euler-Eq}
\frac{\partial}{\partial t} \begin{pmatrix} \rho \\ \rho u \\ E \end{pmatrix}
 + \frac{\partial}{\partial x} \begin{pmatrix} \rho u \\ \rho u^2 + P \\ u (E + P) \end{pmatrix} = 0,
\quad x \in (-5, 5)
\end{equation}
where $\rho$ is the density, $u$ is the velocity, $E$ is the energy density, and $P$ is the pressure.
The equation of the state is $E = P/(\gamma - 1) + \rho u^2/2$ with $\gamma = 1.4$.
The initial conditions are given by
\begin{align}
\label{sod-initial}
(\rho,u,P) =
\begin{cases}
(1,~0,~1), \quad & x\leq0 \\
(0.125,~0,~0.1), \quad & x>0.\\
\end{cases}
\end{align}
The computation is stopped at $T = 2$.
The solution of the problem contains a shock wave, a rarefaction, and a contact discontinuity.

The mesh trajectories, density, and time step-size obtained with the $P^k$-DG method ($k=1,2,3$)
and a moving mesh of $N=200$ are shown in Figs.~\ref{Fig:Sod-Pk}.
We can see that the mesh points are concentrated correctly around the shock wave, rarefaction, and contact discontinuity.
The computation is stable for all three choices of $\alpha$. Similar observations can be made for $\Delta t$
as in the previous example except for the case with $P^2$-DG where ($\alpha_{CFL,h}$, $\alpha_{LF,e}$)
leads to larger $\Delta t$ for $0.2 < t < 0.8$ than ($\alpha_{CFL,e}$, $\alpha_{LF,e}$).
To explain this, we notice that different CFL conditions can lead to different $\Delta t$, which can affect
the mesh adaptation and the time integration of the physical equations. These effects do not seem significant
in the previous example and in other cases in this example. However, they are slightly more significant in this case.
From Fig.~\ref{Fig:Sod-dx}, we can see that ($\alpha_{CFL,h}$, $\alpha_{LF,e}$) results in a slightly larger
minimum mesh spacing, which in turns gives slightly larger $\Delta t$.

Once again, the results show that $\Delta t$ associated with ($\alpha_{CFL,h}$, $\alpha_{LF,e}$)
and ($\alpha_{CFL,h}$, $\alpha_{LF,h}$) has large oscillations than that associated with
($\alpha_{CFL,e}$, $\alpha_{LF,e}$).

It is worth mentioning that we have tried ($\alpha_{CFL,e}$, $\alpha_{LF,h}$), which violates (\ref{alpha-0}), in the computation with moving mesh $P^1$-DG. The computation stops at around $t =0.0011$ when $\Delta t$ becomes machine-precision.
Similar unstable computations have also been observed for this choice for other $P^k$ elements.

The results obtained with ERK PI (SSP-ERK(4,3)) and CFL ($\alpha_{CFL,e}$, $\alpha_{LF,e}$) time step-size selection
strategies are shown in Fig.~\ref{Fig:Sod-Pk-ERK} for comparison purpose. As Example \ref{Burgers-1d}, we can see that
both selection strategies lead to stable computation and almost identical solutions and mesh trajectories.
$\Delta t$ associated with ERK PI is slightly larger than that associated with CFL ($\alpha_{CFL,e}$, $\alpha_{LF,e}$),
and this is especially true and the difference in $\Delta t$ for both is larger for higher-order DG methods.

\begin{figure}[H]
\centering
\subfigure[$P^1$-DG: mesh trajectories]{
\includegraphics[width=0.31\textwidth,trim=25 5 45 18,clip]{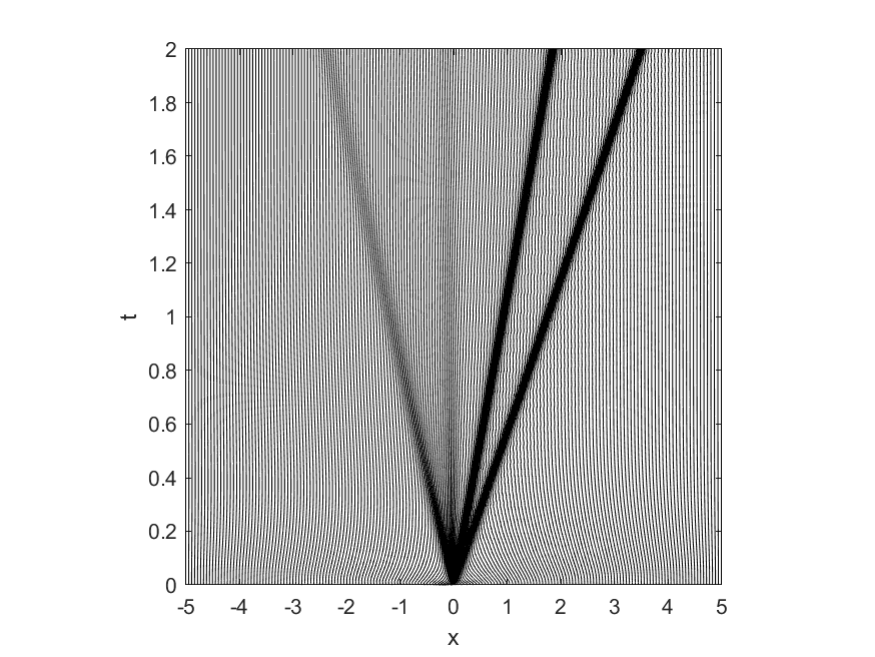}}
\subfigure[$P^2$-DG: mesh trajectories]{
\includegraphics[width=0.31\textwidth,trim=25 5 45 18,clip]{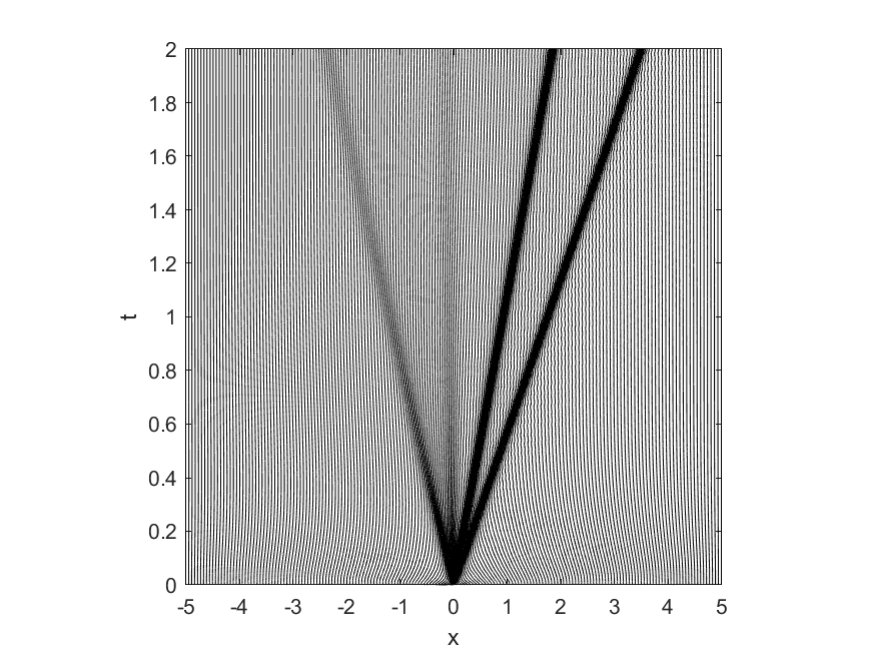}}
\subfigure[$P^3$-DG: mesh trajectories]{
\includegraphics[width=0.31\textwidth,trim=25 5 45 18,clip]{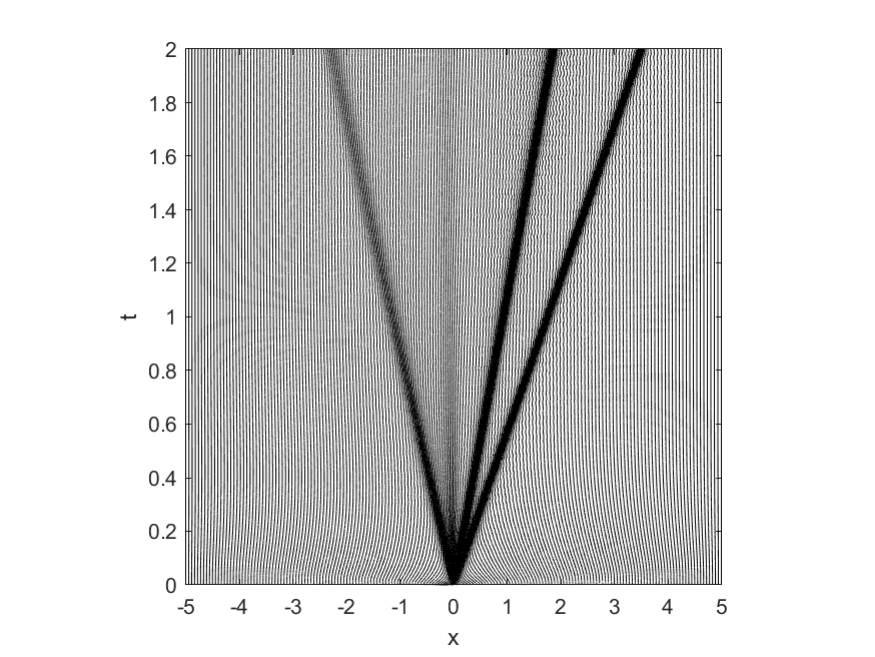}}
\subfigure[$P^1$-DG: density $\rho$]{
\includegraphics[width=0.31\textwidth,trim=25 5 45 18,clip]{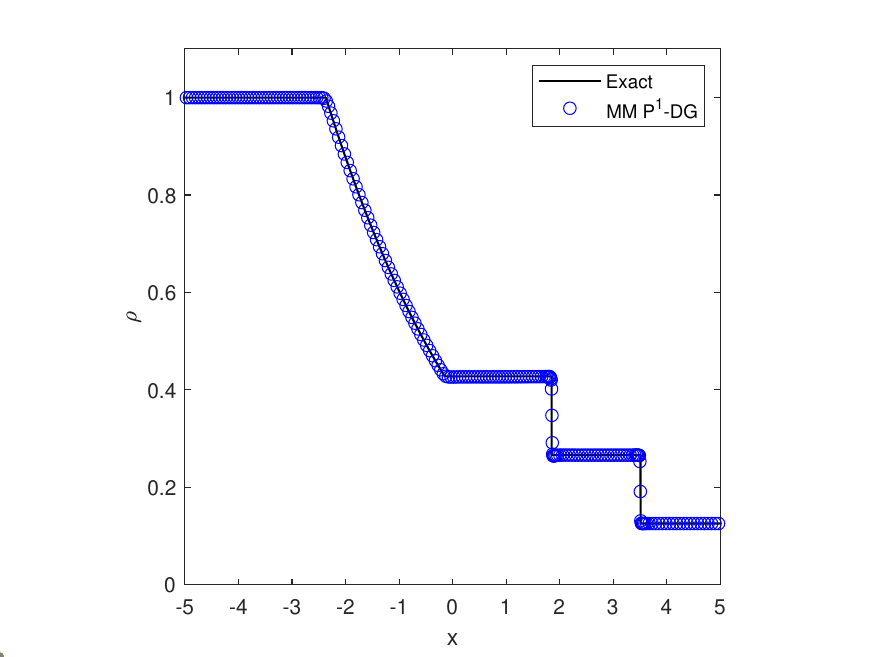}}
\subfigure[$P^2$-DG: density $\rho$]{
\includegraphics[width=0.31\textwidth,trim=25 5 45 18,clip]{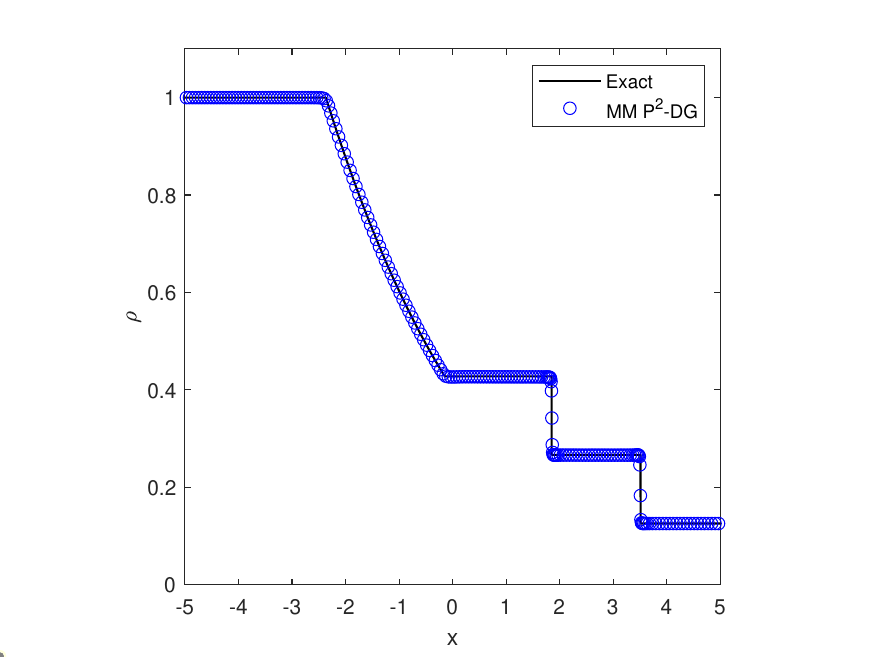}}
\subfigure[$P^3$-DG: density $\rho$]{
\includegraphics[width=0.31\textwidth,trim=25 5 45 18,clip]{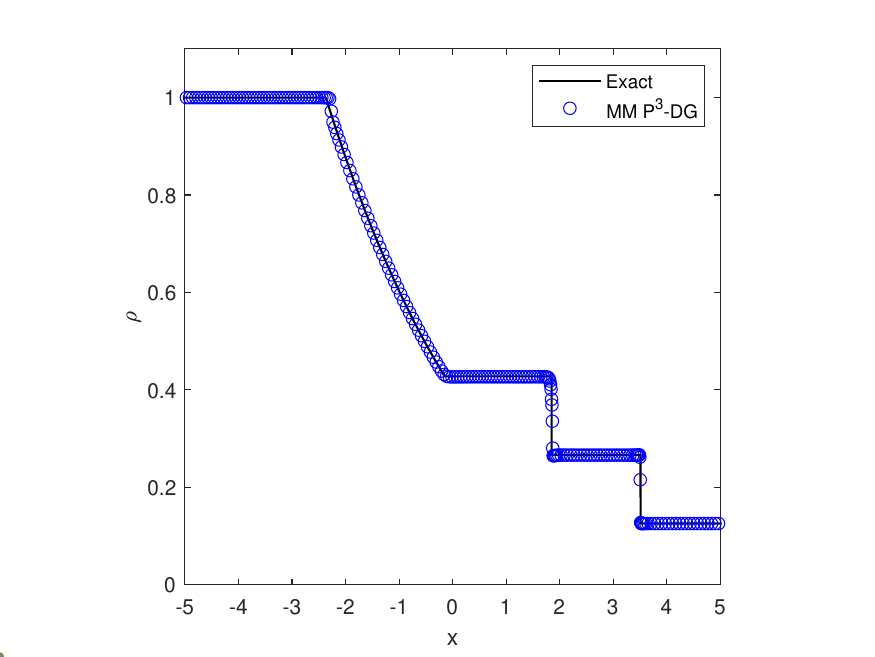}}
\subfigure[$P^1$-DG: $\Delta t$]{
\includegraphics[width=0.31\textwidth,trim=25 5 45 18,clip]{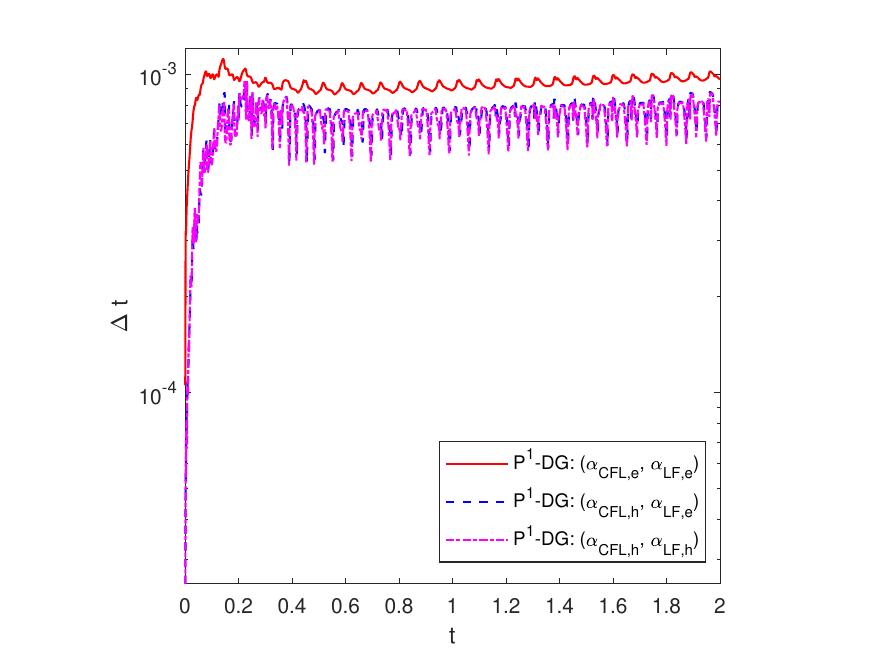}}
\subfigure[$P^2$-DG: $\Delta t$]{
\includegraphics[width=0.31\textwidth,trim=25 5 45 18,clip]{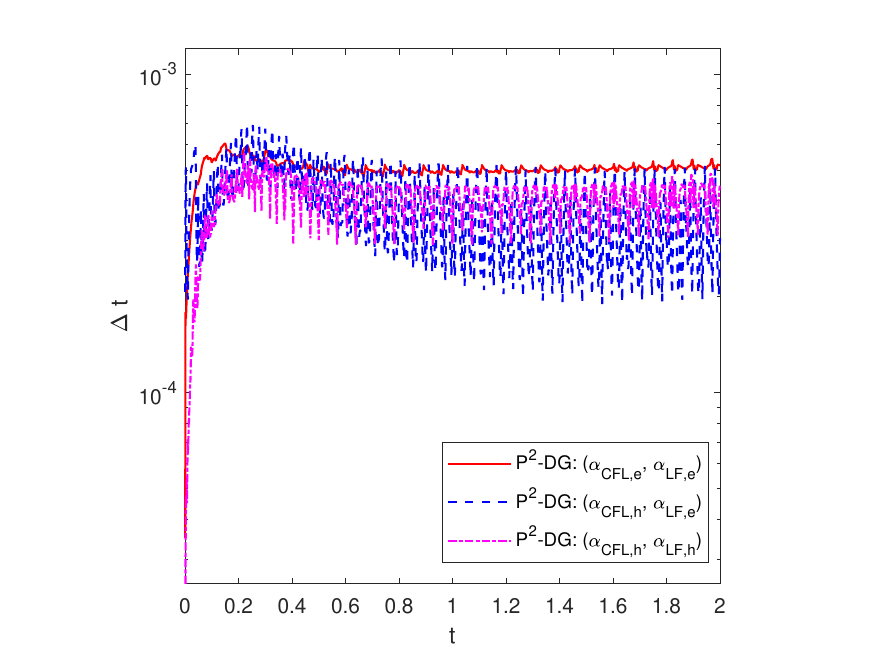}}
\subfigure[$P^3$-DG: $\Delta t$]{
\includegraphics[width=0.31\textwidth,trim=25 5 45 18,clip]{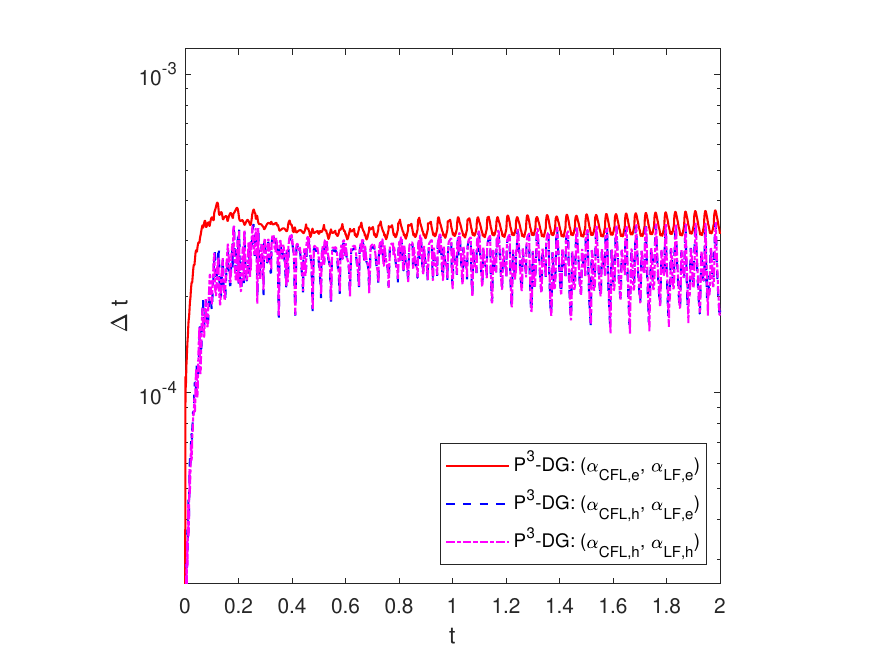}}
\caption{Example \ref{Sod}.
The mesh trajectories, solution, and time step-size are obtained with the $P^k$-DG method ($k=1,2,3$) and a moving mesh of $N=200$.}
\label{Fig:Sod-Pk}
\end{figure}

\begin{figure}[H]
\centering
\subfigure[$P^1$-DG]{
\includegraphics[width=0.31\textwidth,trim=25 5 45 10,clip]{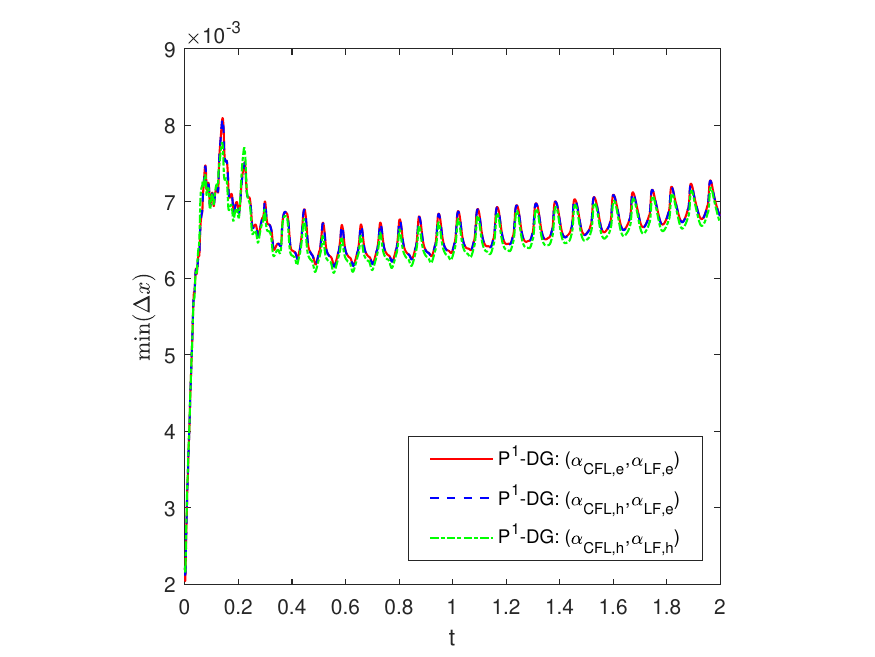}}
\subfigure[$P^2$-DG]{
\includegraphics[width=0.31\textwidth,trim=25 5 45 10,clip]{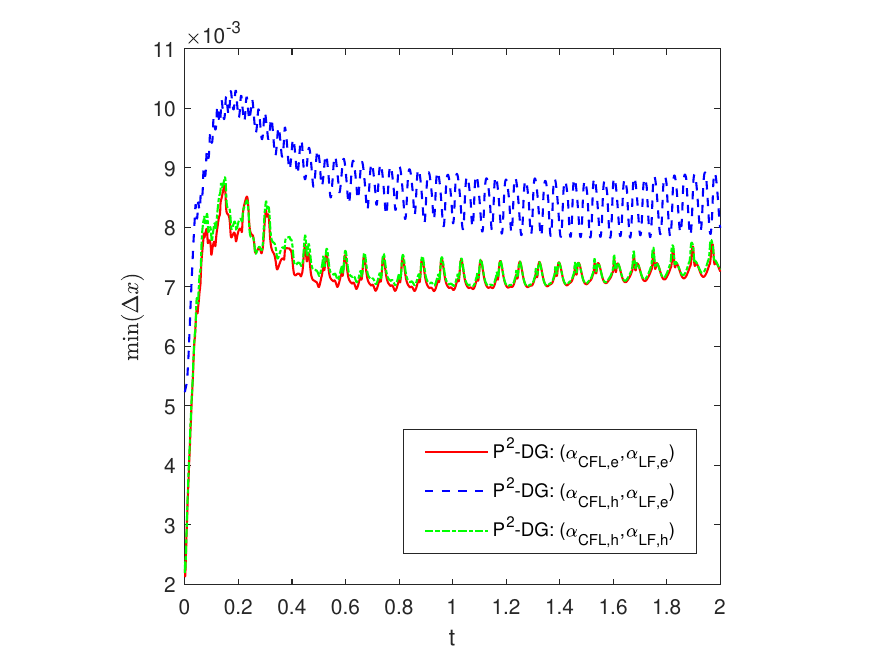}}
\subfigure[$P^3$-DG]{
\includegraphics[width=0.31\textwidth,trim=25 5 45 10,clip]{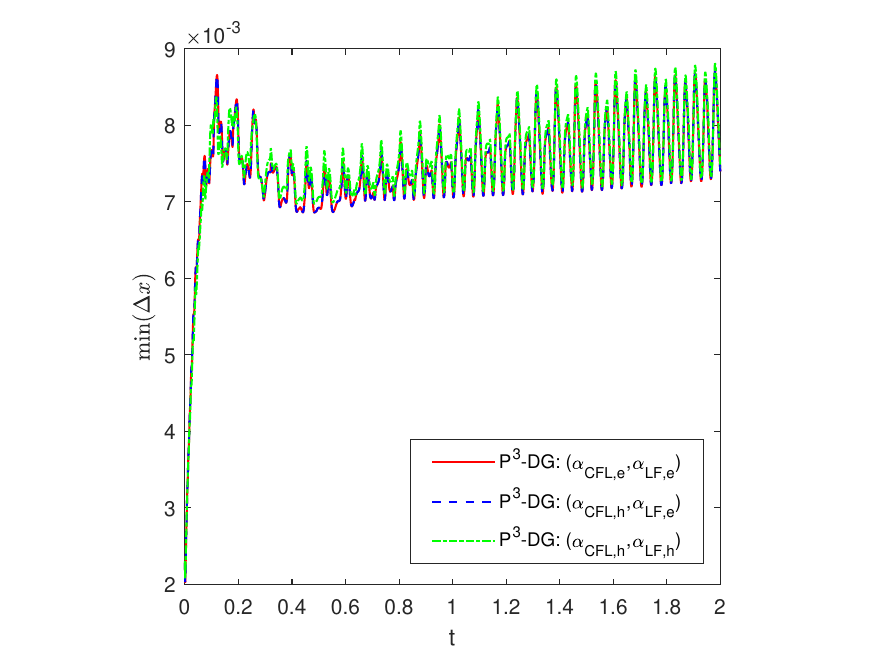}}
\caption{Example \ref{Sod}. The evolution of the minimal mesh spacing $\min(\Delta x)$
with the time obtained with the $P^k$-DG method ($k=1,2,3$) and a moving mesh of $N=200$.}
\label{Fig:Sod-dx}
\end{figure}

\begin{figure}[H]
\centering
\subfigure[$P^1$-DG: mesh trajectories]{
\includegraphics[width=0.31\textwidth,trim=25 5 45 18,clip]{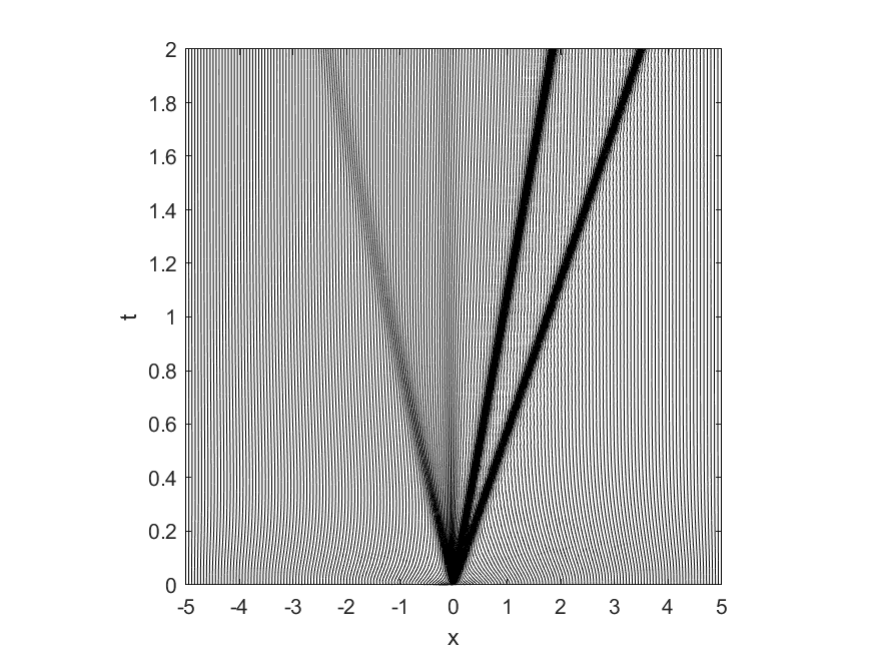}}
\subfigure[$P^2$-DG: mesh trajectories]{
\includegraphics[width=0.31\textwidth,trim=25 5 45 18,clip]{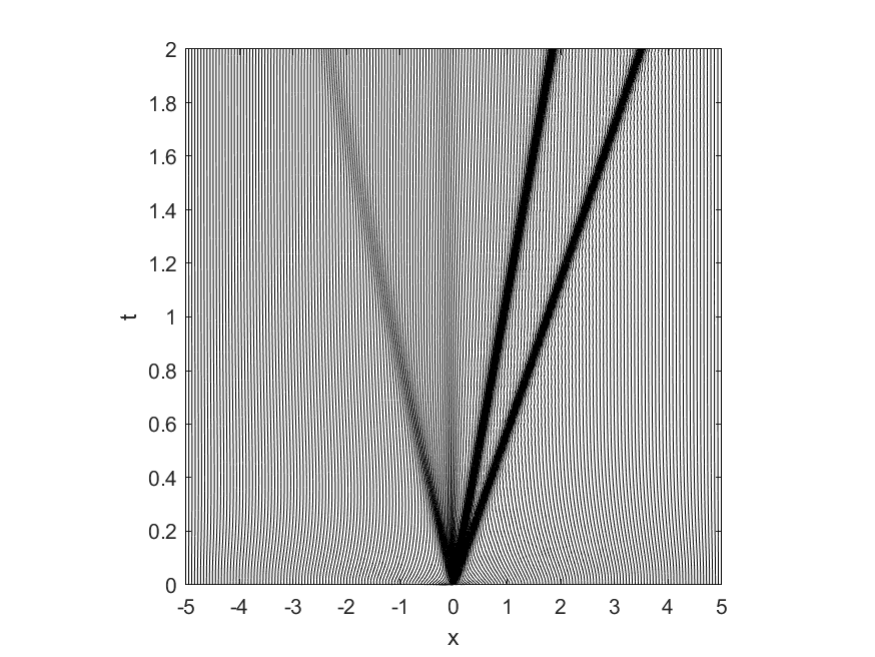}}
\subfigure[$P^3$-DG: mesh trajectories]{
\includegraphics[width=0.31\textwidth,trim=25 5 45 18,clip]{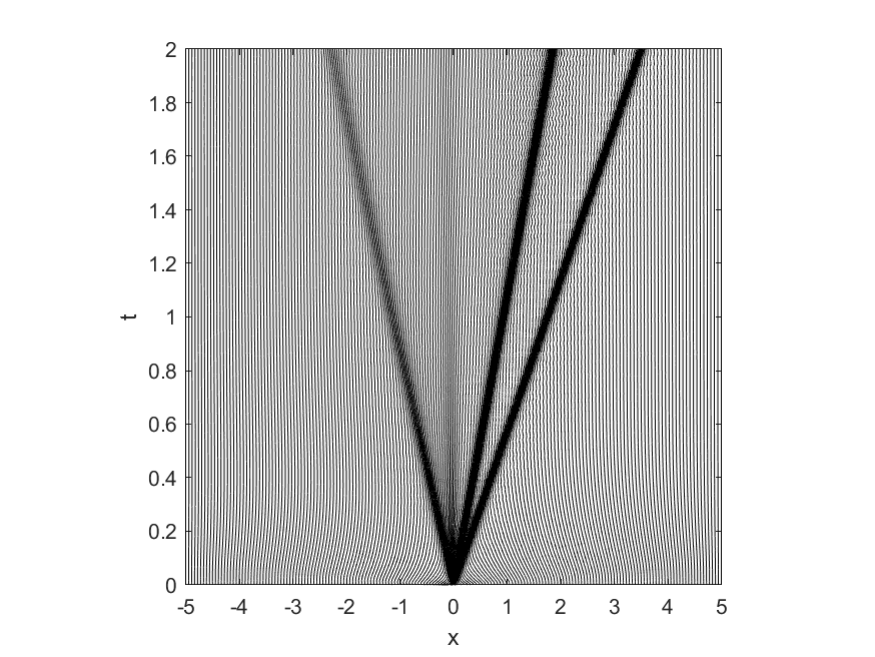}}
\subfigure[$P^1$-DG: density $\rho$]{
\includegraphics[width=0.31\textwidth,trim=25 5 45 18,clip]{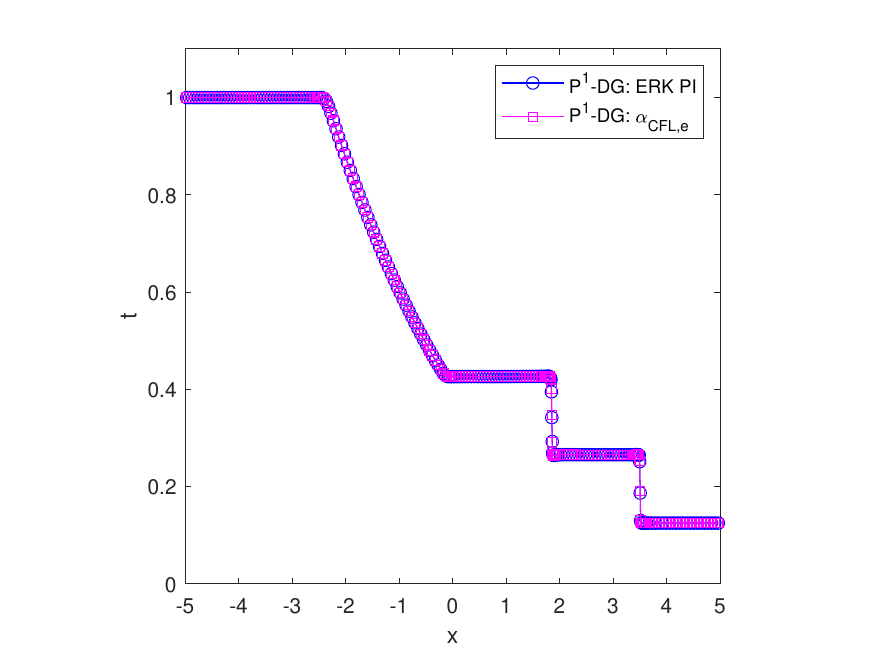}}
\subfigure[$P^2$-DG: density $\rho$]{
\includegraphics[width=0.31\textwidth,trim=25 5 45 18,clip]{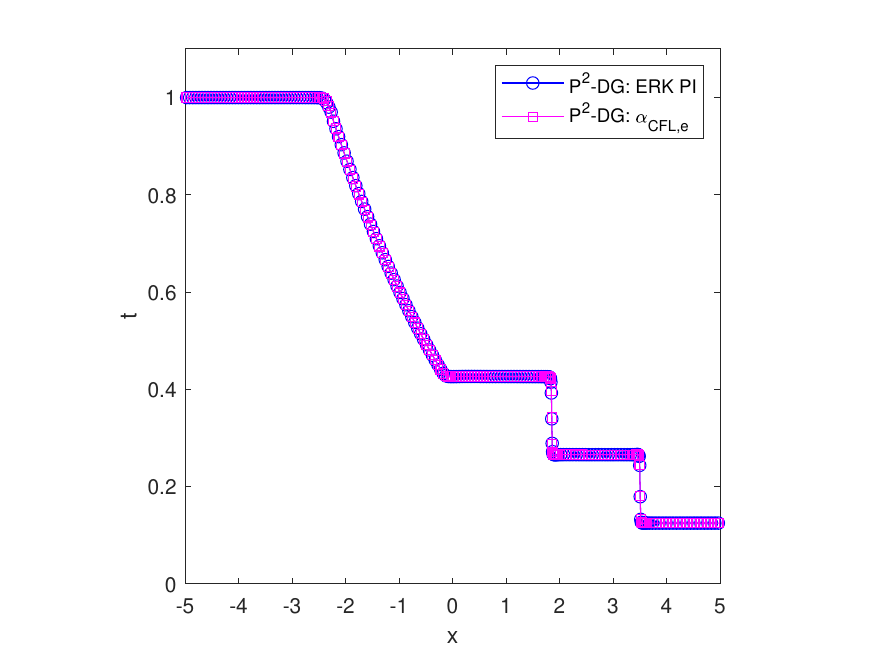}}
\subfigure[$P^3$-DG: density $\rho$]{
\includegraphics[width=0.31\textwidth,trim=25 5 45 18,clip]{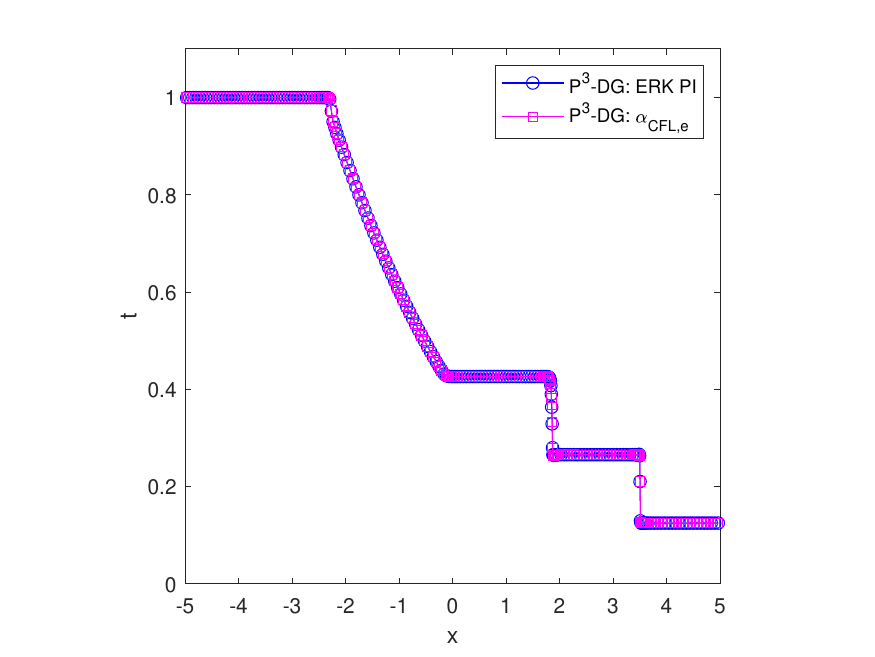}}
\subfigure[$P^1$-DG: $\Delta t$]{
\includegraphics[width=0.31\textwidth,trim=25 5 45 18,clip]{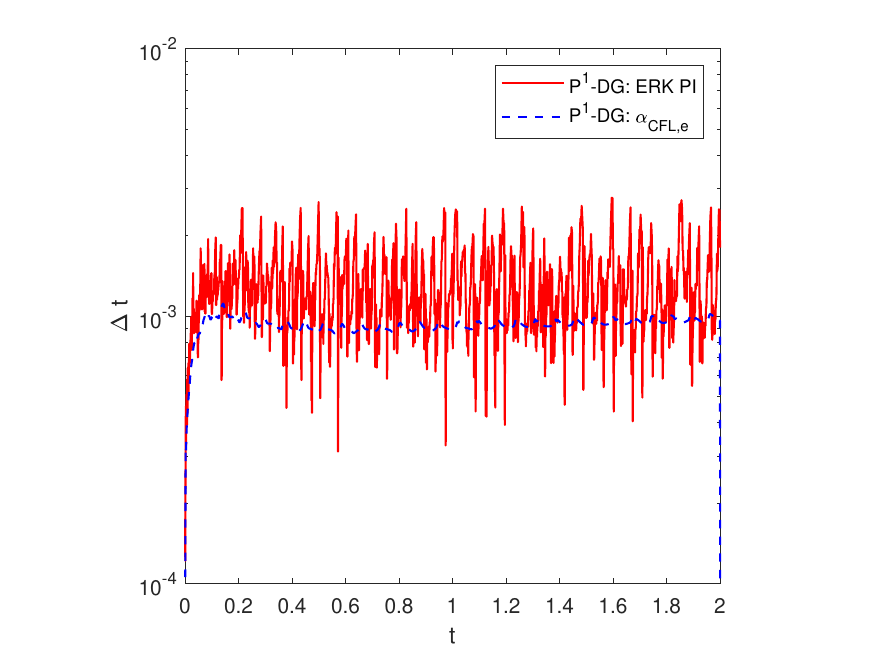}}
\subfigure[$P^2$-DG: $\Delta t$]{
\includegraphics[width=0.31\textwidth,trim=25 5 45 18,clip]{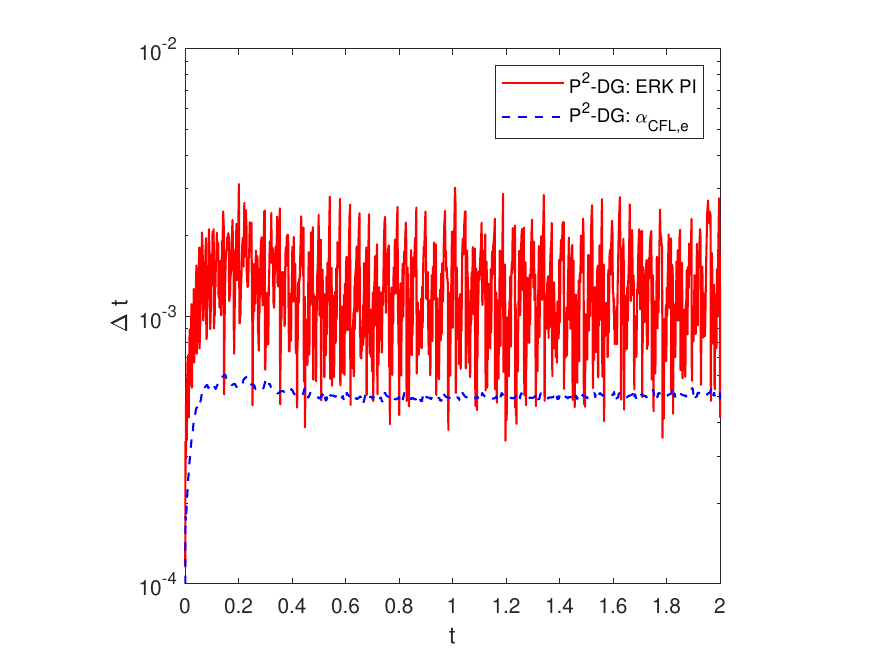}}
\subfigure[$P^3$-DG: $\Delta t$]{
\includegraphics[width=0.31\textwidth,trim=25 5 45 18,clip]{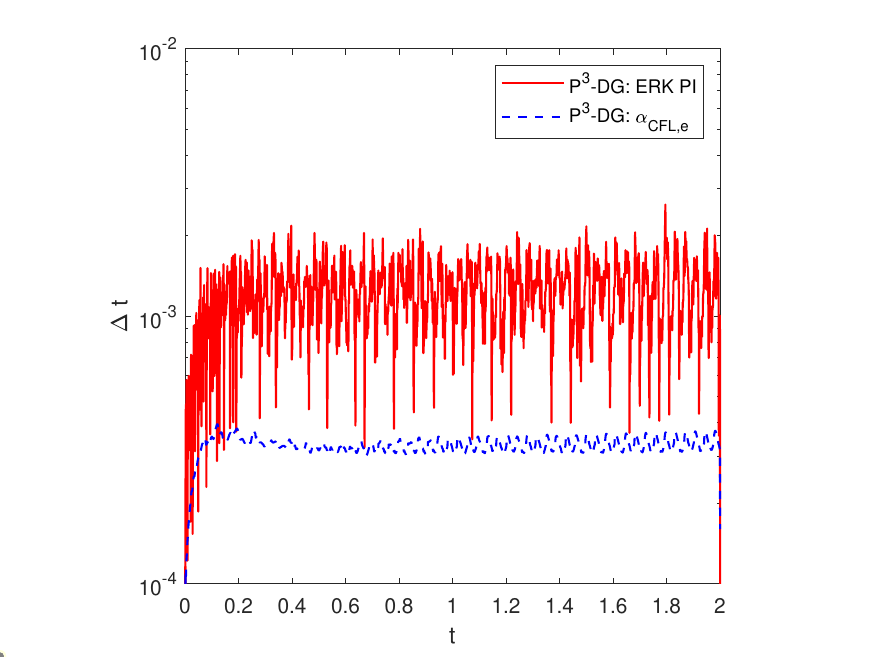}}
\caption{
Example \ref{Sod}. The mesh trajectories, solution, and time step-size are obtained with the $P^k$-DG method ($k=1,2,3$)
and with ERK PI and CFL  ($\alpha_{CFL,e}$, $\alpha_{LF,e}$) step-size selection strategies ($N=200$). The mesh trajectories
and solutions are almost indistinguishable for both strategies.}
\label{Fig:Sod-Pk-ERK}
\end{figure}

\begin{example}\label{Lax}
(Lax problem for 1D Euler equations)
\end{example}
In this example, we consider the Lax problem of the Euler equations (\ref{Euler-Eq})
with the following initial conditions
\begin{align}\label{Lax-initial}
(\rho,u,P) =
\begin{cases}
(0.445,~0.698,~3.528), \quad & x\leq0\\
(0.5, ~0, ~0.571), \quad & x>0 .
\end{cases}
\end{align}
The final time is $T = 1.3$.
The mesh trajectories, density, and time step-size $\Delta t$ obtained with the $P^k$-DG method ($k=1,2,3$) and a moving mesh of $N=200$ are shown in Fig.~\ref{Fig:Lax-Pk}.
In this example (and following examples), we show results only for two choices of $\alpha$,
($\alpha_{CFL,e}$, $\alpha_{LF,e}$) and ($\alpha_{CFL,h}$, $\alpha_{LF,e}$),
since ($\alpha_{CFL,h}$, $\alpha_{LF,e}$) and ($\alpha_{CFL,h}$, $\alpha_{LF,h}$)
produce almost identical results. The results show that the computation is stable and $\Delta t$ associated with ($\alpha_{CFL,e}$, $\alpha_{LF,e}$) is slightly larger and has small oscillations than that associated
with ($\alpha_{CFL,h}$, $\alpha_{LF,e}$).
\begin{figure}[H]
\centering
\subfigure[$P^1$-DG: mesh trajectories]{
\includegraphics[width=0.31\textwidth,trim=25 5 40 18,clip]{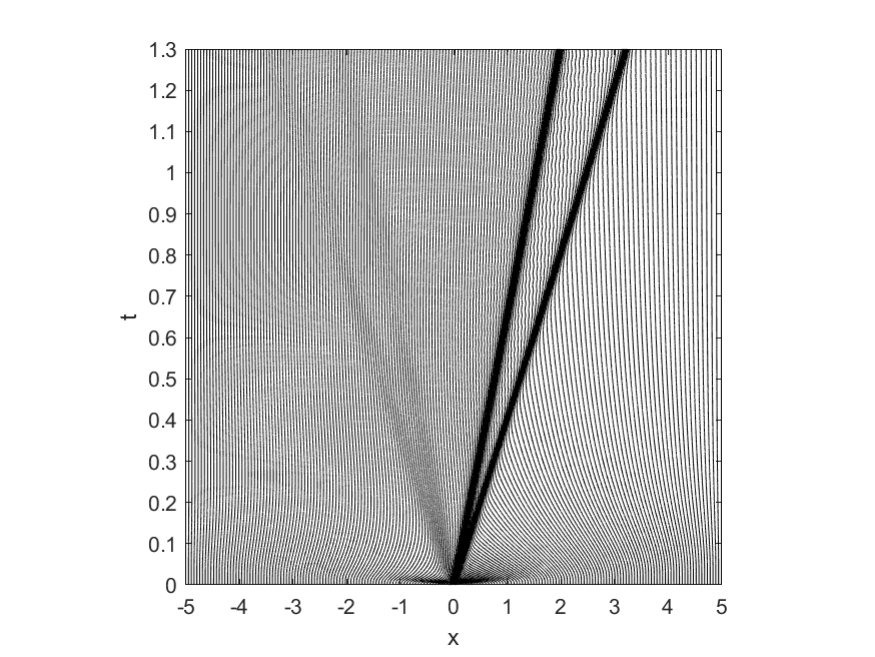}}
\subfigure[$P^2$-DG: mesh trajectories]{
\includegraphics[width=0.31\textwidth,trim=25 5 40 18,clip]{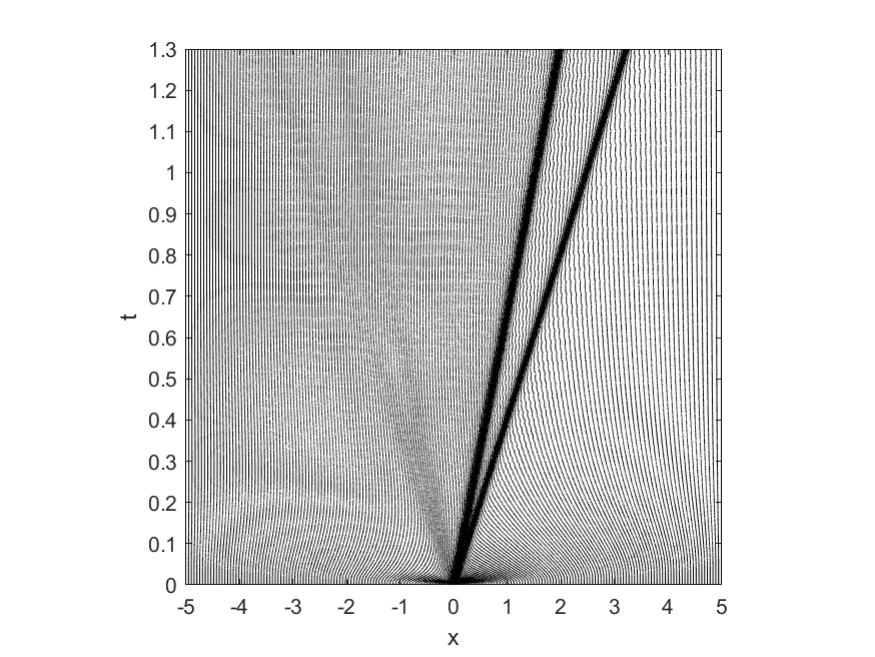}}
\subfigure[$P^3$-DG: mesh trajectories]{
\includegraphics[width=0.31\textwidth,trim=25 5 40 18,clip]{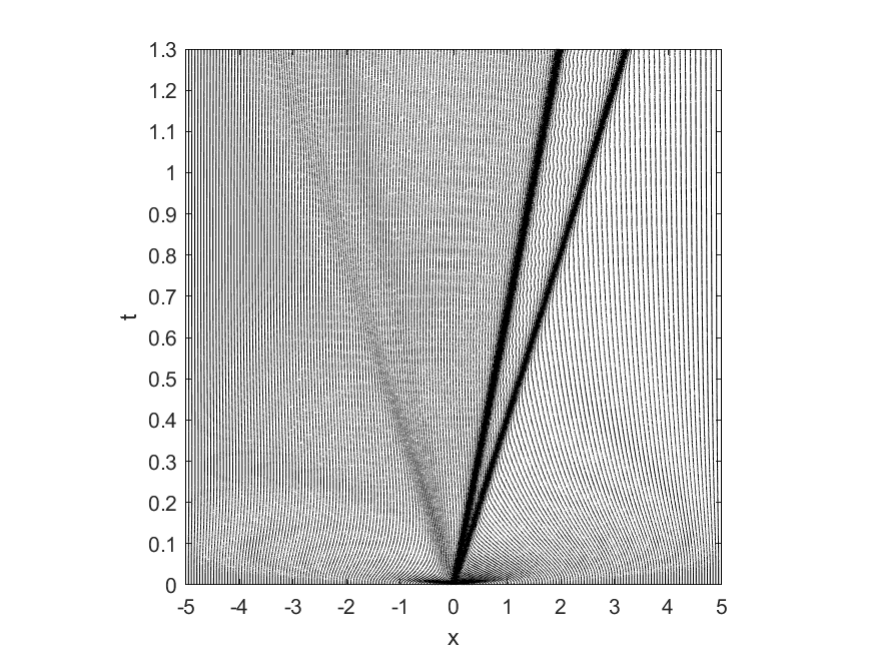}}
\subfigure[$P^1$-DG: density $\rho$]{
\includegraphics[width=0.31\textwidth,trim=25 5 40 18,clip]{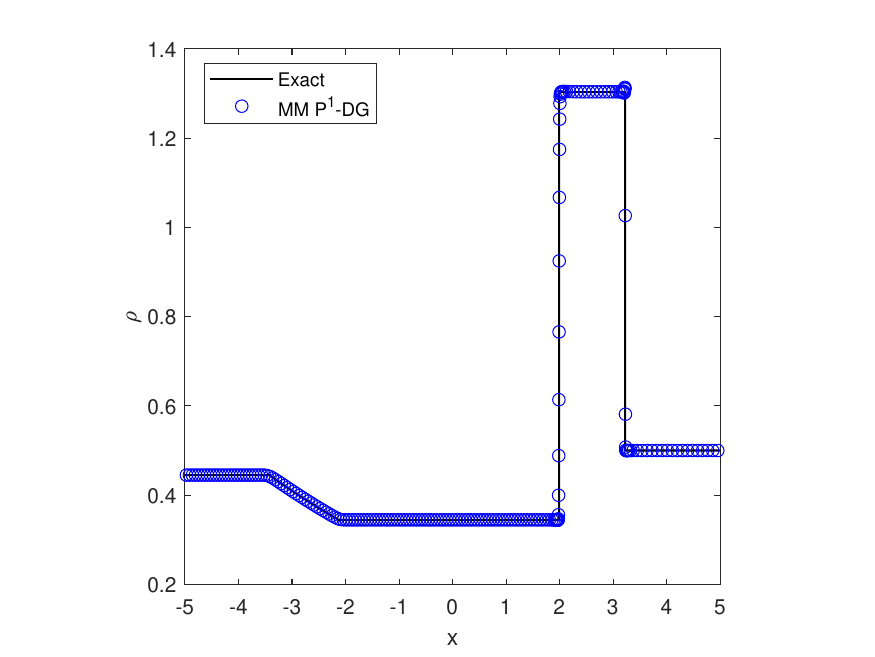}}
\subfigure[$P^2$-DG: density $\rho$]{
\includegraphics[width=0.31\textwidth,trim=25 5 40 18,clip]{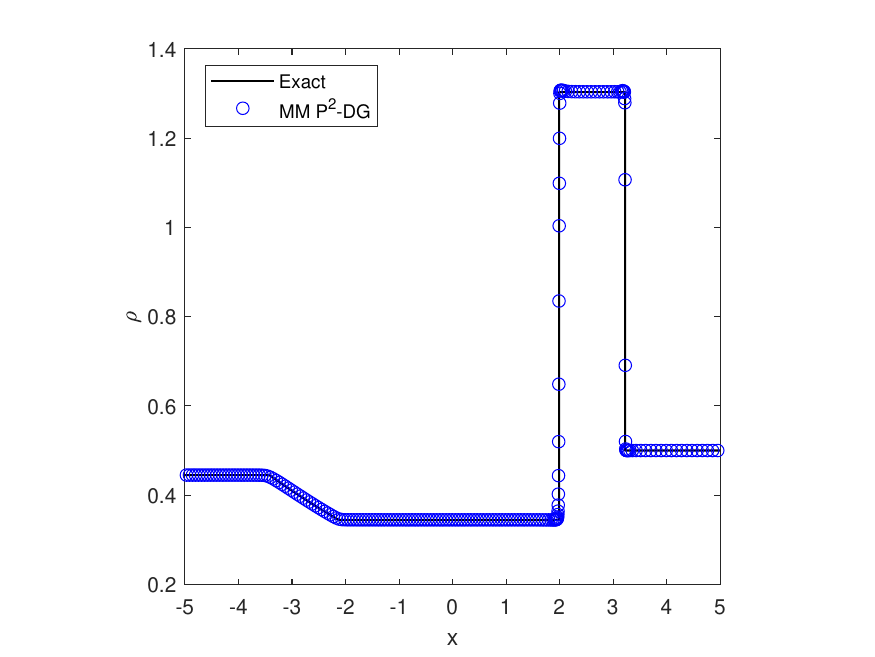}}
\subfigure[$P^3$-DG: density $\rho$]{
\includegraphics[width=0.31\textwidth,trim=25 5 40 18,clip]{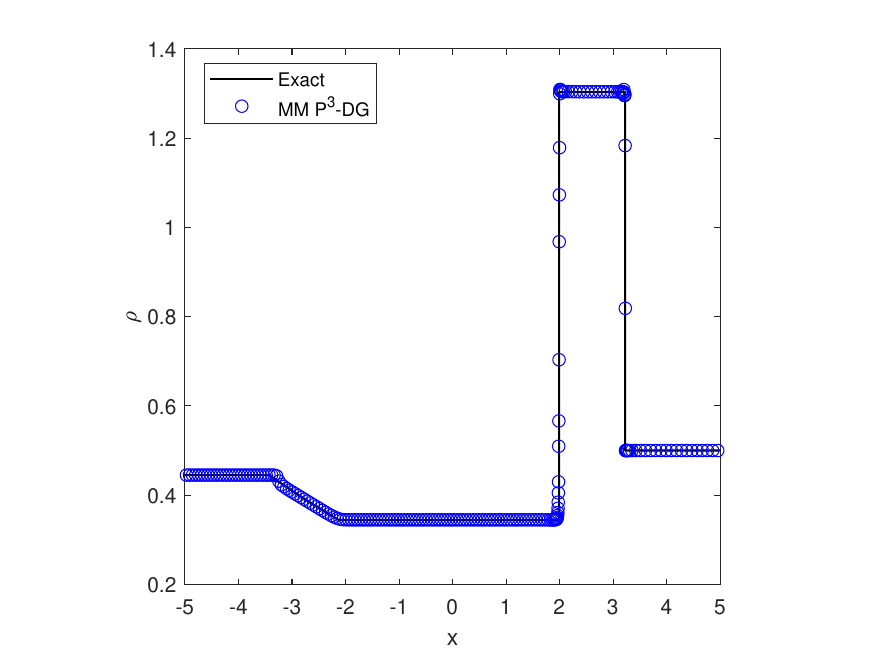}}
\subfigure[$P^1$-DG: $\Delta t$]{
\includegraphics[width=0.31\textwidth,trim=25 5 40 18,clip]{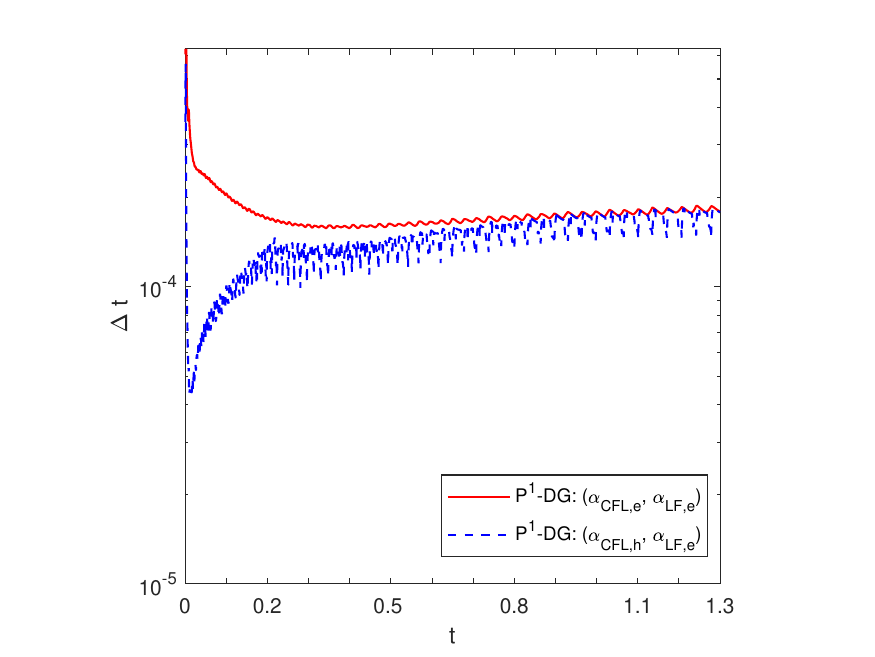}}
\subfigure[$P^2$-DG: $\Delta t$]{
\includegraphics[width=0.31\textwidth,trim=25 5 40 18,clip]{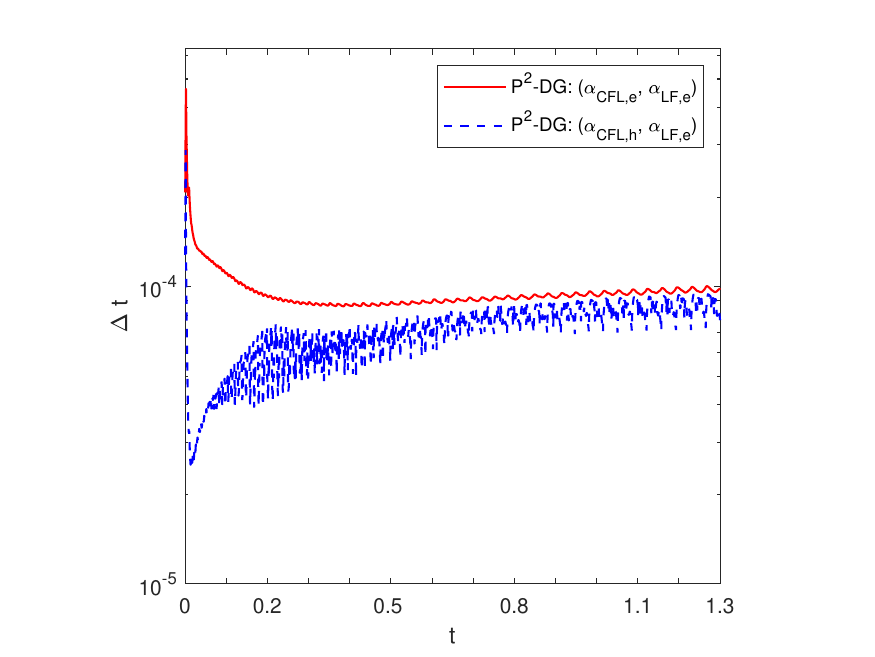}}
\subfigure[$P^3$-DG: $\Delta t$]{
\includegraphics[width=0.31\textwidth,trim=25 5 40 18,clip]{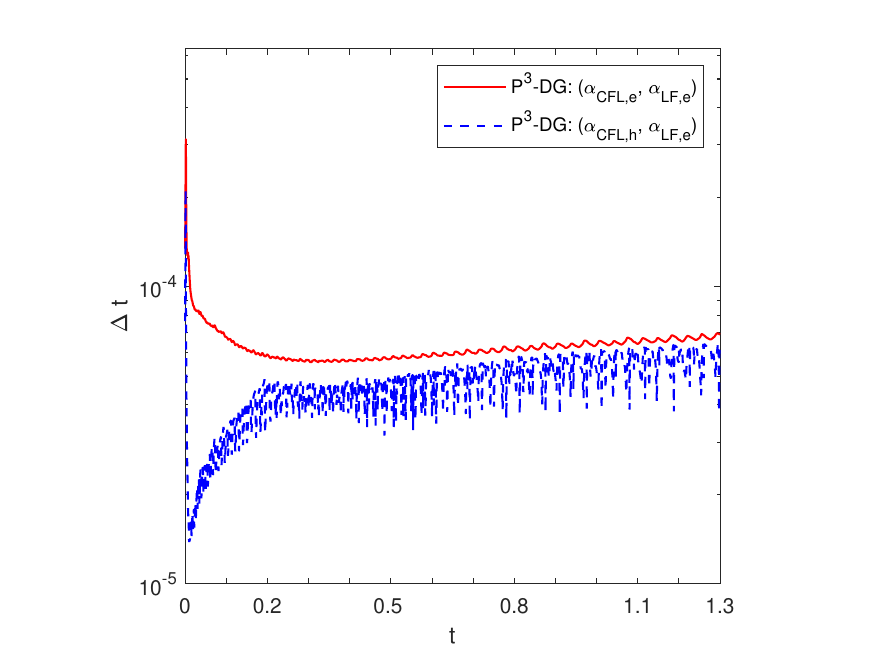}}
\caption{Example \ref{Lax}. The mesh trajectories, solution, and time step-size are obtained with the $P^k$-DG method ($k=1,2,3$) and a moving mesh of $N=200$.}
\label{Fig:Lax-Pk}
\end{figure}

\begin{example}\label{Burgers-2d}
(2D Burgers' equation)
\end{example}
We now consider Burgers' equation in two dimensions,
\begin{equation}
\label{burgers-2d}
u_t +  \Big(\frac{u^2}{2}\Big)_x +\Big(\frac{u^2}{2}\Big)_y  = 0,\quad (x,y)\in(0,2)\times(0,2)
\end{equation}
subject to the initial condition $u(x,y,0) = e^{(-c(x^2+y^2))}$, $c=- \ln(10^{-16})$
and periodic boundary conditions. The final time is $T = 2$.

The mesh, solution, and time step-size (associated with three selection strategies, ERK PI,
CFL ($\alpha_{CFL,e},\alpha_{LF,e}$), and CFL ($\alpha_{CFL,h}$, $\alpha_{LF,e}$)) obtained
with the moving mesh $P^k$-DG method ($k=1,2,3$)
and $N=30\times30\times4$ are plotted in Fig.~\ref{Fig:Burgers-2d-Pk}.
The results show that the computation is stable
and the mesh points are concentrated in regions with sharp jumps in the solution. Moreover, $\Delta t$
associated with ($\alpha_{CFL,e}$, $\alpha_{LF,e}$) is larger and has smaller oscillations than that
associated with ($\alpha_{CFL,h}$, $\alpha_{LF,e}$) while $\Delta t$
associated with ERK PI is significantly larger than that associated with ($\alpha_{CFL,e}$, $\alpha_{LF,e}$)
(and this is especially true for higher-order DG).
 The time step-size for the meshes with $N=10\times10\times4,~20\times20\times4$, and $30\times30\times4$,
is shown in Fig.~\ref{Fig:Burgers-2d-Pk-dtN}.
The number of time steps obtained with $N=10\times10\times4,~20\times20\times4$, and $30\times30\times4$, is shown in Fig~\ref{Fig:Burgers-2d-Pk-Nsteps}.
The number of time steps for CFL-based time-size selection is significantly larger than that associated with ERK PI. Moreover, it increases more significantly as $N$ increases. On the other hand, the number of time steps associated with ERK PI increases almost linearly and more mildly as $N$ increases and does not have much difference for $k = 1, 2, 3$.

\begin{figure}[H]
\centering
\subfigure[$P^1$-DG: mesh]{
\includegraphics[width=0.31\textwidth,trim=25 0 40 10,clip]{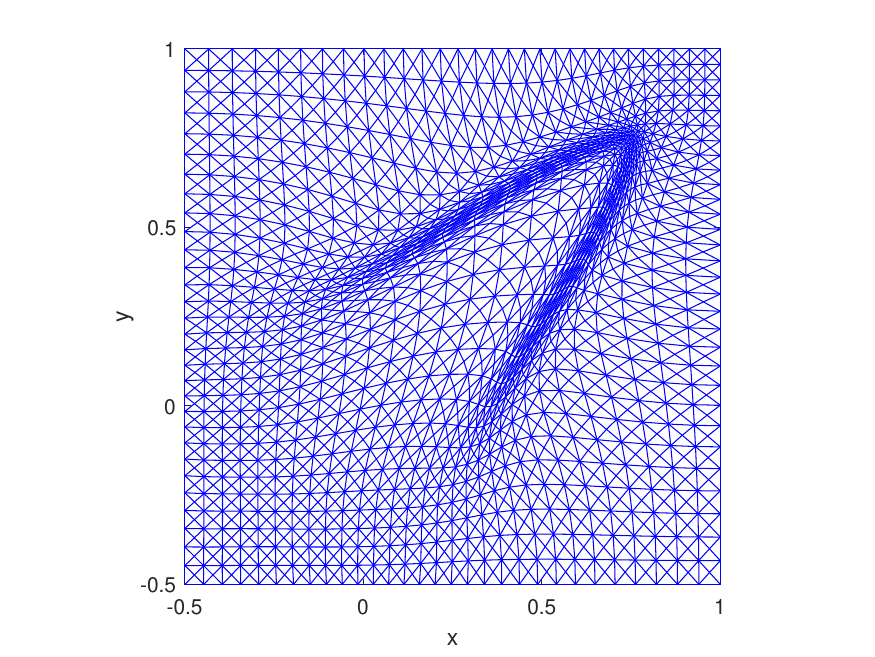}}
\subfigure[$P^2$-DG: mesh]{
\includegraphics[width=0.31\textwidth,trim=25 0 40 10,clip]{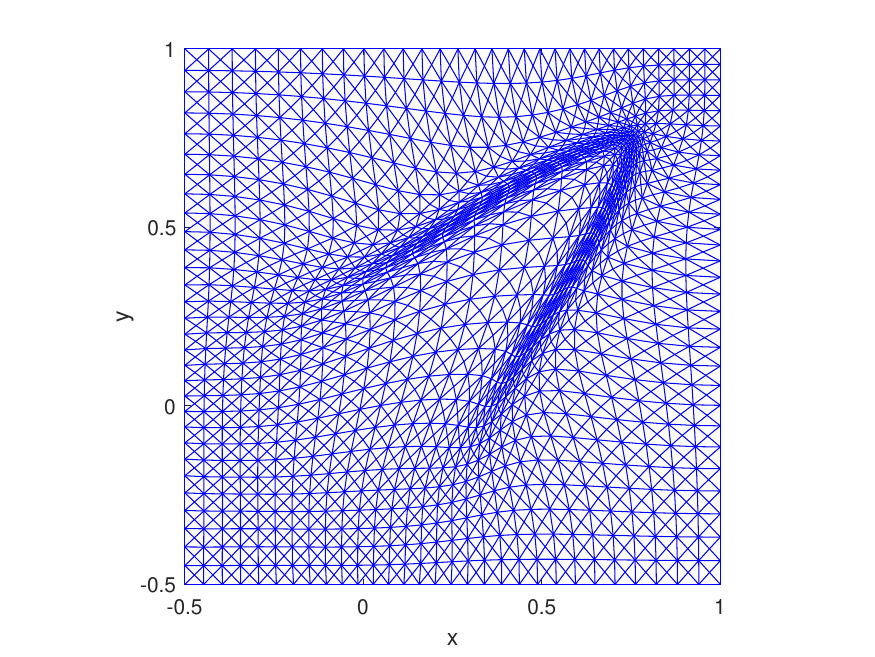}}
\subfigure[$P^3$-DG: mesh]{
\includegraphics[width=0.31\textwidth,trim=25 0 40 10,clip]{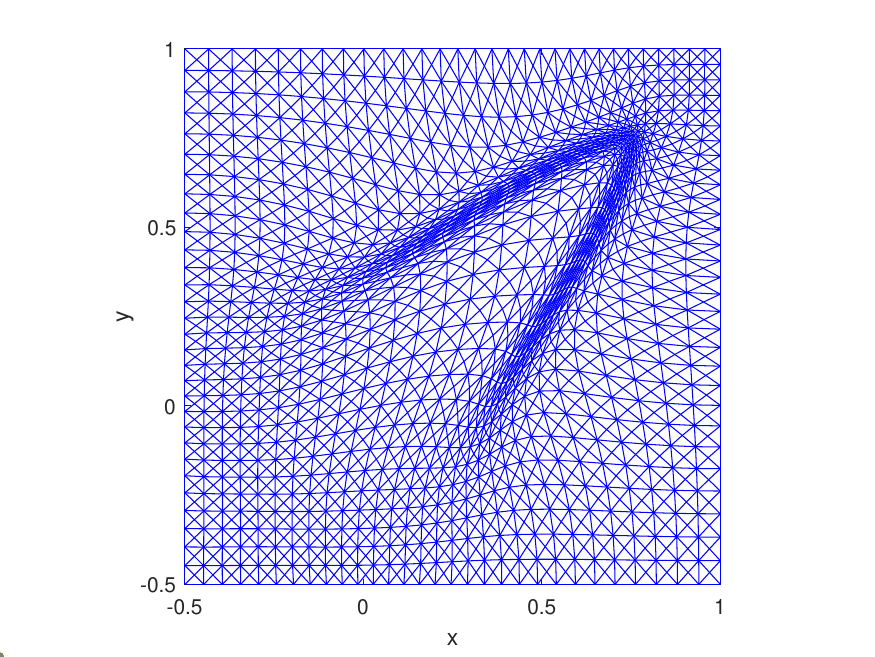}}
\subfigure[$P^1$-DG: solution $u$]{
\includegraphics[width=0.31\textwidth,trim=25 0 40 10,clip]{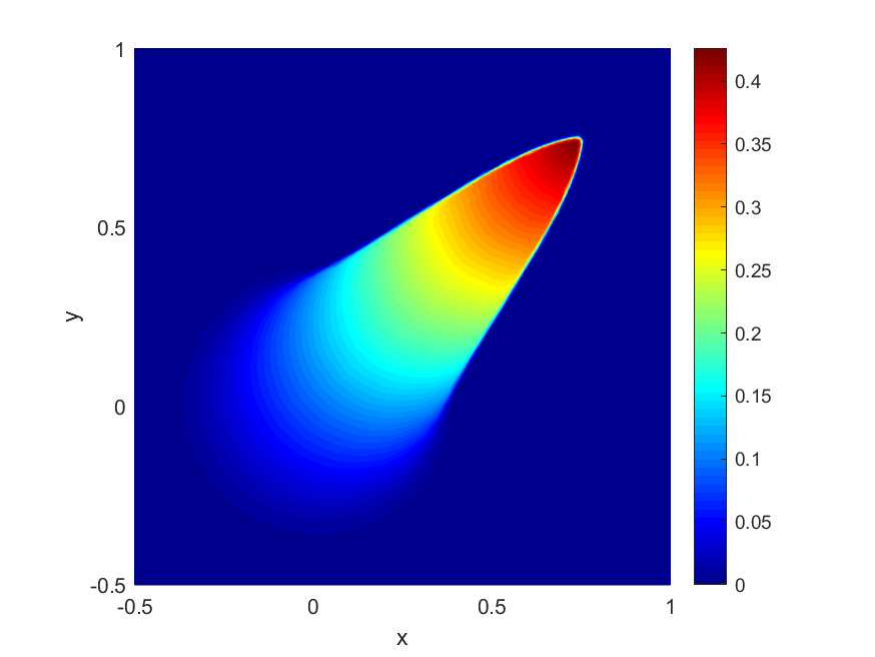}}
\subfigure[$P^2$-DG: solution $u$]{
\includegraphics[width=0.31\textwidth,trim=25 0 40 10,clip]{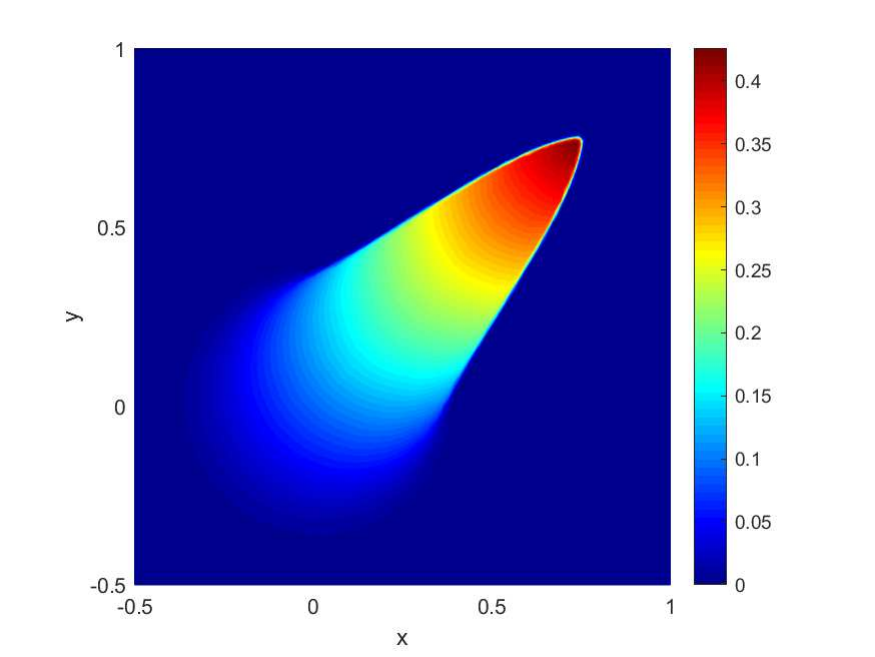}}
\subfigure[$P^3$-DG: solution $u$]{
\includegraphics[width=0.31\textwidth,trim=25 0 40 10,clip]{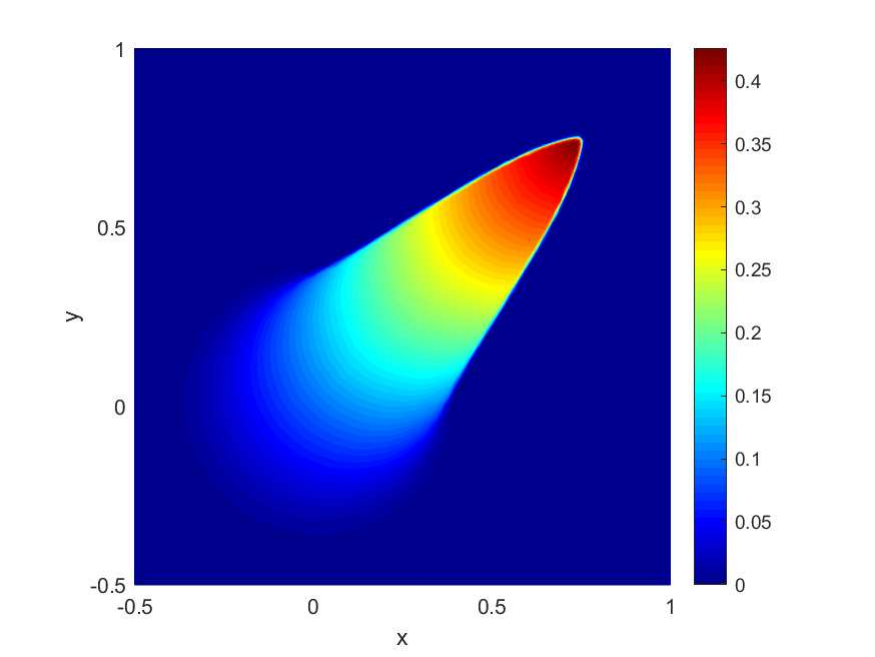}}
\subfigure[$P^1$-DG: $\Delta t$]{
\includegraphics[width=0.31\textwidth,trim=25 0 40 10,clip]{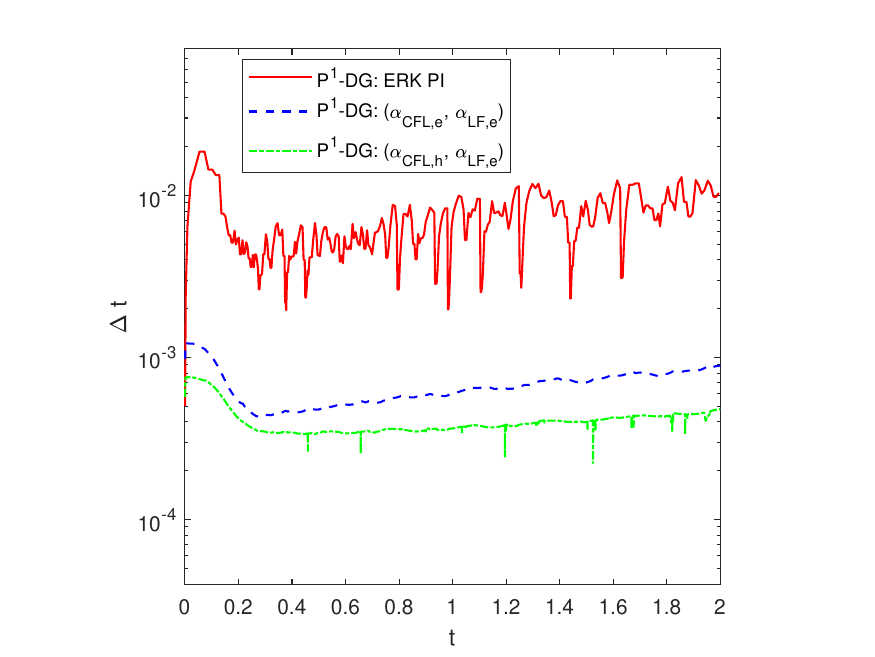}}
\subfigure[$P^2$-DG: $\Delta t$]{
\includegraphics[width=0.31\textwidth,trim=25 0 40 10,clip]{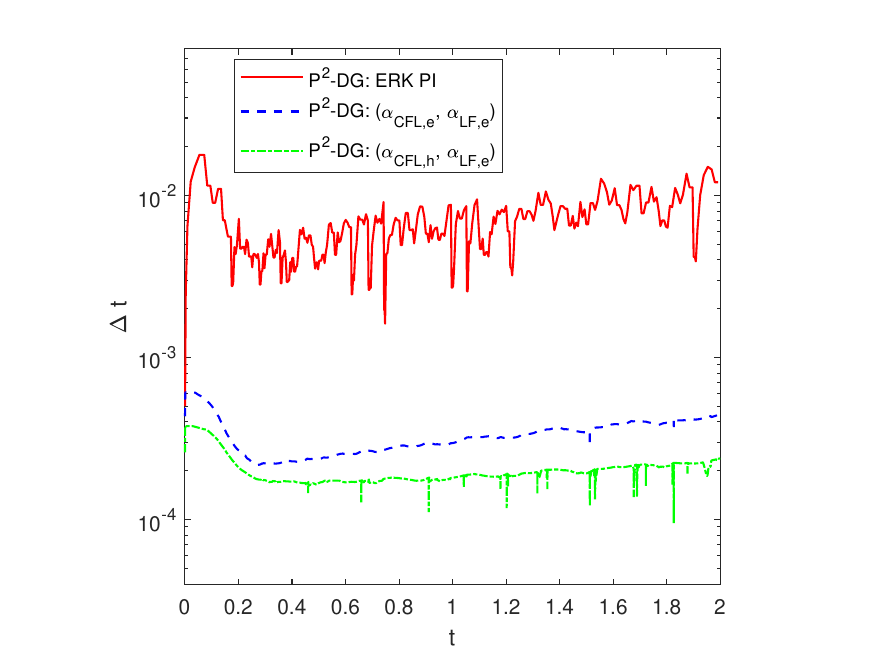}}
\subfigure[$P^3$-DG: $\Delta t$]{
\includegraphics[width=0.31\textwidth,trim=25 0 40 10,clip]{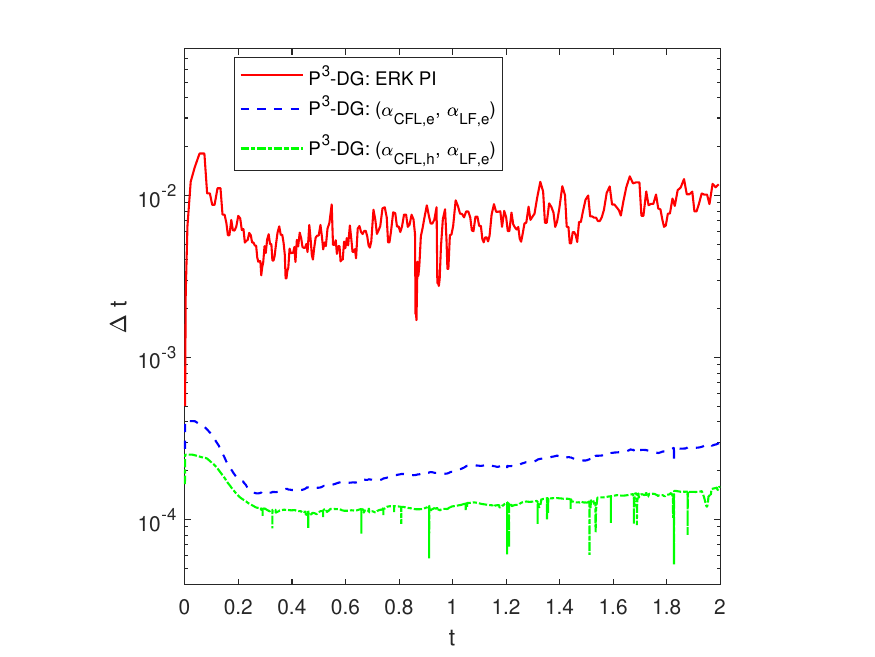}}
\caption{Example \ref{Burgers-2d}. The time step-size $\Delta t$ results from three selection strategies, ERK PI,
CFL ($\alpha_{CFL,h}$,$\alpha_{LF,e}$), and CFL ($\alpha_{CFL,e}$,$\alpha_{LF,e}$).
The $P^1$, $P^2$, and $P^3$-DG method with a moving mesh of $N=30\times30\times4$ is used.}
\label{Fig:Burgers-2d-Pk}
\end{figure}

\begin{figure}[H]
\centering
\subfigure[$P^1$-DG: $(\alpha_{CFL,e},\alpha_{LF,e})$]{
\includegraphics[width=0.31\textwidth,trim=25 0 40 10,clip]{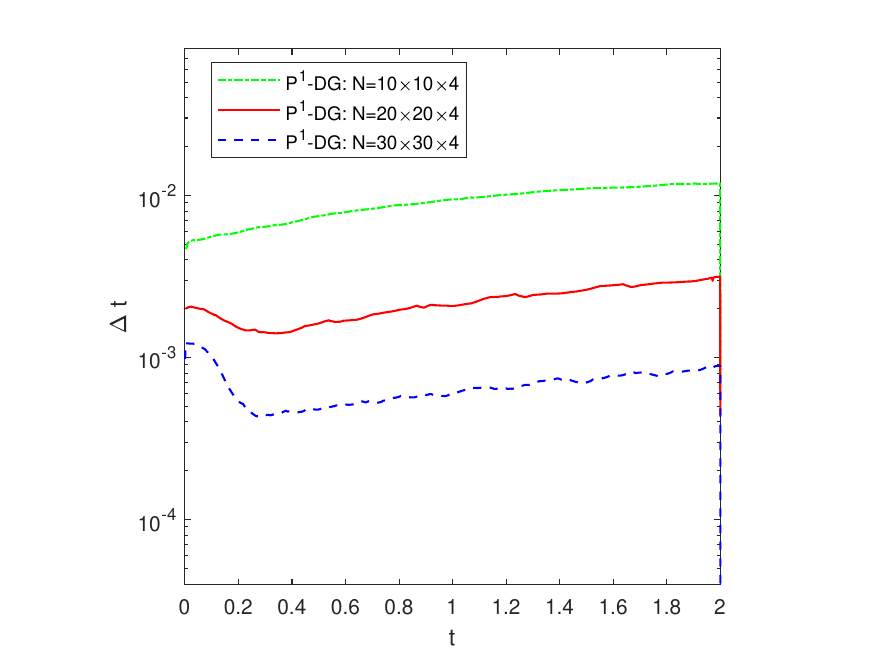}}
\subfigure[$P^2$-DG: $(\alpha_{CFL,e},\alpha_{LF,e})$]{
\includegraphics[width=0.31\textwidth,trim=25 0 40 10,clip]{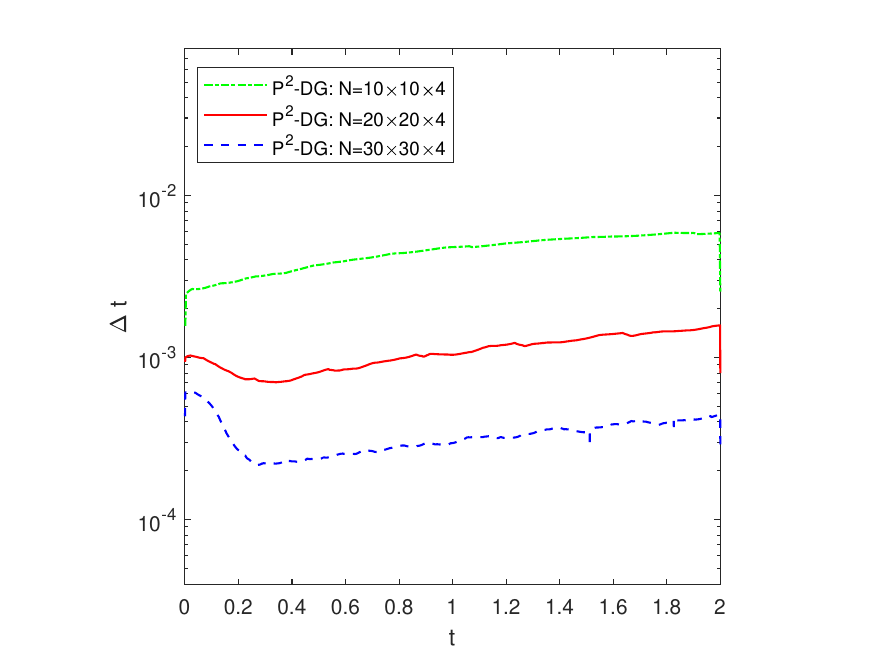}}
\subfigure[$P^3$-DG: $(\alpha_{CFL,e},\alpha_{LF,e})$]{
\includegraphics[width=0.31\textwidth,trim=25 0 40 10,clip]{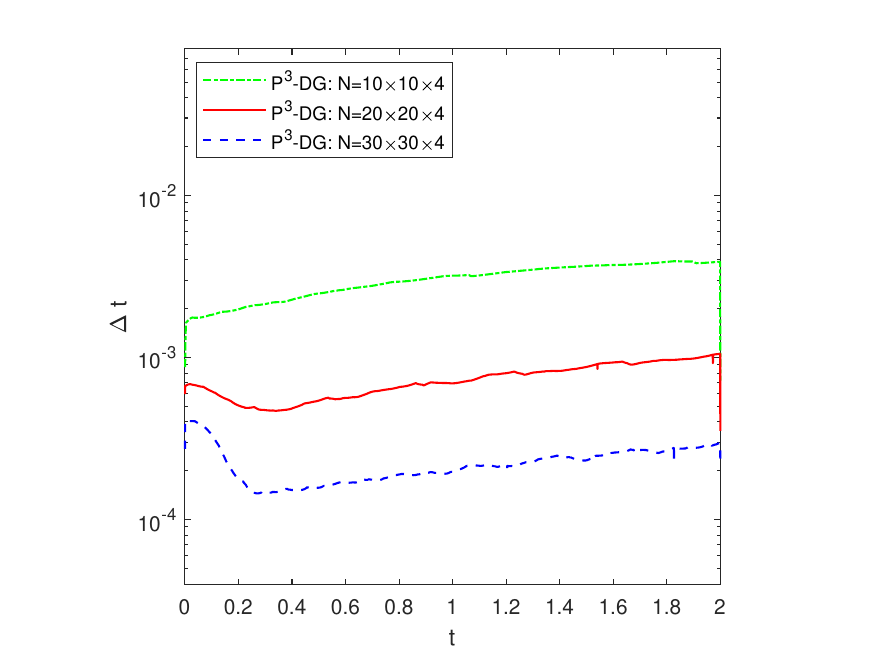}}
\subfigure[$P^1$-DG: ERK PI]{
\includegraphics[width=0.31\textwidth,trim=25 0 40 10,clip]{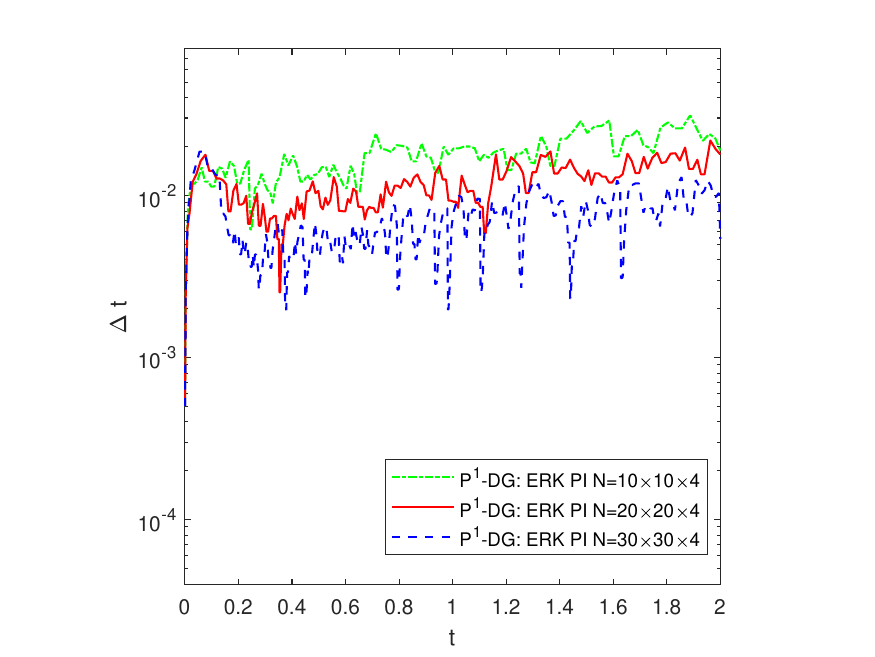}}
\subfigure[$P^2$-DG: ERK PI]{
\includegraphics[width=0.31\textwidth,trim=25 0 40 10,clip]{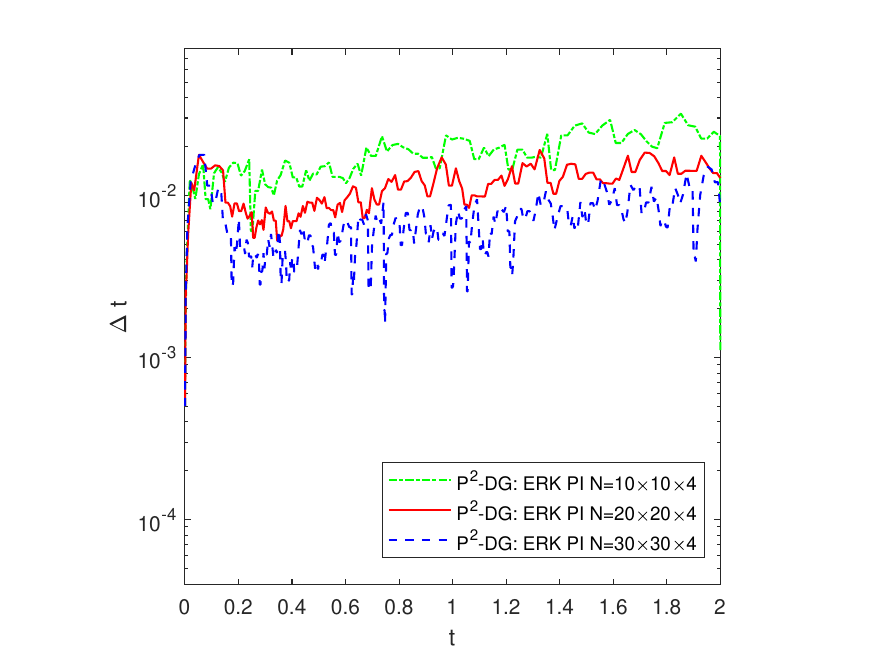}}
\subfigure[$P^3$-DG: ERK PI]{
\includegraphics[width=0.31\textwidth,trim=25 0 40 10,clip]{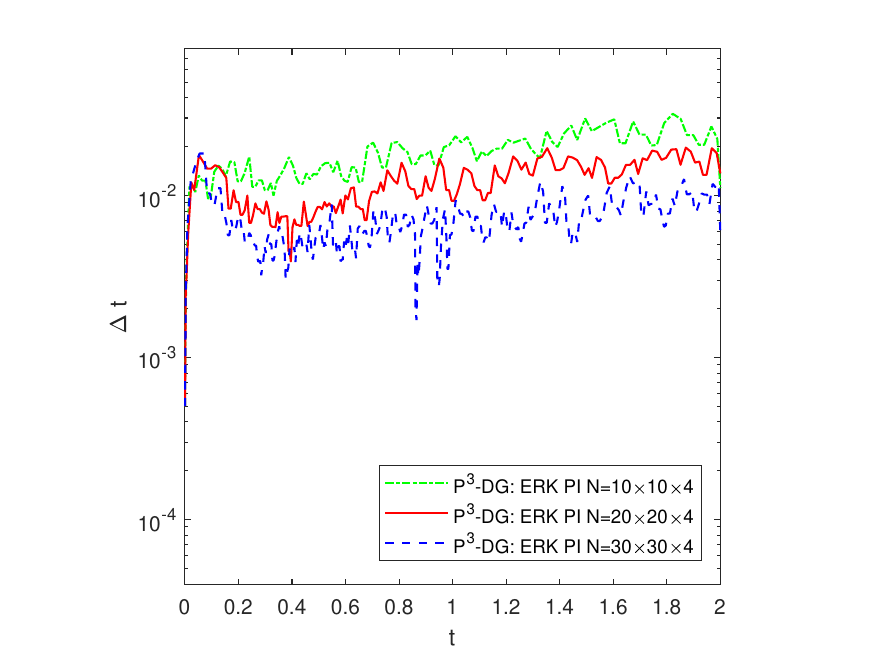}}
\caption{Example \ref{Burgers-2d}.
The time step-size results from ERK PI and CFL ($\alpha_{CFL,e}$, $\alpha_{LF,e}$) strategies for
the $P^k$-DG method ($k=1,2,3$) and the moving meshes of
$N=10\times10\times4,~20\times20\times4$, and $30\times30\times4$.}
\label{Fig:Burgers-2d-Pk-dtN}
\end{figure}

\begin{figure}[H]
\centering
\includegraphics[width=0.4\textwidth,trim=25 0 40 0,clip]{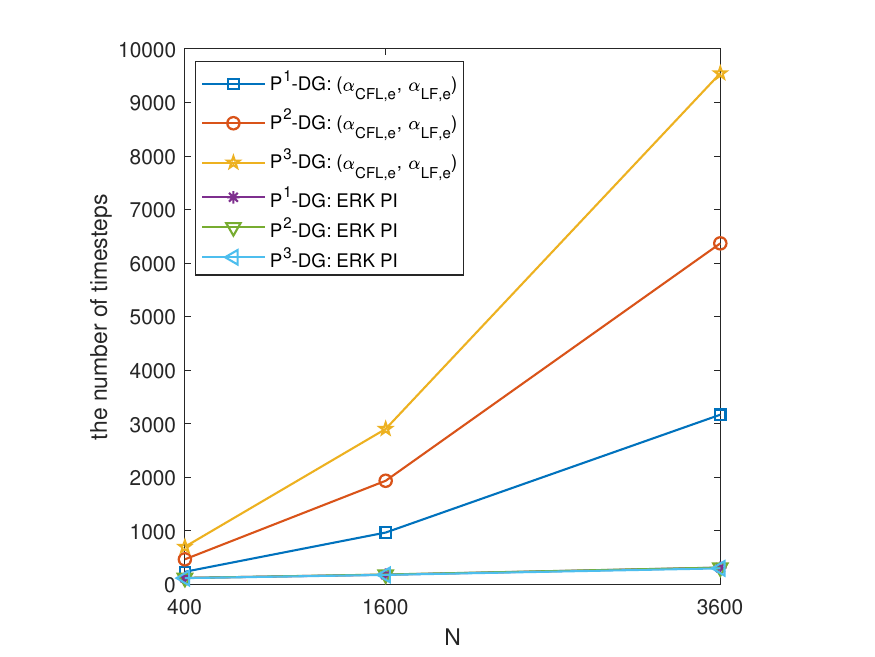}
\caption{Example \ref{Burgers-2d}.
The number of time step-sizes results from ERK PI and CFL ($\alpha_{CFL,e}$, $\alpha_{LF,e}$) strategies for
the $P^k$-DG method ($k=1,2,3$) and the moving meshes of
$N=10\times10\times4,~20\times20\times4$, and $30\times30\times4$.}
\label{Fig:Burgers-2d-Pk-Nsteps}
\end{figure}

\begin{example}\label{RP-2d}
(Riemann problem for the 2D Euler equations)
\end{example}
We consider a two-dimensional Riemann problem of the Euler equations
\begin{equation}\label{Euler-Eq-2d}
\frac{\partial}{\partial t} \begin{pmatrix} \rho \\ \rho u \\ \rho v \\ E \end{pmatrix}
+ \frac{\partial}{\partial x} \begin{pmatrix} \rho u \\ \rho u^2 + P \\ \rho u v \\ u (E + P) \end{pmatrix}
 + \frac{\partial}{\partial y} \begin{pmatrix} \rho v \\ \rho u v \\ \rho v^2 + P  \\ u (E + P) \end{pmatrix} = 0.
\end{equation}
The computational domain is taken as $[0,1]\times[0,1]$, and the initial conditions are
\begin{align}\label{RP1-initial}
(\rho,u,v,P)(x,y,0) =
\begin{cases}
(1.1,~0,~0,~1.1), \quad & x\geq0.5,~y\geq0.5\\
(0.5065,~0.8939,~0,~0.35), \quad & x<0.5,~y\geq0.5\\
(1.1,~0.8939,~0.8939,~1.1), \quad & x<0.5,~y<0.5\\
(0.5065,~0,~0.8939,~0.35), \quad & x\geq0.5,~y<0.5.\\
\end{cases}
\end{align}
The energy density $E$ and the pressure $P$ are related by the equation of the state
$E = P/(\gamma -1 ) + \rho (u^2 + v^2) /2$ with $\gamma = 1.4$.
The problem contains complicated interactions between four
initial shocks. This problem has been widely used as a benchmark test for
shock capturing methods due to the challenge in resolving
the complicated flow features that emerge from those interactions.
The final time for the computation is taken as $T = 0.25$.

The mesh and density at the final time, and time step-size obtained with the moving mesh $P^k$-DG method ($k=1$, 2, 3) and $N=50\times50\times4$ are shown in Fig.~\ref{Fig:RP1-2d-Pk}.
Like the previous examples, the computation is stable and the mesh points are concentrated correctly
around the shocks. Moreover, $\Delta t$ associated with ($\alpha_{CFL,e}$, $\alpha_{LF,e}$) is
slightly larger than that associated with ($\alpha_{CFL,h}$, $\alpha_{LF,e}$). However,
unlike the previous examples, $\Delta t$ has large oscillations before $t < 0.1$ for both cases.
\begin{figure}[H]
\centering
\subfigure[$P^1$-DG: mesh]{
\includegraphics[width=0.31\textwidth,trim=25 0 40 10,clip]
{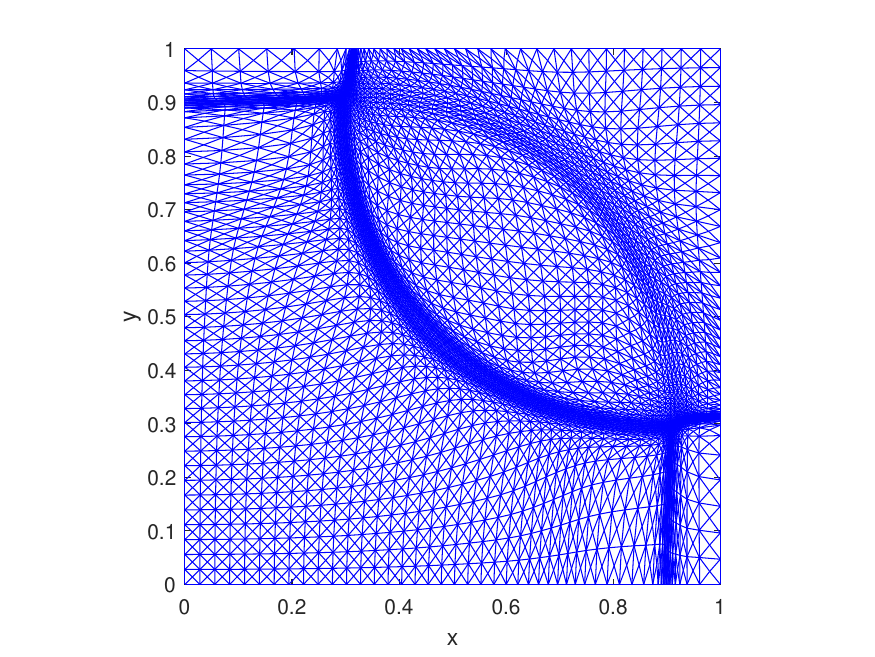}}
\subfigure[$P^2$-DG: mesh]{
\includegraphics[width=0.31\textwidth,trim=25 0 40 10,clip]
{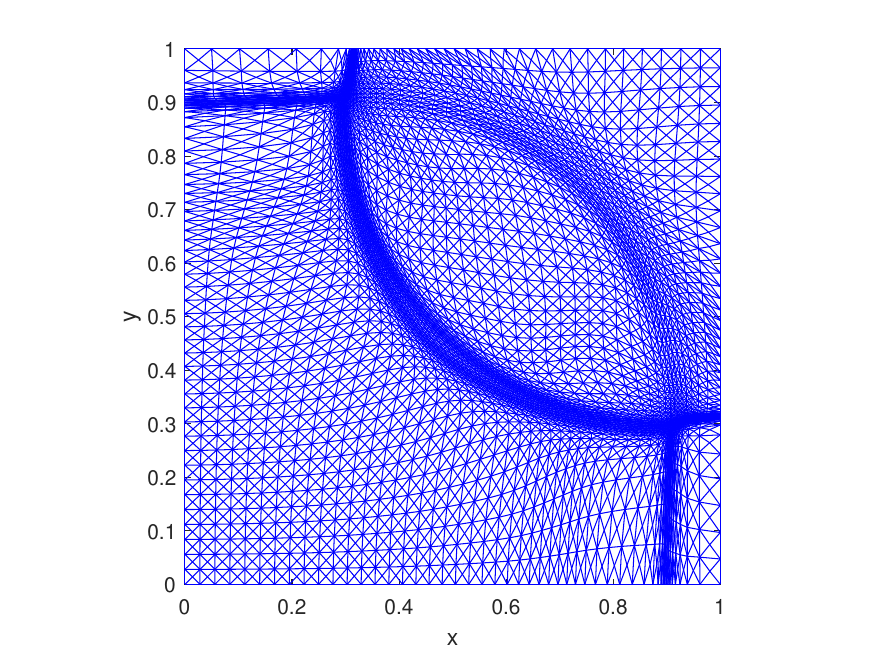}}
\subfigure[$P^3$-DG: mesh]{
\includegraphics[width=0.31\textwidth,trim=25 0 40 10,clip]
{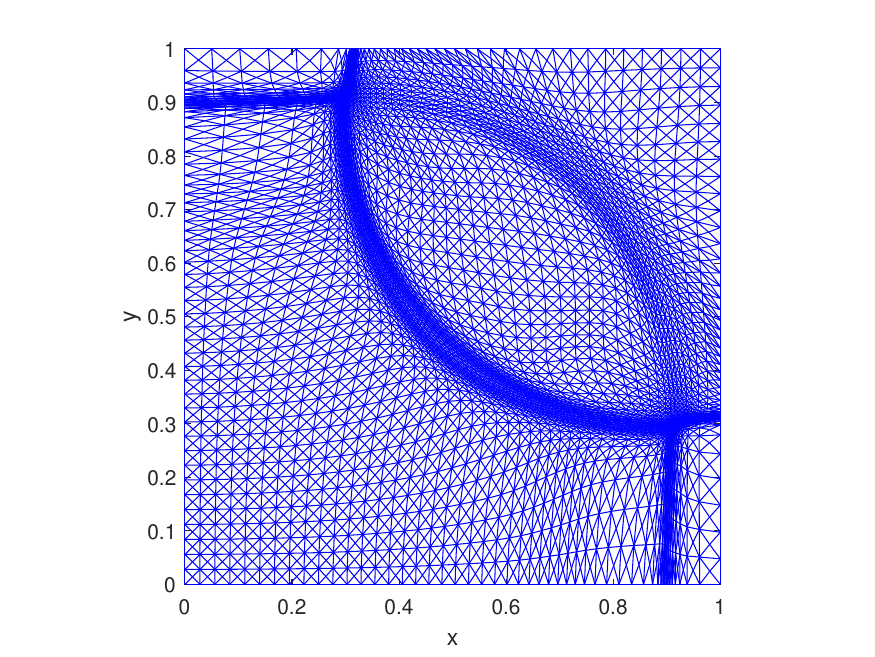}}
\subfigure[$P^1$-DG: density $\rho$]{
\includegraphics[width=0.31\textwidth,trim=25 0 40 10,clip]
{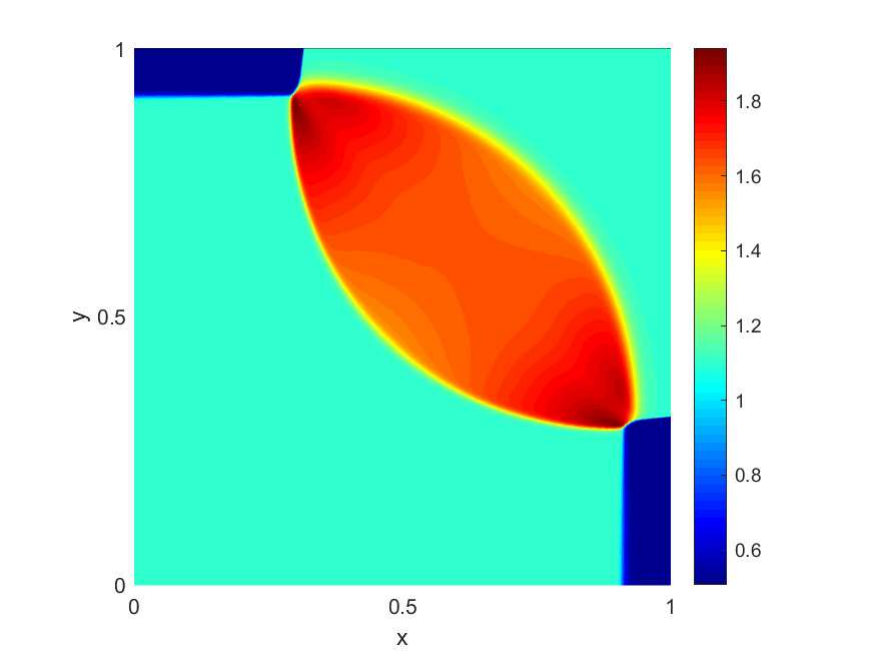}}
\subfigure[$P^2$-DG: density $\rho$]{
\includegraphics[width=0.31\textwidth,trim=25 0 40 10,clip]
{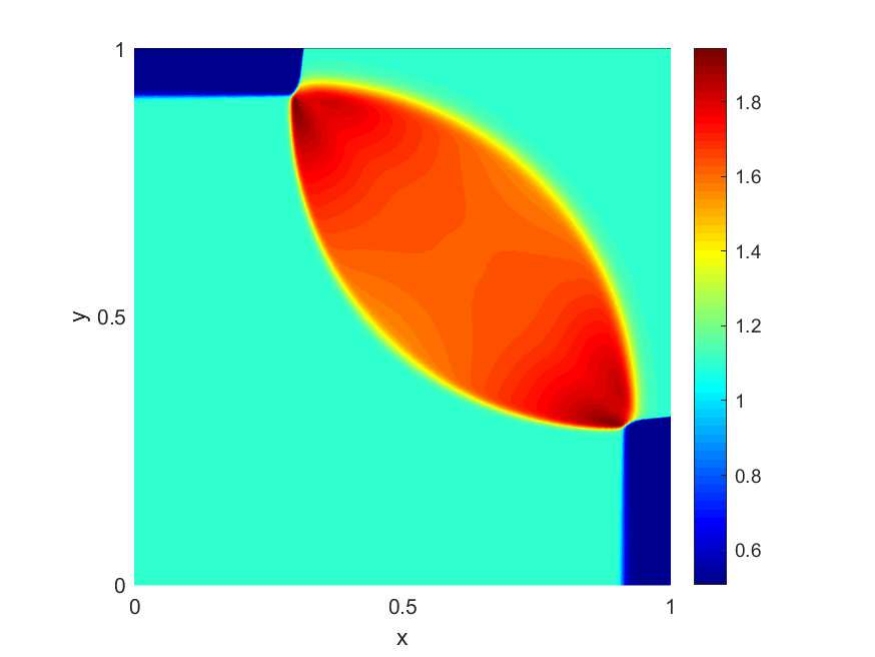}}
\subfigure[$P^3$-DG: density $\rho$]{
\includegraphics[width=0.31\textwidth,trim=25 0 40 10,clip]
{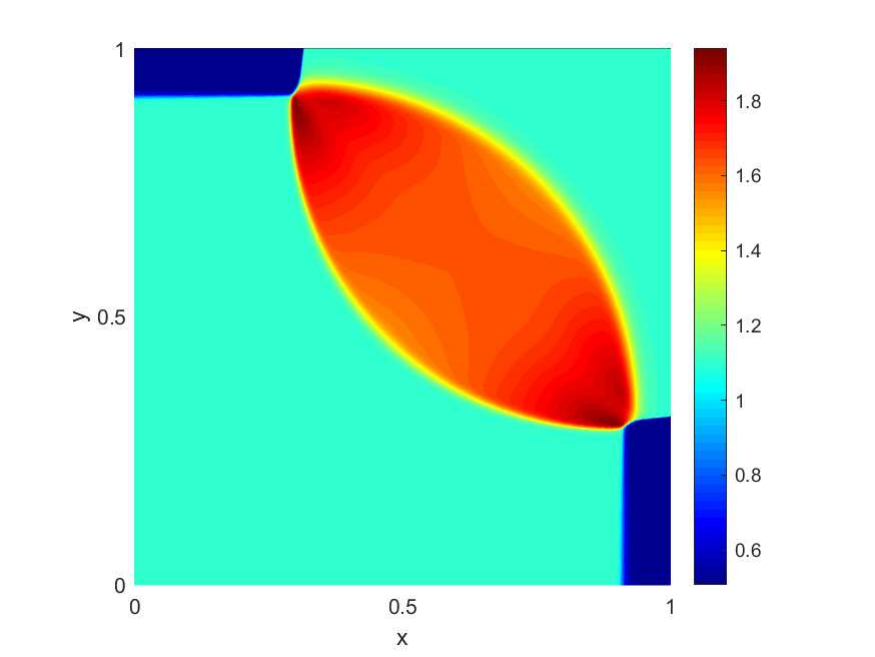}}
\subfigure[$P^1$-DG: $\Delta t$]{
\includegraphics[width=0.31\textwidth,trim=25 0 40 10,clip]
{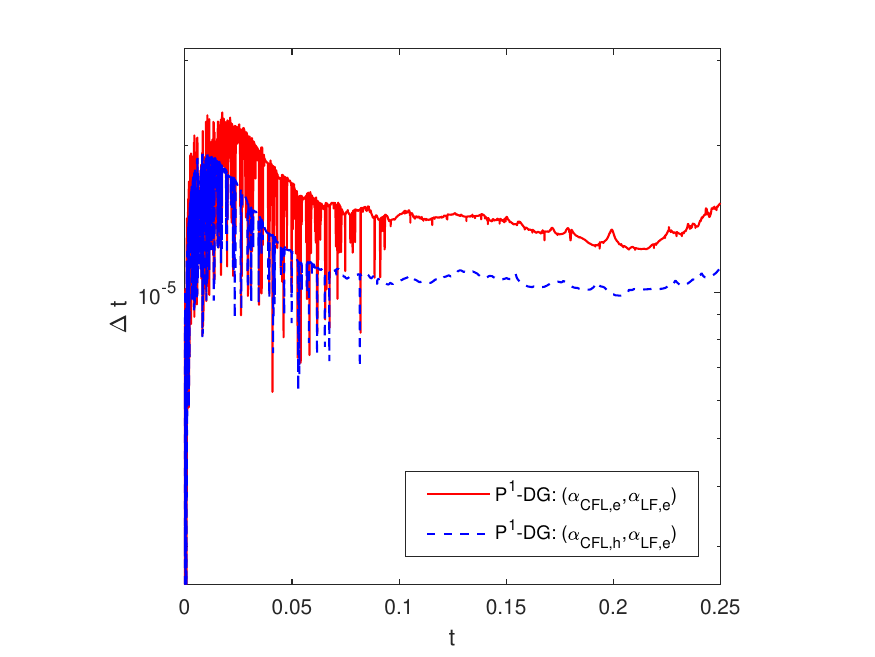}}
\subfigure[$P^2$-DG: $\Delta t$]{
\includegraphics[width=0.31\textwidth,trim=25 0 40 10,clip]
{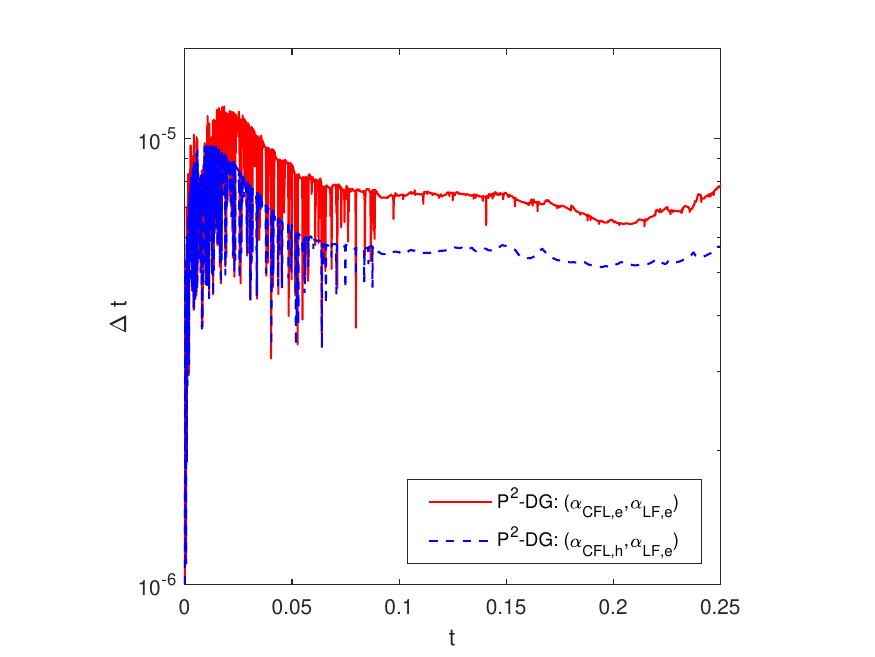}}
\subfigure[$P^3$-DG: $\Delta t$]{
\includegraphics[width=0.31\textwidth,trim=25 0 40 10,clip]
{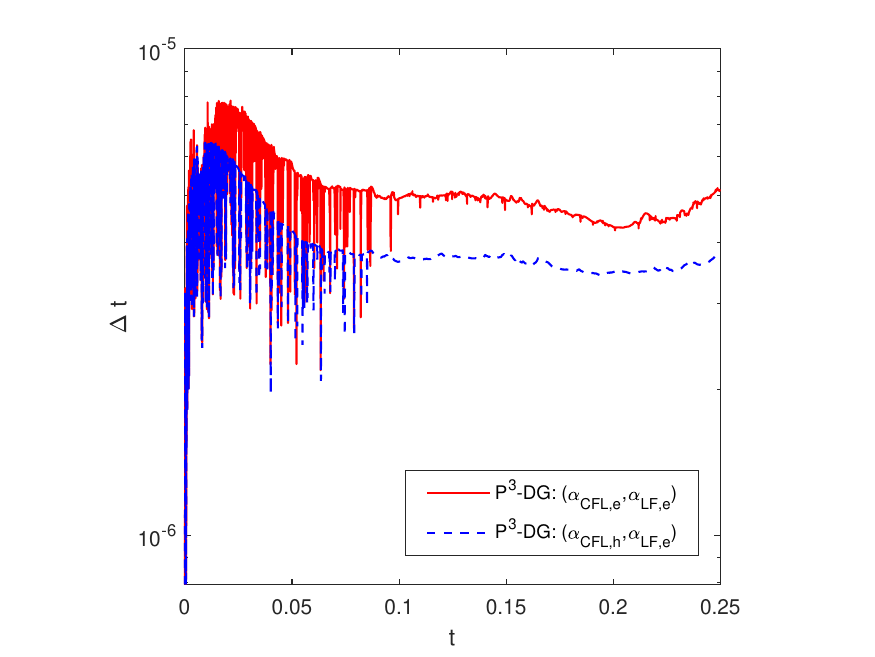}}
\caption{Example \ref{RP-2d}. The mesh, solution, and time step-size are obtained with the $P^k$-DG method ($k=1,2,3$) and a moving mesh of $N=50\times50\times4$.}
\label{Fig:RP1-2d-Pk}
\end{figure}

\begin{example}\label{Shu-2d}
(Isentropic vortex convection problem for the 2D Euler equations)
\end{example}
We consider the isentropic vortex problem of Shu \cite{Shu-1998} for the two-dimensional compressible Euler equations \eqref{Euler-Eq-2d}. The mean flow is $\rho = P = u=v=1$. We add to this mean flow an isentropic vortex perturbations
centered at $(x_0,y_0)$ in $(u,v)$ and the temperature ($T = P/\rho$) and no perturbation in the entropy $S = P\rho^{-\gamma}$, i.e.,
\begin{equation}
\begin{split}
(\delta u,\delta v) = \frac{\varepsilon}{2\pi}e^{0.5(1-r^2)}(-\bar{y},\bar{x}),\quad
\delta T = -\frac{(\gamma-1)\varepsilon^2}{8\gamma\pi^2}e^{1-r^2},
\end{split}
\end{equation}
where $(\bar{x},\bar{y})= (x-x_0, y-y_0)$,~$r^2 = x^2 + y^2$, and the vortex strength $\varepsilon =5$.
This means that the initial conditions are
\begin{align*}\label{Shu-initial}
&\rho(x,y,0) = \Big(1-\frac{(\gamma-1)\varepsilon^2}{8\gamma\pi^2}e^{1-r^2}\Big)^{\frac{1}{\gamma-1}},\quad P(x,y,0) = \rho^\gamma(x,y,0)\\
&u(x,y,0) = 1- \frac{\varepsilon}{2\pi}e^{0.5(1-r^2)}\bar{y},\quad v(x,y,0) = 1+\frac{\varepsilon}{2\pi}e^{0.5(1-r^2)}\bar{x}.
\end{align*}
The computational domain is taken as $(0,10)\times (0,10)$ and $(x_0,y_0)=(5,5)$.
The periodic boundary conditions are used for all unknown variables.
The energy density $E$ and the pressure $P$ are related by the equation of the state
$E = P/(\gamma -1 ) + \rho (u^2 + v^2) /2$ with $\gamma = 1.4$.
The final time for the computation is taken as $T = 1$.

We use the density $\rho$ only to compute the metric tensor in the MMPDE method since
entropy $\mathcal{S} = \ln(P\rho^{-\gamma})$ is constant for this example.
The mesh ($N=50\times50\times4$), the contours of density and momenta at the final time
obtained with $P^3$-DG method and CFL ($\alpha_{CFL,h}$, $\alpha_{LF,e}$) step-size selection strategy
are shown in Fig.~\ref{Fig:Shu-2d-P3}.
For comparison purpose, we plot $\Delta t$ for three time step-size strategies, ERK PI, CFL ($\alpha_{CFL,e},\alpha_{LF,e}$),
and CFL ($\alpha_{CFL,h}$, $\alpha_{LF,e}$) in Fig.~\ref{Fig:Shu-2d-dt}.
Like previous examples, the computation is stable. Moreover, $\Delta t$ associated with ($\alpha_{CFL,e}$, $\alpha_{LF,e}$)
is slightly larger than that associated with ($\alpha_{CFL,h}$, $\alpha_{LF,e}$), and $\Delta t$ associated with ERK PI is
significantly larger than that associated with ($\alpha_{CFL,e}$, $\alpha_{LF,e}$).

\begin{figure}[H]
\centering
\subfigure[Mesh]{
\includegraphics[width=0.33\textwidth,trim=25 0 30 20,clip]{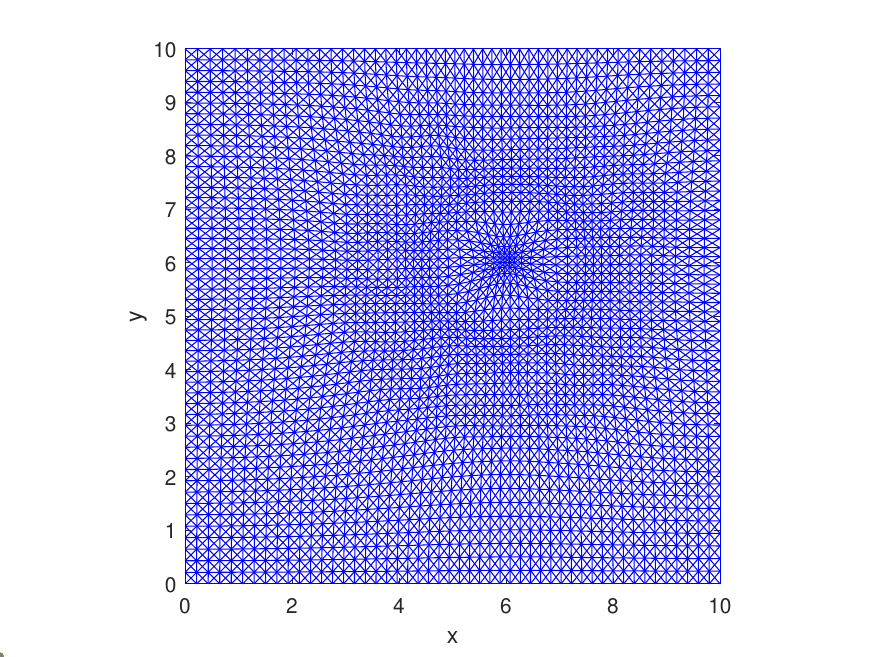}}
\subfigure[$\rho$]{
\includegraphics[width=0.33\textwidth,trim=25 0 30 20,clip]{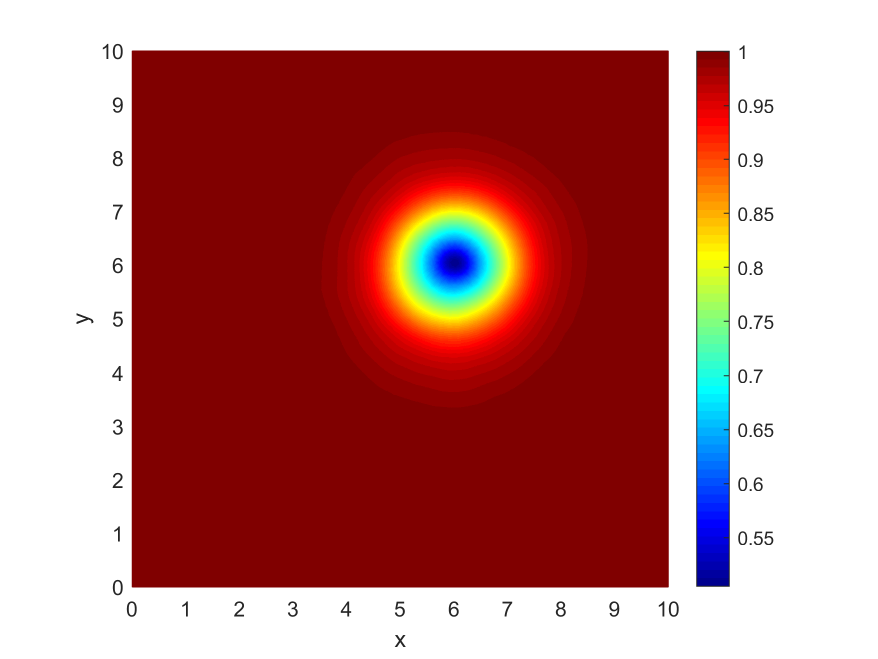}}
\subfigure[$\rho u$]{
\includegraphics[width=0.33\textwidth,trim=25 0 30 20,clip]{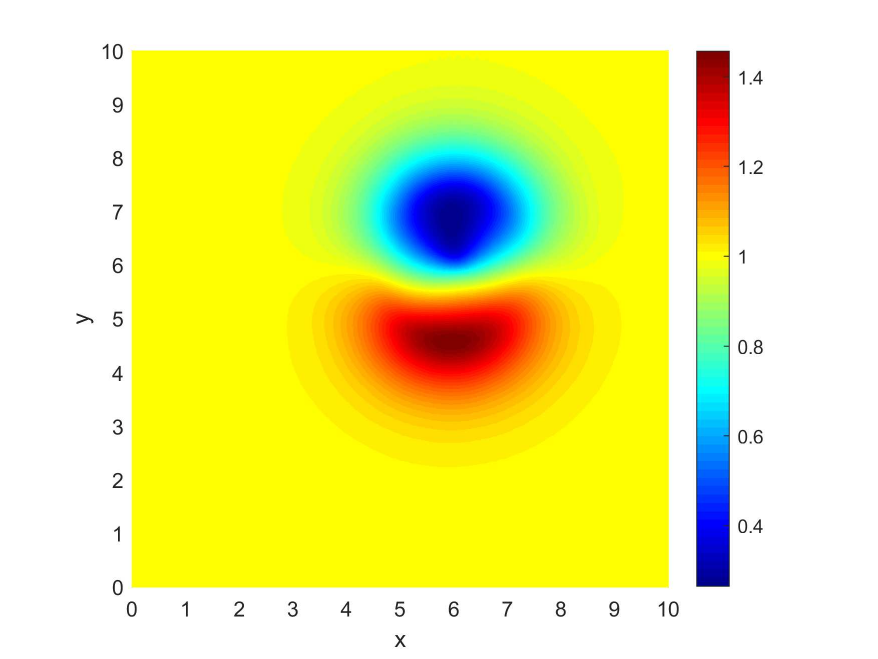}}
\subfigure[$\rho v$]{
\includegraphics[width=0.33\textwidth,trim=25 0 30 20,clip]{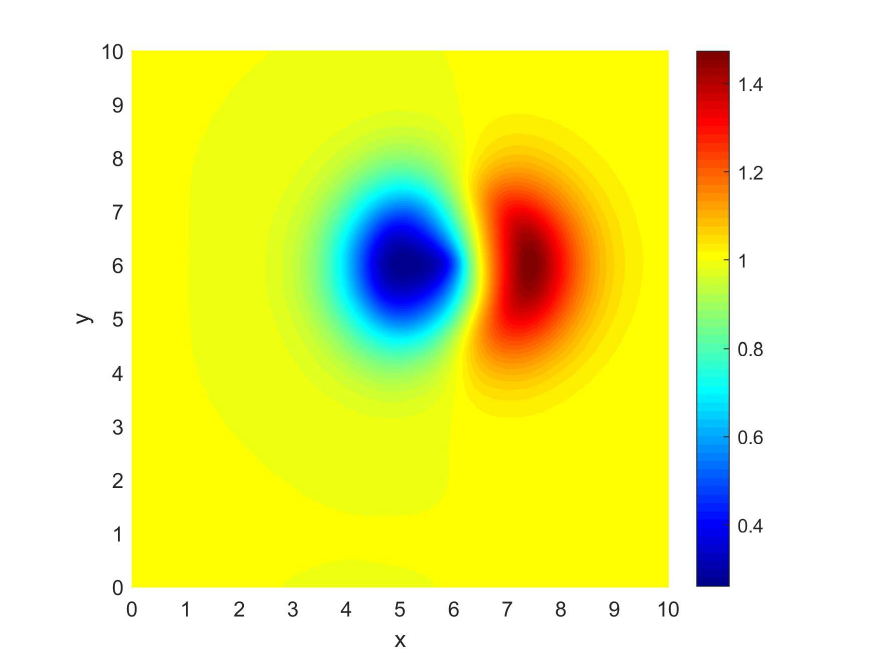}}
\caption{Example \ref{Shu-2d}. The mesh ($N=50\times50\times4$) and contours of the solutions at $t=1$
are obtained with the moving mesh $P^3$-DG method and CFL ($\alpha_{CFL,e}$, $\alpha_{LF,e}$)
time step-size selection strategy.}
\label{Fig:Shu-2d-P3}
\end{figure}

\begin{figure}[H]
\centering
\subfigure[$P^1$-DG]{
\includegraphics[width=0.31\textwidth,trim=25 0 40 10,clip]{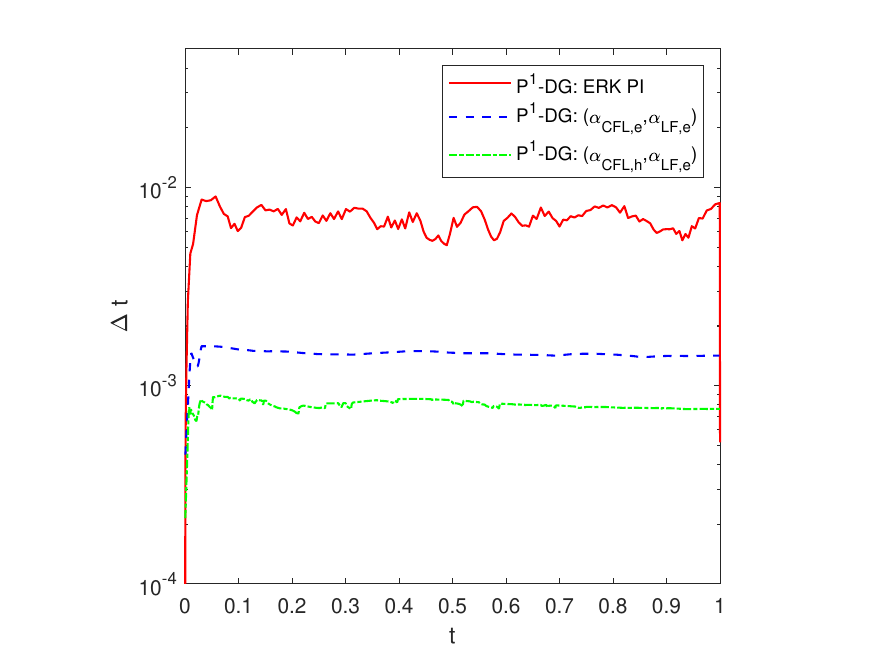}}
\subfigure[$P^2$-DG]{
\includegraphics[width=0.31\textwidth,trim=25 0 40 10,clip]{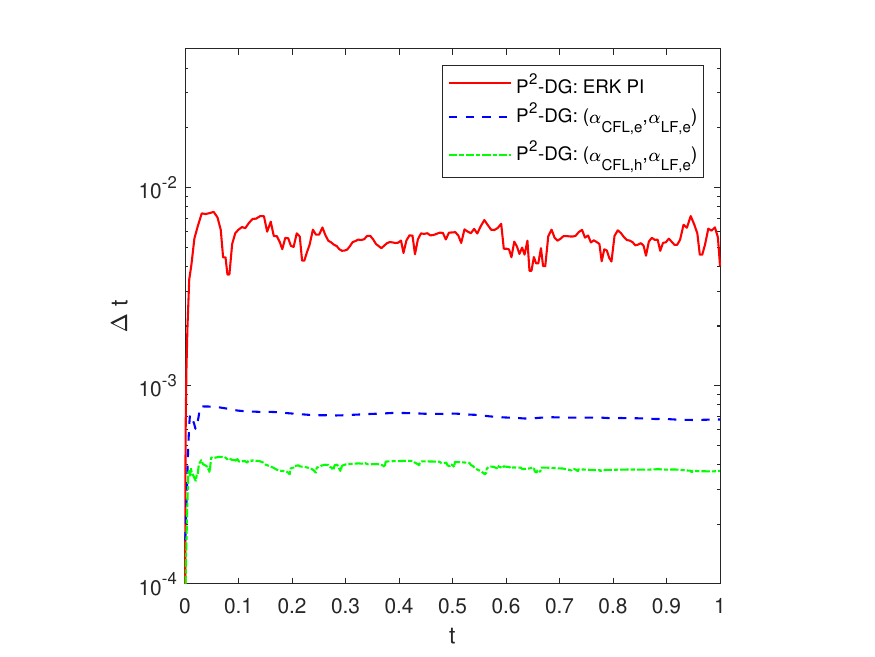}}
\subfigure[$P^3$-DG]{
\includegraphics[width=0.31\textwidth,trim=25 0 40 10,clip]{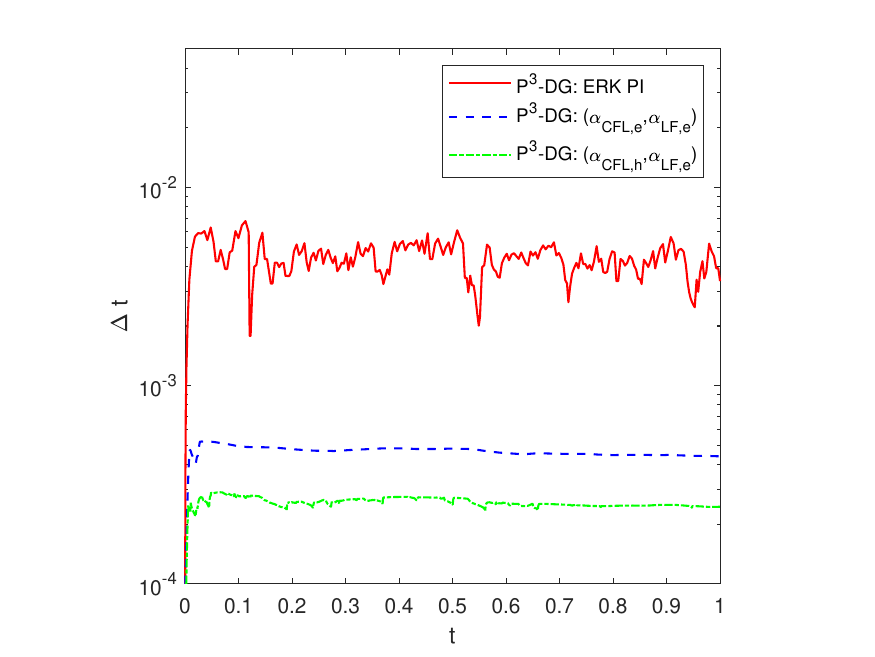}}
\caption{Example \ref{Shu-2d}. The time step-size $\Delta t$ results from with three selection strategies,
ERK PI, CFL ($\alpha_{CFL,e}$, $\alpha_{LF,e}$), and CFL ($\alpha_{CFL,h}$, $\alpha_{LF,e}$).
The $P^1$, $P^2$, and $P^3$-DG method with a moving mesh of $N=50\times50\times4$ is used.}
\label{Fig:Shu-2d-dt}
\end{figure}

\section{Conclusions}
\label{SEC:conclusions}

In the previous sections we have studied the stability of a DG solution of conservation laws on
adaptive moving meshes. Particularly, we have obtained a CFL condition (\ref{cfl-2}) for
moving mesh $P^0$-DG for linear scalar conservation laws. The condition shows that
the allowed maximum time step depends on the coupling between $\alpha$ on any edge
and the element height associated with the edge, where $\alpha$ (cf. \ref{alpha-p}) is the maximum
absolute value of the eigenvalue of the Jacobian matrix of the flux $(\bm{F} - U \dot{\bm{X}}) \cdot \bm{n}$ with respect to $U$ and the element height is the distance between the edge and the vertex opposite to the edge.
The condition justifies a known intuition that time step can be increased if the mesh velocity is chosen to minimize $(\bm{F} - U \dot{\bm{X}})$. On the other hand, if $\alpha$ does not change significantly
over the domain, it is expected that the allowed maximum time step is determined mostly
by the minimum element height of the mesh.

The stability analysis also shows that different choices of $\alpha$ can be used in the LF numerical flux and the CFL condition but a relation (\ref{alpha-0}) should be satisfied for $L^1$ stability.
Two common choices for $\alpha$ are $\alpha_e$ defined in (\ref{alpha-e}) and $\alpha_h$ defined in (\ref{alpha-h}).

Based on (\ref{cfl-2}) and CFL conditions for DG method on fixed meshes, we have proposed to choose the time step according to (\ref{cfl-5}) (with the same or a different choice of $\alpha$) for moving mesh $P^k$-DG ($k=1$, 2, 3) for general conservation laws.
This condition reduces to the one on fixed meshes when $\alpha$ is taken as $\alpha_h$.

Numerical examples have been presented with mesh adaptation by the MMPDE method for Burgers' and Euler equations in one and two dimensions.
Numerical results show that (\ref{cfl-5}) with $\alpha_{CFL} = \alpha_e$ or $\alpha_{CFL} = \alpha_h$ lead to stable computation.
Moreover, ($\alpha_{CFL,e}$, $\alpha_{LF,e}$) typically gives larger $\Delta t$ than ($\alpha_{CFL,h}$, $\alpha_{LF,e}$)
and ($\alpha_{CFL,h}$, $\alpha_{LF,h}$). All but one example also show that $\Delta t$
with ($\alpha_{CFL,e}$, $\alpha_{LF,e}$) has smaller oscillations.

In this work, we have also studied an error-based time step-size selection strategy with
the explicit SSP embedded Runge-Kutta pairs for DG computation of hyperbolic conservation laws on adaptive moving meshes.
Numerical examples show that the error-based strategy can lead to stable computation and result in larger time step-size
especially for higher-order DG than the CFL based selection strategy.

It should be pointed out that we have not considered moving mesh strategies such as Lagrangian-type methods to minimize
$\alpha$ and increase $\Delta t$ in the current work. Moreover, the error-based time step-size selection strategy seems
to result in larger time step-size than CFL condition. These are interesting topics that may deserve more studies in near future.

\vspace{20pt}

\noindent
{\bf Acknowledgment.} M. Zhang was partially supported by the Postdoctoral Science Foundation of China under grant 2022M710229 and
J. Qiu was partially supported by National Natural Science Foundation of China under grant 12071392.
The authors thank the anonymous referees for their valuable comments and suggestions that helped improve the quality
of the paper.


\end{document}